\newcommand{\paren}[1]{\left( #1 \right)}
\newcommand{\brac}[1]{\left[ #1 \right]}
\newcommand{\abs}[1]{\left\vert#1\right\vert}
\newcommand{\set}[1]{\left\{#1\right\}}
\newcommand{\R}{\mathbb{R}}
\newcommand{\new}{}
\newcommand{\proj}{\mathrm{proj}}
\newtheorem{theorem}{Theorem}
\newtheorem{lemma}[theorem]{Lemma}
\newtheorem{definition}[theorem]{Definition}
\newtheorem{proposition}[theorem]{Proposition}
\newtheorem{corollary}[theorem]{Corollary}
\newcommand{\T}{\mathbb T}
\newcommand{\Z}{\mathbb Z}
\newcommand{\dist}{\mathrm{dist}}
\begin{document}

\title{Voronoi Percolation: Topological Stability and Giant Cycles}
\author{Benjamin Schweinhart}
\email{bschwei@gmu.edu}
\address{Department of Mathematical Sciences, George Mason University, Fairfax, VA 22030, USA}
\author{Morgan Shuman}
\email{mshuman4@gmu.edu}
\address{Department of Mathematical Sciences, George Mason University, Fairfax, VA 22030, USA}

\begin{abstract}
We study the topological stability of Voronoi percolation in higher dimensions. We show that slightly increasing $p$ allows a discretization that preserves increasing topological properties with high probability. This strengthens a theorem of Bollobás and Riordan and generalizes it to higher dimensions. As a consequence, we prove a sharp phase transition for the emergence of $i$-dimensional giant cycles in Voronoi percolation on the $2i$-dimensional torus.
\end{abstract}

\maketitle

\begin{figure}[h]
    \centering
    \includegraphics[width=0.4\linewidth]{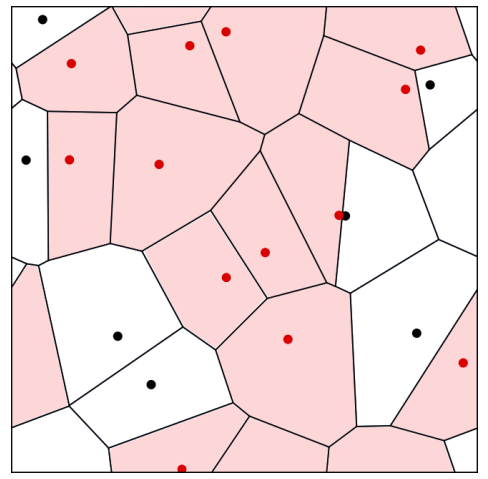}
    \caption{\label{figure:VoronoiPercolation} Voronoi percolation at $p = 0.5,$ shown with periodic boundary conditions.}
    \label{fig:enter-label}
\end{figure}

\section{Introduction}
\noindent The Poisson--Voronoi mosaic on $\mathbb{R}^d$ is the random polyhedral complex obtained by taking the Voronoi diagram of a Poisson process on $\mathbb{R}^d$ with constant intensity. Voronoi percolation is the random subcomplex of the mosaic where each Voronoi cell is included independently with probability $p.$ A seminal paper by Bollobás and Riordan shows that the percolation threshold of Voronoi percolation on $\mathbb{R}^2$ is $1/2$ \cite{Bollob_s_2005}. We prove a higher-dimensional, finite-volume analogue of this statement for Voronoi percolation on a torus: the $i$-dimensional homological percolation threshold on the $2i$-dimensional torus is $1/2.$ Roughly speaking, homological percolation occurs if there is an $i$-dimensional giant cycle spanning the torus. When $i=1,$ a giant cycle is a periodic path. Homological percolation was introduced by Bobrowski and Skraba~\cite{bobrowski2020homological,bobrowski2022homological} and further studied by Duncan, Kahle, and Schweinhart~\cite{duncan2025homological}.

Our proof strategy combines the ideas of~\cite{Bollob_s_2005} and ~\cite{duncan2025homological}: we construct a discretization of Voronoi percolation at a carefully chosen scale, prove a sharp phase transition for a ``stable'' homological percolation event in the discrete model, and show it coincides with the transition for the original model. A key technical lemma of~\cite{Bollob_s_2005} posits (roughly) that any paths lost in the discretization process can be made up by slightly increasing $p$. We strengthen this result by considering a broader class of topological conditions and generalizing to arbitrary dimensions. As a bonus, we obtain a shorter, more readable proof by using results from the literature on Delaunay triangulations \cite{boissonnat2013stability, bonnet2018maximaldegreepoissondelaunaygraph}. 


\begin{figure}[ht]
    \centering
    \includegraphics[width=0.5\linewidth]{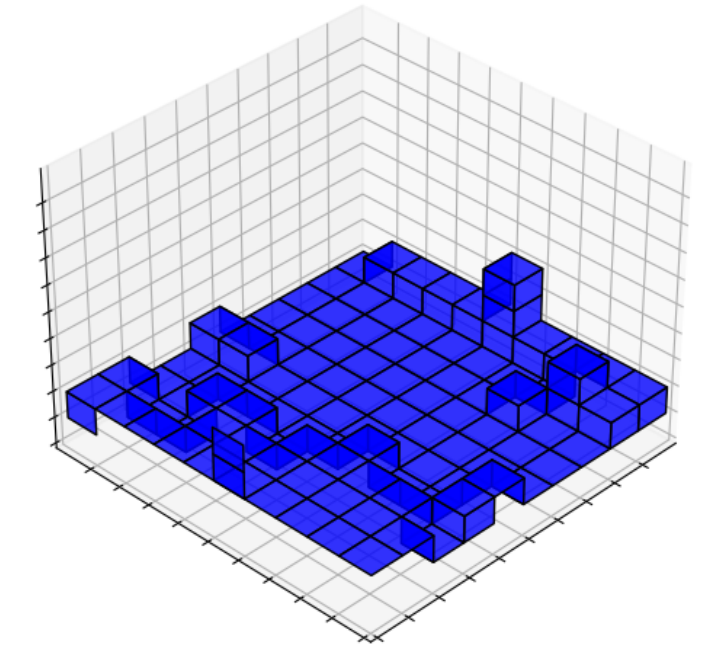}
    \caption{A giant cycle in $2$-dimensional plaquette percolation. Reproduced from \cite{duncan2025homological} with permission.}
    \label{fig:giant2cycle}
\end{figure}


We examine two homological events: the existence of an $i$-dimensional spanning surface, called a \textbf{giant $i$-cycle}; and the presence of a basis of (non-homologous) giant $i$-cycles. We denote these events $A$ and $S$, respectively. Figure~\ref{fig:AS} shows a percolation where $A$ occurs but not $S.$ To illustrate the significance of $S$, consider $A$ and $S$ for giant $1$-cycles on a $2$-dimensional torus. If there is only a single periodic path, say from bottom-to-top, then $A$ occurs but $S$ does not. If there are two non-homologous periodic paths, say one from left-to-right and one from top-to-bottom, then both $A$ and $S$ occur. 
For discrete percolation processes, it isn't difficult to show that the events $A$ and $S$ have sharp threshold functions. However, it is challenging to prove that they coincide for the two events and that they do not depend on the size of the torus. Duncan, Kahle, and Schweinhart~\cite{duncan2025homological} investigated homological percolation in models of plaquette percolation and permutohedral site percolation. They demonstrated the existence of sharp phase transitions for $A$ and $S$ for $i$-dimensional homological percolation on a $2i$-dimensional torus (for which the threshold is $1/2$) and $(d-1)$-dimensional homological percolation on the $d$-dimensional torus (where the threshold is dual to that of classical percolation).  We show the former statement for $i$-dimensional Voronoi percolation on the $2i$-dimensional torus. Our proof strategy combines the ideas of~\cite{Bollob_s_2005} and ~\cite{duncan2025homological}: we construct a discretization of Voronoi percolation at a carefully chosen scale, prove a sharp phase transition for a ``stable'' homological percolation event in the discrete model, and show it coincides with the transition for the original model. A key technical lemma of~\cite{Bollob_s_2005} posits (roughly) that any paths lost in the discretization process can be made up  by slightly increasing $p$. We strengthen this result by considering a broader class of topological conditions and generalizing to arbitrary dimensions. As a bonus, we obtain a shorter, more readable proof by using results from the literature on Delaunay triangulations \cite{boissonnat2013stability, bonnet2018maximaldegreepoissondelaunaygraph}. 

\begin{figure}[t]
    \centering
    \includegraphics[width=0.5\linewidth]{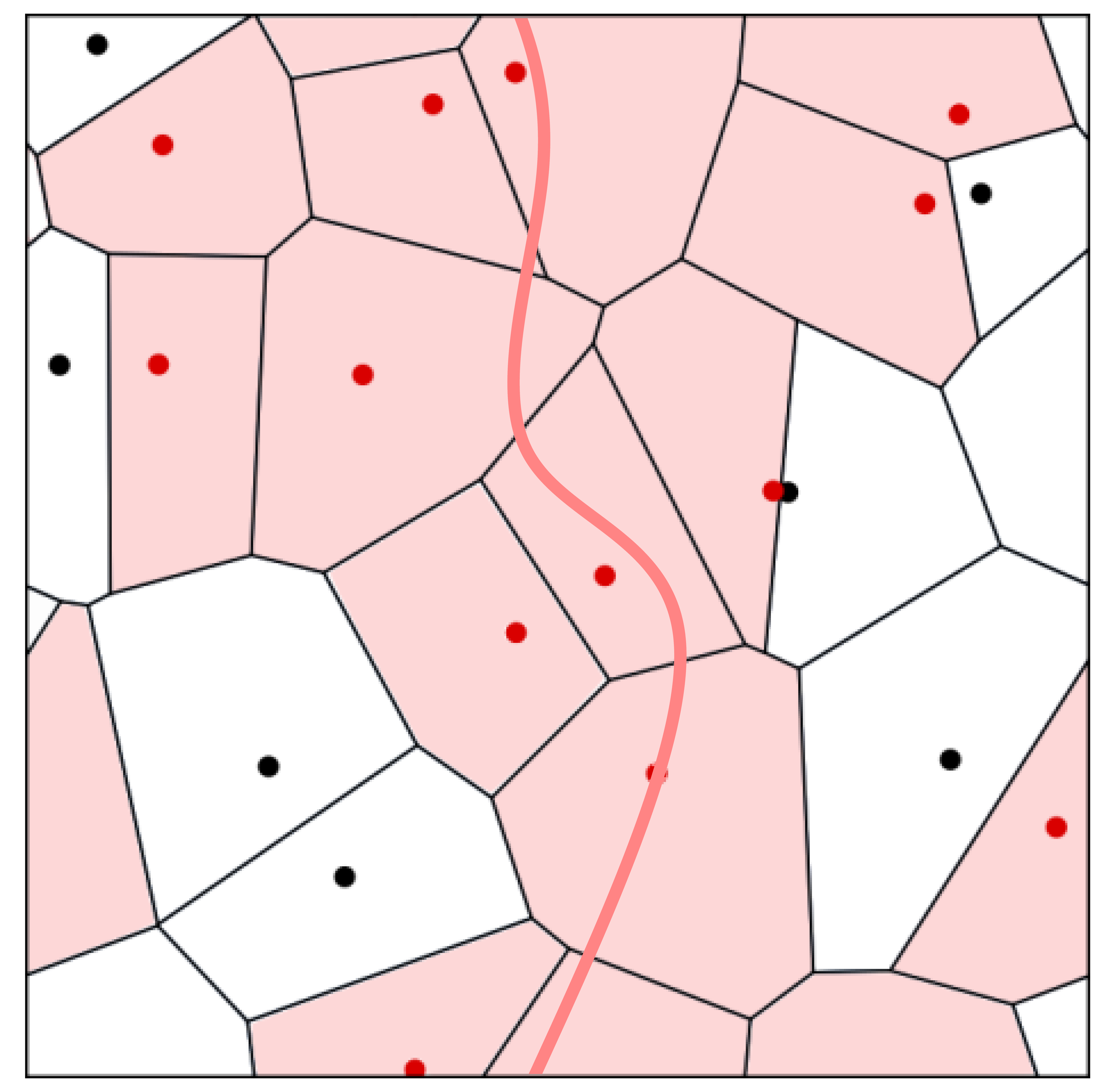}
    \caption{A percolation where $A$ occurs, but not $S$. Both the red and white Voronoi cells contains a giant $1$-cycle. The giant $1$-cycle in the red cells is highlighted.}
    \label{fig:AS}
\end{figure}

We quickly introduce the necessary notation to state our results and revisit them later. Let $\mathbb{T}^d_N$ be the cube $[-N,N]^d$ with opposite faces identified. Define $Z$ to be a Poisson point process of intensity $1$ on $\T^d_N$; color the points of $Z$ red independently with probability $p$, and color the remaining points white. Denote by $R=R(p)$ the set of red points of $Z$ and denote the Voronoi percolation as the pair $(Z,R)$. For $W\subset Z$, let $U(W,Z)$ be the union of of the Voronoi cells generated by the points of $W$ in $Z$. 

Roughly speaking, a point set $W \subset Z$ is \textbf{$\delta$-good} if the topology of the embedding of $U\paren{W,Z}$ into the torus is invariant to $\delta$-perturbations, where a $\delta$-perturbation is a bijection so that $\|z - \zeta(z)\| < \delta$ for all $z \in Z$. More precisely, we require that every $\delta$-perturbation $\zeta : Z \to Z'$ induces a homeomorphism of the torus $\bar{\zeta} : \mathbb{T}^d \to \mathbb{T}^d$ so that
\[\bar{\zeta}(U(W,Z)) = U(\zeta(W), \zeta(Z)).\] This amounts to saying that every topological property of $U\paren{W,Z}$ and its embedding in the torus is preserved under $\delta$-perturbations. For example, $U\paren{W,Z}$ contains a periodic path if and only if $U\paren{\zeta\paren{W},Z'}$ does. \new{The set of red points in Figure~\ref{fig:instability} is not $\delta$-good the presence of a giant cycle is unstable to a $\delta$-perturbation.} Given this vocabulary, we can now state our main technical result.

 \begin{theorem}\label{thm:main_technical}
 Let $\epsilon_0 > 0$ and set $\delta_0 = N^{-\epsilon_0}$. Then there exists a coupling of $\paren{Z_1, R_1(p)}$ with $\paren{Z_2, R_2(p + \epsilon)}$ so that there is a $\delta_0$-good subset $W\subset Z_2$ so that $W\subset R_2$ and $$U(R_1, Z_1)\subset U(W, Z_2)\subset U(R_2, Z_2)$$
 with high probability. 
\end{theorem}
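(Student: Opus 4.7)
The plan is to couple $Z_1 = Z_2 = Z$ as a single Poisson point process of intensity one on $\T^d_N$ and build $W$ as $R_1$ together with a geometrically determined stabilization set $B^+$ that absorbs the Delaunay-unstable regions. The approach uses Delaunay stability results to control $B^+$, a Palm/Slivnyak--Mecke calculation to control its size, and a tailored coupling to enforce $B^+ \subset R_2$.

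I would first import the Delaunay stability framework of~\cite{boissonnat2013stability}: a Delaunay simplex $\sigma$ is \emph{$\delta_0$-protected} if every Poisson point outside $\sigma$ lies at distance at least $c\delta_0$ from its empty circumsphere and $\sigma$ satisfies a quantitative thickness bound. A point $z \in Z$ is \emph{$\delta_0$-stable} if every Delaunay simplex at $z$ is $\delta_0$-protected. When $z$ and each of its Voronoi neighbors are $\delta_0$-stable, any $\delta_0$-perturbation $\zeta$ of $Z$ induces a combinatorial, incidence-preserving bijection between the Delaunay simplices at $z$ and at $\zeta(z)$; patching these local combinatorial equivalences across the torus yields the global homeomorphism $\bar\zeta : \T^d \to \T^d$ required in the definition of $\delta_0$-goodness. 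So the condition on $W$ reduces to requiring that every point of $W$, together with its Voronoi neighbors, is $\delta_0$-stable.

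Next, a sparsity bound on the bad set $B \subset Z$ of $\delta_0$-unstable points. By the Slivnyak--Mecke formula, the probability that a typical Poisson point is $\delta_0$-bad is $O(\delta_0^\alpha)$ for some $\alpha = \alpha(d) > 0$: protection of a given Delaunay simplex fails only when an external point lies in a shell of volume $O(\delta_0)$ around a circumsphere, and the maximal-degree bound of~\cite{bonnet2018maximaldegreepoissondelaunaygraph} controls the number of such simplices per point to union-bound over. Writing $B^+$ for the union of $B$ with the Voronoi neighbors of its points, one gets $\mathbb{E}|B^+| = O(N^{d-\alpha\epsilon_0})$, and a concentration argument upgrades this to $|B^+| = o(\epsilon N^d)$ with high probability.

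Setting $W = R_1 \cup B^+$, the containment $U(R_1, Z) \subset U(W, Z)$ is automatic, and every boundary facet of $U(W, Z)$ separates $\delta_0$-stable points, so $W$ is $\delta_0$-good. The main obstacle is the final containment $W \subset R_2$: because $B^+$ is determined by $Z$ alone, the standard monotone coupling places each bad point in $R_2$ only with probability $p+\epsilon$, making $B^+ \subset R_2$ vanishingly unlikely when $|B^+|$ is large. I would resolve this by replacing the monotone coupling with one that biases the $R_2$-coloring of $B^+$ toward red, using the extra density $\epsilon$ of $R_2 \setminus R_1$ as an absorption budget; because $|B^+| = o(\epsilon N^d)$, the required bias is tiny, and a further coupling between this biased law and the genuine $(p{+}\epsilon)$-Bernoulli law agrees with high probability. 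Formalizing this coupling, and rigorously verifying that vertex-level Delaunay stability implies the global homeomorphism condition, constitutes the main technical work of the proof.
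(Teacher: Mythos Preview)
Your Delaunay-stability and sparsity steps are in the right spirit and roughly match what the paper does in Sections~\ref{section:badasymptotics} and~\ref{section:voronoitopology}. The gap is in the coupling step, and it is a genuine obstruction, not a formalization issue.

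With $Z_1 = Z_2 = Z$, the containment $U(R_1,Z) \subset U(W,Z)$ forces $R_1 \subset W$, so you need $R_1 \cup B^+ \subset R_2$. But $B^+$ is a deterministic function of $Z$, and conditional on $Z$ the law of $R_2$ is i.i.d.\ Bernoulli$(p+\epsilon)$. Hence $\mathbb{P}(B^+ \subset R_2 \mid Z) = (p+\epsilon)^{|B^+|}$ is \emph{fixed by the marginal}, and no coupling can improve it. Since $|B^+|$ grows polynomially in $N$, this probability is exponentially small. The ``absorption budget'' heuristic fails: the $\approx \epsilon N^d$ extra red points in $R_2\setminus R_1$ land at i.i.d.\ random locations, and you cannot steer them to the specific set $B^+$ while keeping $R_2$ genuinely Bernoulli$(p+\epsilon)$. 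Your biased law $\tilde R_2$ (with $B^+$ forced red) sits at total variation distance $1-(p+\epsilon)^{|B^+|} \to 1$ from the true law, so the second coupling you propose cannot agree with high probability. Equivalently: knowing $Z$, you know exactly which points are bad; in true Voronoi percolation those points are white with probability $1-p-\epsilon$ each, and there are polynomially many of them.

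The paper's coupling avoids this by \emph{not} taking $Z_1 = Z_2$. It couples $Z_1$ and $Z_2$ through a common discrete \emph{allocation scheme} on boxes of side $\approx \delta^{1/5}$, so that there is an induced $\delta^5$-perturbation $\zeta:Z_1\to Z_2$, and then works cluster-by-cluster on the potentially $\delta'$-bad clusters $C$ (each of size $o(\log N)$). The key estimate is local: the event $B(C)$ that some realization of $C$ has a $\delta$-bad point has probability $O(\delta^{c})$, while the event $G(C)$ that every point of $C$ lands in $R_2\setminus R_1$ has probability $\epsilon^{|C|} \geq N^{-\eta|\log\epsilon|}$; choosing $\eta$ small makes $\mathbb{P}(G(C)) \geq 2\,\mathbb{P}(B(C))$. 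This lets one build a measure-preserving swap $f_C$ on each cluster that exchanges $B(C)$ with a subset $G'(C)\subset G(C)$, thereby \emph{moving the points of $Z_2$} (not just recoloring) so that whenever $Z_1$ is bad on $C$, $Z_2$ is both $\delta$-good and entirely red there. The per-cluster independence then lets these swaps be glued into a global coupling. The two ingredients you are missing are (i) allowing $Z_2$ to differ from $Z_1$ by a small perturbation, and (ii) the cluster-local probability comparison that makes a measure-preserving swap possible; neither can be recovered within a pure recoloring scheme on a fixed point set.
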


\begin{figure}[t]
    \centering
    \includegraphics[width=0.75\linewidth]{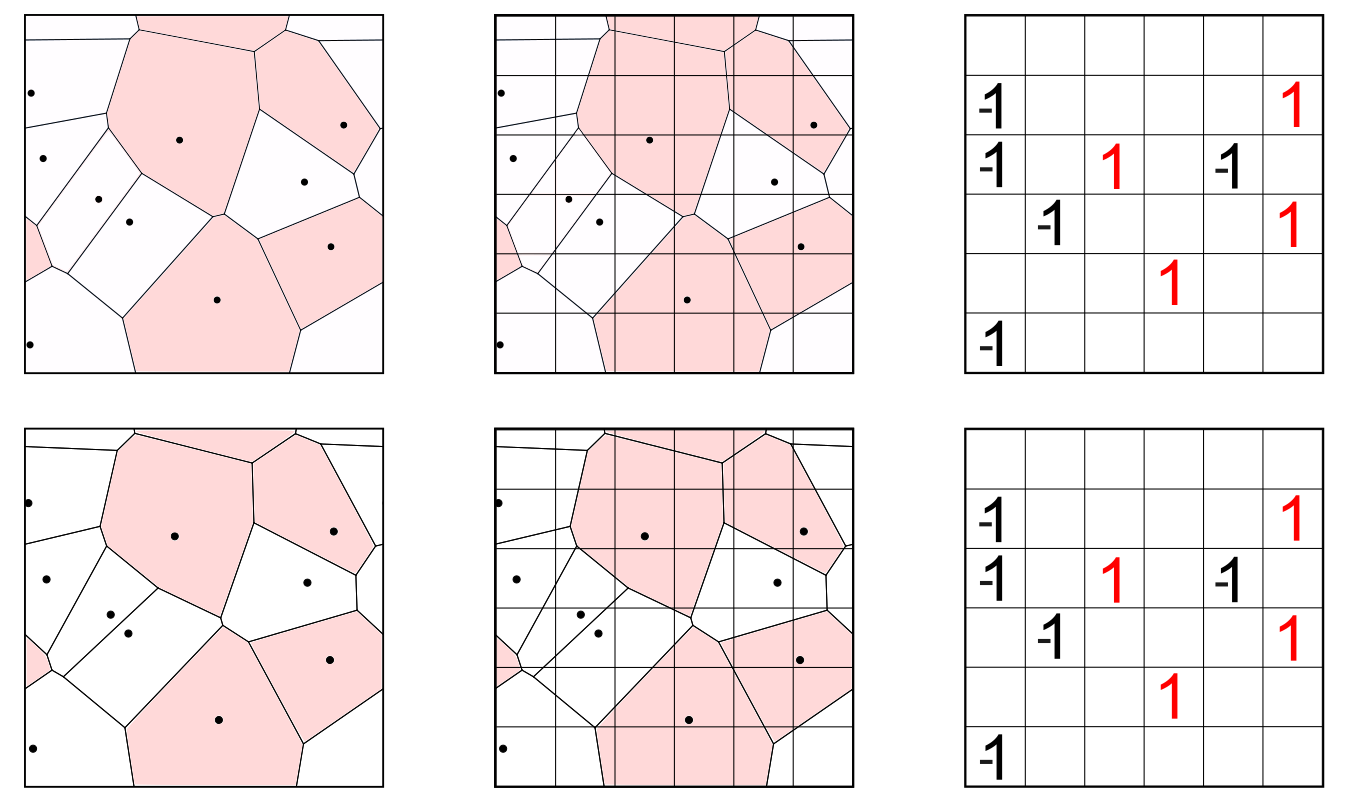}
    \caption{An illustration of Voronoi instability in the two torus. Two Voronoi percolations are shown on the left that are slight perturbations of each other. They share the same discretization, shown on the right, yet only one of them has a giant cycle. 
    }
    \label{fig:instability}
\end{figure}

This allows us to approximate Voronoi percolation with a discrete process that retains the increasing topological properties of $R$ with high probability. In tandem with the sharp threshold theorem of Friedgut and Kalai generalized by Bollobás and Riordan, Theorem \ref{theorem:friedgutkalai}, we demonstrate and determine the sharp threshold for $i$-dimensional homological percolation on a $2i$-dimensional torus. 

Quickly recall that homology depends on an abelian group of coefficients, which we take to be a field $\mathbb{F}$. We will review homology in Section~\ref{section:homologicalpreliminaries} below.
For technical reasons involving representation theory we only consider coefficients from fields with characteristic not equal to $2$. A full explanation is provided in~\cite{duncan2025homological}, but the idea is that if, say, $d=2,$ $\mathbb{F}$ has characteristic $2$, and $A'$ is the event that there is a periodic path which crosses the torus once left-to-right and once bottom-to-top, then event $S$ cannot be ``spun up'' from $A'$ by taking the intersection events symmetric to $A'.$

\begin{theorem}\label{thm:homologicaltransition}
    Suppose $\mathrm{char}(\mathbb{F}) \not= 2$. If $d = 2i$ then
    \[\begin{cases}
        \mathbb{P}_p(A) \rightarrow 0 & p < \frac{1}{2} \\
        \mathbb{P}_p(S) \rightarrow 1 & p > \frac{1}{2}
    \end{cases}\]
    as $N \rightarrow \infty$.
\end{theorem}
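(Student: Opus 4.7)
The plan is to transfer the sharp-threshold analysis of~\cite{duncan2025homological} from plaquette percolation to Voronoi percolation via Theorem~\ref{thm:main_technical}. The strategy has three steps: reduce to a discrete Bernoulli model; derive a lower bound on the threshold from self-duality; and upgrade event $A$ to event $S$ using the representation-theoretic ``spinning-up'' argument.

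For the first step I would fix a small $\epsilon>0$ and apply Theorem~\ref{thm:main_technical} to sandwich $U(R,Z)$ between copies of $U(W,Z_2)$ at densities $p\pm\epsilon$. Conditioning on the underlying Poisson process $Z_2$, the event that $U(W,Z_2)$ carries a giant $i$-cycle (respectively a basis of giant $i$-cycles) is an increasing Boolean function of the $\mathrm{Bernoulli}(q)$ coloring of the points of $Z_2$: because $W$ is $\delta_0$-good, this event depends only on the coloring together with the combinatorial Delaunay structure. Torus translations act ergodically on $Z_2$ and hence, conditionally, transitively on the Boolean coordinates, so Theorem~\ref{theorem:friedgutkalai} delivers sharp thresholds $p_c^A(N) \leq p_c^S(N)$ with transition windows of width $o(1)$, both for the discretized model and (via Theorem~\ref{thm:main_technical}) for the Voronoi events themselves.

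For the lower bound I would note that the white region is itself a Voronoi percolation at parameter $1-p$, and that in dimension $d=2i$ the intersection pairing on $H_i(\mathbb{T}^{2i};\mathbb{F})$ is perfect. Thus a basis of giant $i$-cycles in the red region is incompatible with one in the white region, giving $\mathbb{P}_p(S)+\mathbb{P}_{1-p}(S)\leq 1$. This forces $p_c^S(N)\geq 1/2$ and, combined with the sharp threshold, yields $\mathbb{P}_p(A)\to 0$ for $p<1/2$; together with sharpness for $A$, it also gives $\mathbb{P}_p(A)\to 1$ for $p>1/2$.

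To upgrade $A$ to $S$ when $p>1/2$ I would follow the argument of~\cite{duncan2025homological}. The hyperoctahedral symmetry group $\Gamma$ of $\mathbb{T}^{2i}$ acts on $H_i(\mathbb{T}^{2i};\mathbb{F})$, and because $\mathrm{char}(\mathbb{F})\neq 2$ every nonzero $\Gamma$-invariant subspace of $H_i$ equals $H_i$ itself. Applying the FKG inequality to the intersection of $A$ with all of its $\Gamma$-translates then produces, with high probability, a collection of giant $i$-cycles whose $\mathbb{F}$-span is $\Gamma$-invariant and nonzero, hence equals all of $H_i$---which is precisely event $S$. I expect the main obstacle to be in the first step, where the Voronoi event must be presented as a translation-symmetric Boolean function in a way that makes Friedgut--Kalai applicable: because the coordinates of the discretization are the random points of $Z_2$ rather than a fixed lattice, one must condition on $Z_2$, verify that the torus symmetry group acts almost surely transitively on the coordinates, and check that the resulting coordinate count (of order $N^d$) does not spoil the Friedgut--Kalai bounds. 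Once this bookkeeping is settled, the duality and spinning-up steps proceed by direct analogy with~\cite{duncan2025homological}.
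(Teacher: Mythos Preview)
Your Step 1 contains the gap you flag, and it cannot be resolved as you propose. Conditioning on $Z_2$ and treating the colouring as a Bernoulli product on its points yields no usable symmetry: almost surely a fixed Poisson realization admits no nontrivial torus translation mapping it to itself, so the transitivity hypothesis of Theorem~\ref{theorem:friedgutkalai} fails outright. The paper's remedy is to discretize not to the random points but to a \emph{deterministic} grid of cubes of side $\delta=N^{-\epsilon}$ (the ``coarse state'' of Bollob\'as--Riordan), assigning each cube a value in $\{-1,0,1\}$ according to whether it contains a white point, no point, or only red points. The grid-translation subgroup acts transitively on these deterministically many coordinates, the law is invariant, and the $\{-1,0,1\}$-valued Friedgut--Kalai theorem applies directly. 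Theorem~\ref{thm:main_technical} then enters not as a sandwich between two copies of $U(W,Z_2)$ but as the inequality $\mathbb{P}_p(A)\le\mathbb{P}_{p+\epsilon}(A_{\mathrm{stable}})+o(1)$, where $A_{\mathrm{stable}}$ is the coarse-state-measurable event ``$A$ holds for every Voronoi percolation with this coarse state.''

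Your Step 2 uses only half of the duality, and in the order that does not bootstrap. From $\mathbb{P}_p(S)+\mathbb{P}_{1-p}(S)\le 1$ you obtain $p_c^S\ge 1/2$, but since $S\subseteq A$ this gives no upper bound on $p_c^A$, so $\mathbb{P}_p(A)\to 1$ for $p>1/2$ does not follow from sharpness alone. The paper instead starts from the complementary consequence of Lemma~\ref{lemma:DualityLemma}: because $\mathrm{rank}\,\phi_i+\mathrm{rank}\,\psi_{d-i}=\dim H_i(\mathbb{T}^d)$, event $A$ must occur in the primal or in the dual, whence $\mathbb{P}_{1/2}(A)\ge 1/2$. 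This seeds Friedgut--Kalai on the coarse state to give $\mathbb{P}_p(A_{\mathrm{stable}})\to 1$ for $p>1/2$; your Step 3 (applied to the discrete events, which is where the required positive association and symmetry actually live) then gives $\mathbb{P}_p(S)\to 1$; and only afterward does duality---$S$ in the dual blocks $A$ in the primal---deliver $\mathbb{P}_p(A)\to 0$ for $p<1/2$.
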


\section{Background}
\subsection{Delaunay and Voronoi}
Let $Z$ be a \textbf{Poisson point process}. For $z \in Z$, the \textbf{Voronoi cell} of $z$, $V(z)$, is
\[V(z) = \{x \in \mathbb{T}_N^d : d(x,z) \leq d(x, z'), \forall z' \in Z\}.\]
The collection of Voronoi cells forms a \textbf{polyhedral complex} called a \textbf{Poisson-Voronoi mosaic} that we denote $V(Z)$ and is depicted in Figure~\ref{fig:mosaic}. Note that a Voronoi tesselation on the torus may fail to be a polyhedral complex if the cells are large relative to the diameter of the ambient space. This happens with probability $o(1)$ in the asymptotic regimes we consider. We give a definition of a polyhedral complex below, borrowed from Definition 2.38 in \cite{kozlov2008combinatorial}.

\begin{definition}[Polytope and Polyhedral complex]
    A (convex) \textbf{$n$-polytope} is an $n$-dimensional convex hull of a finite set of points. A \textbf{polyhedral complex} is a collection $X$ of convex polytopes in $\mathbb{R}^n$ such that
    \begin{enumerate}
        \item If $\sigma \in X$, then all faces of $\sigma$ are in $X$.
        \item If $\sigma_1, \sigma_2 \in X$ and $\sigma_1 \cap \sigma_2 \not= \varnothing$, then $\sigma_1 \cap \sigma_2 \in X$.
    \end{enumerate}
\end{definition}

A set of points in a $d$-dimensional Euclidean space is in \textbf{general position} if no $\paren{d+2}$-points lie on on the same $(d-1)$-sphere. We will assume this hypothesis as it occurs with probability one for Poisson point processes. It implies that the intersection of $k$ Voronoi cells is either empty or a $\paren{d-k+1}$-face of the Voronoi tesselation, and all $(d-k+1)$-faces occur as such an intersection (on the torus, we also need to assume that the Voronoi cells are small relative to the diameter of the ambient space). In particular, the intersection of $(d+2)$ Voronoi cells is necessarily empty. In three dimensions, two Voronoi cells can intersect in a $2$-dimensional face, three Voronoi cells can intersect in a $1$-dimensional edge, and four Voronoi cells can intersect in a $0$-dimensional vertex. Note that the points of the integer lattice $\Z^d$ are not in general position. The Voronoi diagram of $\Z^2$ consists of unit squares, some of which meet only in a single vertex. This causes topological complications: the complement of a union of these squares is not necessarily topologically equivalent to the union of complementary squares. On the other hand, the complement of a set of Voronoi cells in a Poisson point process is homeomorphic to the union of complementary cells with probability one.

\begin{figure}[t]
    \centering
    \includegraphics[width=0.5\linewidth]{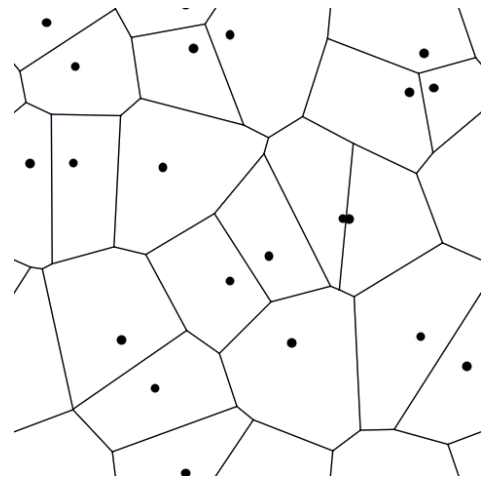}
    \caption{A Poisson-Voronoi mosaic}
    \label{fig:mosaic}
\end{figure}
 The general position hypothesis also allows the definition of a dual cell complex, called the Delaunay triangulation. Before discussing this in more detail, we define a simplicial complex and its terminology.

\begin{definition}[Simplex and simplicial complex]
    An \textbf{$n$-simplex} is the convex hull of $(n+1)$-vertices that are affinely independent. A \textbf{simplicial complex} $\mathcal{K}$ is a collection of simplices such that
    \begin{enumerate}
        \item For all $\sigma \in \mathcal{K}$, every face of $\sigma$ is in $\mathcal{K}$.
        \item For any $\sigma_1, \sigma_2 \in \mathcal{K}$, $\sigma_1 \cap \sigma_2$ is a face of $\sigma_1, \sigma_2$.
    \end{enumerate}
\end{definition}

The Delaunay complex is a natural simplicial complex generated by a point set $Z \subset X$ and a metric on $X$, $\mathrm{dist}$. (In our case, we take $X = \mathbb{T}^d_N$ and $\mathrm{dist}$ as the Euclidean metric.)

\begin{definition}[Delaunay ball]
    A \textbf{Delaunay ball}, or a \textbf{maximal empty ball}, is a ball $B \subset X$ such that $|B\cap Z | = 0$, $|\partial B \cap Z| \geq d +1$.
\end{definition}

If the points are in general position then the Delaunay complex is a simplicial complex called the \textbf{Delaunay triangulation}.


\begin{definition}[Delaunay triangulation]
    If $Z$ is in general position, then its \textbf{Delaunay triangulation}, $\mathrm{Del}(Z)$, is
    \[\mathrm{Del}(Z) := \{\mathrm{conv}\{z_1,\dots,z_{d+1}\}\mid z_i\in\partial B\; B\text{ is a Delaunay ball}\} \]
    where $\mathrm{conv}\{z_1, \dots, z_{d+1}\}$ denotes the convex hull.  Moreover, we define \linebreak $\mathrm{Del}(W; Z)$ as the subcomplex of $\mathrm{Del}(Z)$ consisting of simplices whose vertices are in $W.$ 
\end{definition}

\begin{figure}[h]
    \centering
    \includegraphics[width=0.4\linewidth]{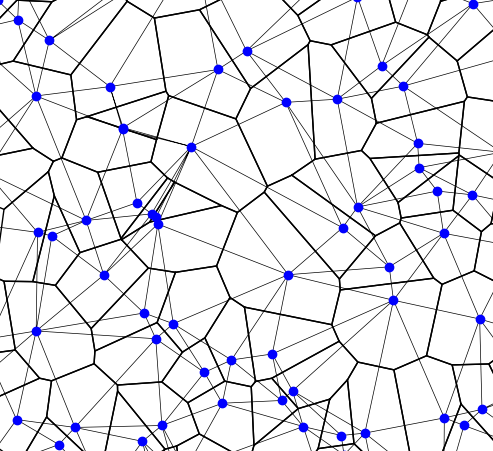}
    \caption{Voronoi diagram with Delaunay complex superimposed.}
    \label{fig:VoronoiDelaunay}
\end{figure}

The duality relationship between Delaunay and Voronoi is given by
\[\mathrm{Del}(Z) = \{\varnothing \not= \sigma \subset Z \, |  \, \bigcap_{u \in \sigma} V(u) \not= \varnothing\}.\]
That is, the Voronoi cells of $z_1,\ldots,z_k$ intersect non-trivially if and only if $\paren{z_1,\ldots,z_k}$ forms a $\paren{k-1}$ simplex of the Delaunay triangulation. For example, in Figure~\ref{fig:VoronoiDelaunay}, the vertices of the Delaunay triangulation are the points $Z,$ two vertices form a Delaunay edge if and only if their Voronoi cells intersect in a Voronoi edge, and three vertices form a Delaunay face if and only if their Voronoi cells intersect in a point. 

This is the higher dimensional generalization of graph duality, $i$-polytopes are dual to $(d-i)$-polytopes. 
More generally, the duality mapping of a polyhedral complex is given by the \textbf{Nerve} (definition borrowed from 10.3 of \cite{Edelsbrunner2014ASC}),
\[\mathrm{Nerve}(X) := \{\varnothing \not= \sigma \subseteq X \, | \, \bigcap \sigma \not= \varnothing \}.\]
One can observe that $\mathrm{Nerve}(V(Z)) = \mathrm{Del}(Z)$ and $\mathrm{Nerve}(\mathrm{Del}(Z)) = V(Z)$. 
Moreover, we introduced the duality relationship to show the following topological relation. 
\begin{lemma}\label{lemma:deformVorDel}
    Let $Z$ be in general position and let $P \subset Z$ finite. Then, $\mathrm{Del}(P ; Z)$ and $U(P ; Z)$ have the same homotopy type.
\end{lemma}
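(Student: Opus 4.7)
The plan is to invoke the Nerve theorem of Borsuk: if $\mathcal{U} = \{U_\alpha\}$ is a cover of a paracompact space $X$ by closed sets such that every nonempty finite intersection $\bigcap_{\alpha \in I} U_\alpha$ is contractible, then the nerve of $\mathcal{U}$ is homotopy equivalent to $X$. The target statement is tailor-made for this machinery, since the Voronoi--Delaunay duality recalled in the excerpt identifies Delaunay simplices with nonempty intersections of Voronoi cells.

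The first step is to apply the Nerve theorem to the cover $\mathcal{U}_P = \paren{V(p)}_{p \in P}$ of $U(P;Z)$ by Voronoi cells. By the general-position assumption and the earlier remark that, with probability $1$ in the asymptotic regimes considered, Voronoi cells on $\T^d_N$ are small relative to the diameter of the torus, each cell $V(p)$ lifts to a convex polytope in the universal cover $\R^d$. Intersections of lifts are intersections of convex polytopes, hence either empty or convex; in particular nonempty finite intersections are contractible. Thus $\mathcal{U}_P$ is a good cover and the Nerve theorem yields $U(P;Z) \simeq \mathrm{Nerve}(\mathcal{U}_P)$.

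The second step is to identify $\mathrm{Nerve}(\mathcal{U}_P)$ with $\mathrm{Del}(P;Z)$. By the duality recalled in the excerpt, a subset $\{z_1,\dots,z_k\} \subset Z$ spans a simplex of $\mathrm{Del}(Z)$ precisely when $\bigcap_{i=1}^k V(z_i) \neq \varnothing$. Specializing to subsets $\sigma \subset P$ gives the definition of $\mathrm{Nerve}(\mathcal{U}_P)$ on one side and of $\mathrm{Del}(P;Z)$ on the other, so the two simplicial complexes coincide. Combining with the homotopy equivalence above proves the lemma.

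The main obstacle is handling the torus, since convexity is not intrinsically defined on $\T^d_N$ and a Voronoi cell that wraps around the torus need not be contractible. This is addressed by restricting to the high-probability event that all relevant Voronoi cells fit inside a convex fundamental domain, at which point the contractibility argument reduces to the standard Euclidean case. A less technical alternative would be to phrase the argument directly in terms of star-shapedness of Voronoi cells around their generating points, but lifting to the universal cover appears cleanest.
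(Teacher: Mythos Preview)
Your proposal is correct and takes essentially the same approach as the paper: both invoke the Nerve Theorem, with you spelling out the verification that the Voronoi cover is good (convex intersections via lifting to $\R^d$) and the identification $\mathrm{Nerve}(\mathcal{U}_P)=\mathrm{Del}(P;Z)$, while the paper simply cites the Nerve Theorem as an immediate corollary. Your caveat about cells wrapping the torus matches the paper's own remark that the Voronoi tessellation may fail to be a polyhedral complex on $\T^d_N$ outside a probability-$1-o(1)$ event.
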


\begin{proof}
    This is an immediate corollary of the Nerve Theorem in \cite{Edelsbrunner2014ASC}
\end{proof}

This lemma allows us to translate topological results on the Delaunay triangulation to the Voronoi setting. Lastly, we introduce the \textbf{star} of a simplex which is shown in Figure~\ref{fig:star}.

\begin{figure}
    \centering
    \includegraphics[width=0.5\linewidth]{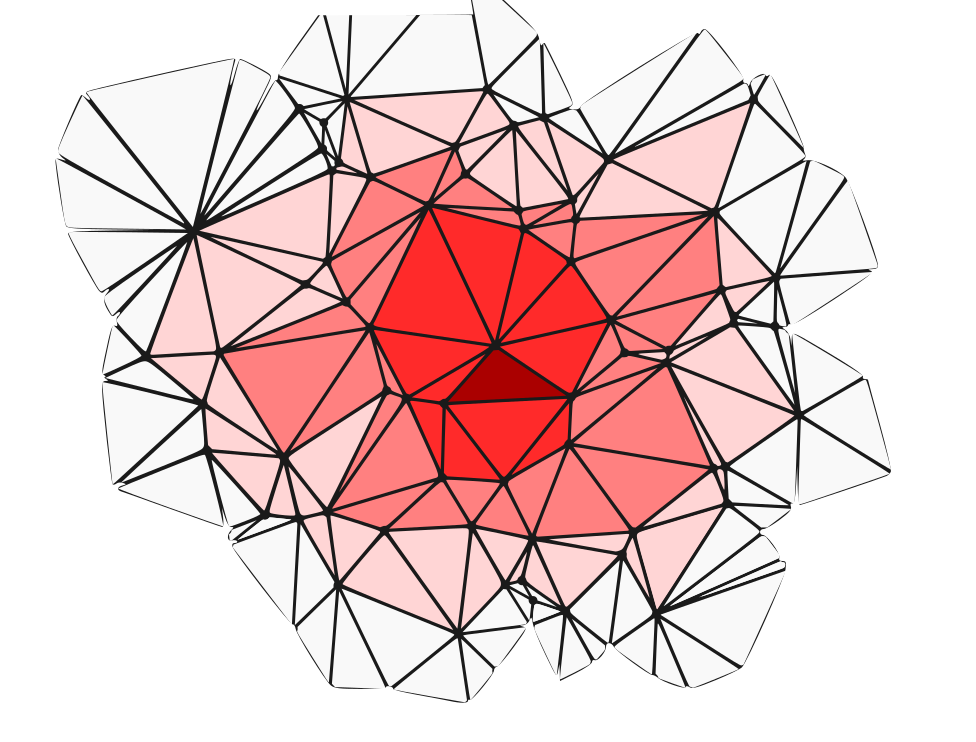}
    \caption{A simplex, its star, star's star, star's star's star, and star's star's star's star shown in progressively lighter colors of red.}
    \label{fig:star}
\end{figure}

\begin{definition}[Star]
The star of a vertex $w\in Z$ is the union of all simplices of $\mathrm{Del}(X)$ containing $w.$ More generally, for $W\subset Z,$  $\mathrm{Star}(W,X)$ is the union of the stars of the vertices of $W.$\end{definition}

\textbf{Notation note:} in this paper, we work solely with polyhedral complexes. More general than polyhedral complexes are cell complexes, which are built of $i$-cells which are homeomorphic to $i$-dimensional balls but are not required to be polytopes. We use terminology from this more general setting: we refer to the faces of the Voronoi tesselation as \textbf{cells}, and the set of $i$-cells as the  \textbf{$i$-skeleton}. 

\subsection{A crash course in homology}
Roughly speaking, homology measures the $i$-dimensional holes of a space. The homology of a topological space can be defined using singular homology, and the topology of a polyhedral complex can be defined using cellular homology \cite{hatcher2002algebraic}, but the definition is simpler and less technical for simplicial complexes. As such, we define homology for simplicial complexes $\mathcal{K}$ and use Lemma \ref{lemma:deformVorDel} to define it for unions of Voronoi cells.

\begin{definition}[$n$-chain]
    Let $C_n(\mathcal{K})$ be the space of formal $\mathbb{F}$-linear combinations of $n$-simplices of $\mathcal{K}$, that is
    \[C_n(\mathcal{K}) = C_n(\mathcal{K}, \mathbb{F}) := \{\sum c_\alpha \sigma_\alpha^n \, : \, c_n \in \mathbb{F}\}.\]
    $C_n(\mathcal{K})$ forms a vector space over $\mathbb{F}$.
\end{definition}

\textbf{The meaning of coefficients.} The simplest choice of coefficients is $\mathbb{F} = \mathbb{Z}_2$. The coefficients, $0, 1$, can be viewed as excluding or including a face, so $C_n(\mathcal{K};\Z_q)$ can be formally identified with the power set of the collection of $n$-faces. A different choice of coefficients is $\mathbb{F} = \mathbb{Z}_3$. The coefficients, $0, 1, 2$, can be viewed as excluding $c_\alpha = 0$ a face or including $c_\alpha = 1,2$ face with orientation (say up or down). As we discuss briefly below, the ranks of the homology groups depend on the coefficients in general.  

We will discuss the meaning of the coefficients after defining homology. First, we introduce the boundary homomorphism.

\begin{definition}[Boundary]
    The \textbf{boundary homomorphism}, $\partial_n : C_n(\mathcal{K}) \to C_{n -1}(\mathcal{K})$ is given by
    \[\partial \sigma^k = \sum_{i=0}^{n} (-1)^i \sigma^k |[v_0, \dots, \hat{v_i}, \dots, v_n].\]
    The term $[v_0, \dots, \hat{v_i}, \dots, v_n]$ indicates the simplex generated by $v_0, \dots, v_n$ excluding $v_i$. Moreover, $B_n(X) := \mathrm{Im} \, \partial_{n+1}$ are the \textbf{$n$-boundaries} and $Z_n(X) := \mathrm{Ker} \, \partial_{n}$ are the \textbf{$n$-cycles}. 
\end{definition}

The boundary map satisfies the fundamental relation $\partial_n \circ \partial_{n - 1} = 0$. E.g, $B_{n}(X) \subset Z_n(X)$. If there exists an $n$-cycle that is not the boundary of an $(n+1)$-chain, e.g, $Z_n(X) \subsetneq B_n(X)$, then we say there exists an $n$-dimensional hole. This is formalized with homology.

\begin{definition}[Homology]
    The $n$th homology is defined as
    \[H_n(\mathcal{K}) = Z_n(X)/B_n(X).\]
\end{definition}

\textbf{Example: }Consider two adjacent triangles as shown in \ref{fig:homologyEx}. (In this example, triangle will refer to the edges that make up a triangle, ignoring the face if one exists).

\begin{figure}[h]
    \centering
    \includegraphics[width=0.5\linewidth]{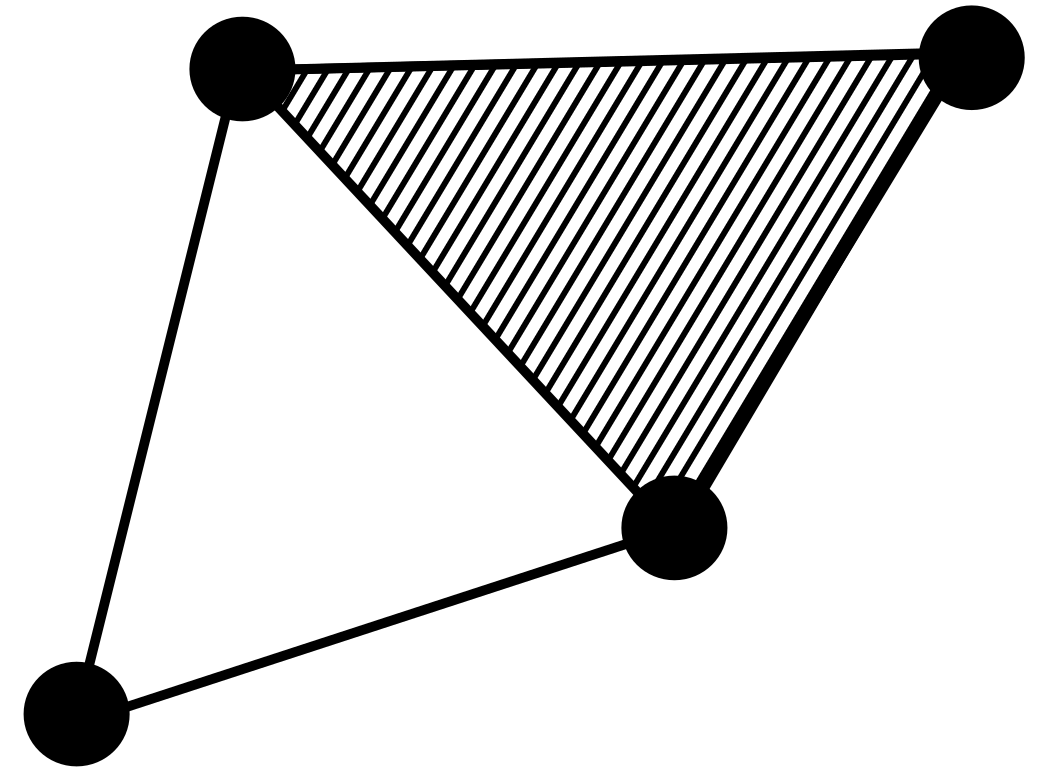}
    \caption{Two adjacent triangles. One is filled in and the other is not.}
    \label{fig:homologyEx}
\end{figure}

There are $4$ vertices, $5$ edges, and $1$ face.
\[C_0 \cong \mathbb{F}^4, \quad C_1 \cong \mathbb{F}^5, \quad C_2 \cong \mathbb{F}^1, \quad C_n \cong 0 \quad \forall n \geq 3.\]
The spaces of $1$-chains and $0$-chains are
\begin{align*}
    Z_1 &= \{\text{Linear combinations of the triangles}\} \cong \mathbb{F}^2, \\
    Z_0 &= \{\text{Linear combinations of the vertices}\} \cong \mathbb{F}^{4}.
\end{align*}
Moreover, the spaces of $1$-boundaries and $0$-boundaries are
\begin{align*}
    B_1 &= \{\text{Linear combinations of the triangle on the right}\} \cong \mathbb{F}, \\
    B_0 &= \{\text{Linear combinations of boundaries of edges}\} \cong \mathbb{F}^3.
\end{align*}
So, our homology groups are:
\begin{align*}
    H_1 &= Z_1 / B_1 \cong \mathbb{F} \\
    H_0 &= Z_0 / B_0 \cong \mathbb{F}.
\end{align*}
The non-triviality of $H_1$ reflects the existence of a hole in simplicial complex. Moreover, $\dim(H_0) = 1$ reflects that the complex is a single connected component.

\textbf{More on coefficients: }When $i=0$ or when $\mathcal{K}$ is embeddable $\R^2$ or $\R^3$ then the rank of the homology does not depend on the choice of coefficients and counts the number of connected components. The higher homology groups depend on the choice of coefficients in general. For example, any non-orientable surface $S$ such as the Klein bottle or real projective plane then $H_2\paren{S;\Z_2}\cong \Z_2$ but $H_2\paren{S;\Z_q}=0$ for odd primes $q$: the $2$-simplices cannot be oriented so that their boundaries cancel out. 

A continuous function $f:X\to Y$ induces a linear transformation on homology $f_*:H_n(X)\to H_n(Y).$ This is particularly easy to define for an inclusion map $\iota:X\hookrightarrow Y$ when $X\subset Y.$ A cycle in $X$ is automatically a cycle in $Y$ and $B_n(X)$ is a subspace of $B_n(Y),$ so $\iota_*$ sends a coset  $z + B_n(X)$ to the potentially larger coset $z + B_n(Y).$

A key result is that homology is a homotopy invariant.
\begin{proposition}[Corollary 2.11 of \cite{hatcher2002algebraic}]
    The maps $f_* : H_n(X) \to H_n(Y)$ induced by a homotopy equivalence $f : X \to Y$ are isomorphisms for all $n$.
\end{proposition}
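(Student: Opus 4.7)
The plan is to deduce the proposition from two ingredients: functoriality of the induced maps on homology (that $(g\circ f)_* = g_* \circ f_*$ and $(\mathrm{id}_X)_* = \mathrm{id}_{H_n(X)}$), and homotopy invariance at the level of induced maps, meaning $f \simeq g$ implies $f_* = g_*$. Granting these, if $f : X \to Y$ is a homotopy equivalence with homotopy inverse $h : Y \to X$, then $h \circ f \simeq \mathrm{id}_X$ and $f \circ h \simeq \mathrm{id}_Y$, so $h_* \circ f_* = \mathrm{id}_{H_n(X)}$ and $f_* \circ h_* = \mathrm{id}_{H_n(Y)}$. Thus $f_*$ is an isomorphism with inverse $h_*$, which is the conclusion.

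Functoriality is routine: the induced chain map $f_{\#} : C_n(X) \to C_n(Y)$ sends a singular simplex $\sigma : \Delta^n \to X$ to $f \circ \sigma$, and this construction manifestly respects composition and identities. Checking that $f_{\#}$ is a chain map (that is, $\partial f_{\#} = f_{\#} \partial$) follows directly from the formula for the boundary operator. Passing to quotients by cycles modulo boundaries then gives the required map $f_* : H_n(X) \to H_n(Y)$.

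The substantive step is homotopy invariance. Given a homotopy $H : X \times [0,1] \to Y$ with $H_0 = f$ and $H_1 = g$, I would construct a chain homotopy, the \emph{prism operator} $P : C_n(X) \to C_{n+1}(Y)$, satisfying the identity
\[\partial P + P\partial = g_{\#} - f_{\#}.\]
The idea is to subdivide the prism $\Delta^n \times [0,1]$ into $(n+1)$-simplices using the vertices $v_0,\dots,v_n$ of $\Delta^n \times \{0\}$ and $w_0,\dots,w_n$ of $\Delta^n \times \{1\}$, taking the simplices $[v_0,\dots,v_i,w_i,\dots,w_n]$ for $i=0,\dots,n$. Then, for a singular $n$-simplex $\sigma : \Delta^n \to X$, define
\[P(\sigma) = \sum_{i=0}^{n}(-1)^i\, H \circ (\sigma \times \mathrm{id}_{[0,1]}) \circ [v_0,\dots,v_i,w_i,\dots,w_n].\]
Once the chain homotopy identity holds, any cycle $\alpha \in Z_n(X)$ satisfies $g_{\#}(\alpha)-f_{\#}(\alpha) = \partial P(\alpha)$, so $[f_{\#}(\alpha)] = [g_{\#}(\alpha)]$ in $H_n(Y)$ and hence $f_* = g_*$.

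The main obstacle is verifying the chain homotopy identity $\partial P + P\partial = g_{\#} - f_{\#}$. This is a combinatorial bookkeeping exercise: one expands $\partial P(\sigma)$ by removing each vertex from each prism simplex, expands $P(\partial\sigma)$ by inserting a prism on each boundary face, and checks that almost all terms cancel in pairs, leaving only the top simplex $[w_0,\dots,w_n]$ (giving $g_{\#}\sigma$) and the bottom simplex $[v_0,\dots,v_n]$ (giving $-f_{\#}\sigma$). Once this identity is established, the rest of the argument is formal.
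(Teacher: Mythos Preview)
Your proof is correct and is precisely the standard prism-operator argument from Hatcher. The paper itself provides no proof at all: the statement is simply cited as Corollary~2.11 of \cite{hatcher2002algebraic}, so there is nothing to compare beyond noting that you have reproduced the argument from the cited source.
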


\begin{corollary}
    Let $P = U(Q, Z)$ and let $I = \mathrm{Del}(Q; Z)$. Then,
    $H_n(P) \cong H_n(I)$.
\end{corollary}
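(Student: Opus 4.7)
The plan is to chain together Lemma \ref{lemma:deformVorDel} with the homotopy invariance of homology quoted just above. First, I would verify that the hypotheses of the lemma are in place: the underlying Poisson point process $Z$ lies in general position almost surely, and $Q \subset Z$ is a finite subset, so the lemma applies and produces a homotopy equivalence between $P = U(Q, Z)$ and $I = \mathrm{Del}(Q; Z)$. The content of that lemma itself rests on the Nerve Theorem applied to the identification $\mathrm{Nerve}(V(Z)) = \mathrm{Del}(Z)$; it goes through because Voronoi cells and their nonempty intersections are convex, hence contractible, so the hypotheses of the Nerve Theorem are met.

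Next I would invoke the cited proposition (Hatcher, Corollary 2.11): a homotopy equivalence $f : X \to Y$ induces an isomorphism $f_* : H_n(X) \to H_n(Y)$ for every $n$. Specializing to $X = P$ and $Y = I$ gives the desired conclusion $H_n(P) \cong H_n(I)$, with the isomorphism realized by the map induced by the homotopy equivalence from the lemma.

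No real obstacle is expected: the corollary is essentially a bookkeeping consequence of the two preceding results, and I would write it as two short sentences. The one subtlety worth flagging, which is presumably needed later in the paper, is that the homotopy equivalence provided by the Nerve Theorem is natural with respect to inclusions $Q \subset Q' \subset Z$, so that the induced isomorphisms on homology intertwine the inclusion-induced maps on the Voronoi side with those on the Delaunay side. This naturality is not required for the bare statement but will matter when one wants to transfer information about giant cycles and their persistence back and forth between $U(W,Z)$ and $\mathrm{Del}(W;Z)$.
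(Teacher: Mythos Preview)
Your proposal is correct and matches the paper's proof essentially verbatim: invoke Lemma~\ref{lemma:deformVorDel} for the homotopy equivalence, then apply the homotopy invariance of homology (the cited Hatcher result) to conclude. Your additional remark about naturality with respect to inclusions is not part of the paper's proof of this corollary, but it is a sensible observation for later use.
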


\begin{proof}
    By Lemma \ref{lemma:deformVorDel}, $I$ is homotopy equivalent to $P$. By the previous corollary, the rest follows.
\end{proof}

\subsection{Voronoi Percolation}
\textbf{Voronoi percolation} is defined by including the Voronoi cells of a Poisson point process $Z$ independently with probability $p.$ More formally, the underlying probability space it is a pair $\paren{Z,R}$ where we include each point of $Z$ in $R$ independently with probability $p.$ See Figure~\ref{figure:VoronoiPercolation}.

In \cite{Bollob_s_2005}, Bollobás and Riordan proved for $d = 2$, the critical probability is $1/2$. A key technical lemma is the following coupling result, adapted to our terminology

\begin{theorem}[Bollobás and Riordan, 2005]
    Let $(Z_1,R_1)$ be a Voronoi percolation on $\mathbb{T}_N^2$. For $\epsilon > 0$ and $\delta \leq N^{-\epsilon}$, there exists a coupling of $(Z,R(p))$ with $(Z_2,R_2(p+\epsilon))$ such that, for every path $\gamma$ in $U(R_1,Z_1)$ there exists a path $\gamma'$ in $U(R_2, Z_2)$ and every point of $\gamma'$ is within distance $(\log{N})^2$ of some point of $\gamma$ and vice-versa so that every point $4\delta$-close to $\gamma'$ is red.
\end{theorem}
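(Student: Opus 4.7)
The plan is to build a monotone mark coupling of $R_1(p)$ and $R_2(p+\epsilon)$ on a shared Poisson process $Z_1 = Z_2 = Z$, and then locally deform $\gamma$ around every ``pinch'' at which a cell of $Z \setminus R_2$ comes within $4\delta$ of the path. Concretely, attach an independent uniform mark $U_z \in [0,1]$ to each point of $Z$ and set $R_i = \{z \in Z : U_z \leq p_i\}$ for $p_1 = p$ and $p_2 = p+\epsilon$. Then $R_1 \subset R_2$ almost surely, and conditionally on $R_1$ each $z \in Z \setminus R_1$ belongs independently to $R_2 \setminus R_1$ with probability $\epsilon/(1-p)$. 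This ``extra red mass'' is an independent Bernoulli sprinkling of the white cells of the $p$-percolation, and it will be used to repair pinches of $\gamma$.

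Next I would record standard Poisson regularity that holds with probability $1 - o(1)$: every Voronoi cell has diameter at most $C \log N$, the minimum inter-point distance exceeds $N^{-2}$, and in every ball of radius $\sqrt{\delta}$ the number of points is $O(\log N)$. These follow from Chernoff bounds and a union bound over a fine grid, using that $\delta \geq N^{-\epsilon}$ is only polynomially small in $N$. Then cover $\gamma$ by overlapping mesoscopic boxes of side $L = \log N$. Inside each box the arc of $\gamma$ visits $O(\log N)$ cells, producing $O(\log N)$ candidate interfaces where the $4\delta$-tube of $\gamma$ could exit $U(R_2, Z)$. For each such interface I would reroute $\gamma$ either through the interior of the adjacent red cell -- feasible when the cell inradius exceeds $4\delta$, which is the typical case since $\delta \le N^{-\epsilon}$ while inradii are $\Theta(1)$ -- or through a cell of $R_2 \setminus R_1$ supplied by the sprinkling. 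Each local detour has length $O(\log N)$, so concatenating the unmodified pieces of $\gamma$ with the detours yields a $\gamma'$ whose Hausdorff distance from $\gamma$ is at most $(\log N)^2$, whose interior lies in $U(R_2, Z)$, and whose $4\delta$-tube avoids every cell of $Z \setminus R_2$.

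The main obstacle is controlling the local-detour step with probability high enough to survive a union bound over the $O(N^2 / \log^2 N)$ mesoscopic boxes. A renormalization argument seems natural: declare a box \emph{good} if, for every pair of $R_1$-red cells whose shared white Delaunay neighbor threatens the $4\delta$-tube, at least one cell in the Delaunay $2$-neighborhood of that white cell lies in $R_2 \setminus R_1$. Each threatening configuration has independent probability $\epsilon/(1-p)$ of being repaired, and with polylog-many configurations per box the chance that a box is bad is super-polynomially small in $N$, comfortably enough for the union bound. This is the step that most closely mirrors the original argument of Bollobás and Riordan; once every traversed box is good, the remaining bookkeeping (parameterizing the detours, checking the $4\delta$-tube condition, and verifying the two Hausdorff inequalities) is deterministic and follows from the box-scale geometry.
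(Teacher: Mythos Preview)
Your probabilistic accounting in the ``renormalization'' step is fatally miscalibrated. Sprinkling turns each white cell red with probability $\epsilon/(1-p)$, which is \emph{small}, not close to $1$. So the probability that a given threatening configuration is repaired is $\approx \epsilon$, and the probability that it is \emph{not} repaired is $\approx 1$. With polylog-many pinches per box and $O(N^2/\log^2 N)$ boxes, the expected number of unrepaired pinches is of order $N^2$, not $o(1)$; the claim that a box is bad with super-polynomially small probability simply does not follow from the ingredients you list. The inequality you need runs the other way.

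The actual Bollob\'as--Riordan argument (and its generalization in this paper) rests on a different comparison. One first isolates those configurations for which no geometric rerouting inside the existing red region is possible: these are the \emph{$\delta$-bad clusters}, arising from $\delta$-close pairs and $\delta$-unstable $(d{+}2)$-tuples. Such clusters are shown to be rare (probability $O(\delta^{c})=O(N^{-c\epsilon})$ per cluster) and small ($o(\log N)$ points each). The key inequality is then
\[
\mathbb{P}(\text{every point of the cluster is red under }p+\epsilon)\;=\;\epsilon^{|C|}\;\ge\;2\,\mathbb{P}(\text{cluster is geometrically bad}),
\]
which holds because $\epsilon^{o(\log N)}$ beats $N^{-c\epsilon}$. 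This permits a measure-preserving bijection on each cluster that swaps ``bad geometry'' realizations for ``all-red'' realizations. Crucially, the coupling does \emph{not} keep $Z_1=Z_2$: it uses an allocation scheme (small boxes of side $\approx\delta^{1/5}$) and \emph{moves} the unstable points within their boxes, so that the swapped configuration has no $\delta$-bad geometry at all. Your proposal misses both the rarity-of-bad-geometry step and the point-moving mechanism; the sprinkling alone cannot carry the weight you assign to it.
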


Theorem \ref{thm:main_technical} generalizes this statement to $d$-dimensions and strengthens it to control all increasing topological properties.


\subsection{Homological Percolation and Discretization}\label{section:homologicalpreliminaries}
Let $P \subset \T_N^d$. The inclusion map $P \hookrightarrow \mathbb{T}_N^d$ induces a map on the homology $\phi_i : H_i(P, \mathbb{F}) \to H_i(\mathbb{T}^d_N, \mathbb{F})$. We define our homological percolation events below.

\begin{definition}
    Let $A := \{\mathrm{rank} \, \phi_i > 0\}, \quad S := \{\mathrm{nullity} \, \phi_i = 0\}.$
\end{definition}

Our coupling result, Theorem \ref{thm:main_technical}, states that slightly increasing the probability and discretizing the Voronoi percolation preserves increasing events invariant under homeomorphisms of $\mathbb{T}^d$, including $A$ and $S$. With the coupling in mind, we define our formal discretization here, which is the same as the crude state discretization constructed in \cite{Bollob_s_2005}.

\begin{definition}
    Let $\delta = N^{-\epsilon}$. We partition $\mathbb{T}^d$ into $\lceil{(N/\delta)^d}\rceil$ cubes of length $\delta$. Each cube is assigned a value in $\{-1,0,1\}$,
    \begin{enumerate}
        \item $-1$ if it contains a white point 
        \item $0$ if it contains no points 
        \item $1$ if it contains only red points.
    \end{enumerate}
    The assignment of $(Z, R)$ is called the \textbf{coarse state}, denoted $\mathrm{Coarse}_\delta(Z,R).$
\end{definition}
Later, we study the probability space of the coarse state to apply Theorem \ref{theorem:friedgutkalai}. To make events well-defined on the coarse state, we introduce the notion of a \textbf{stable event}.

\begin{definition}[Stable event]
    For any event $X$ define
    \[X_{\text{stable}} := \{V \, | \, \text{$X$ occurs on any $V'$ with the same coarse state as $V$}\}.\]
\end{definition}

Events on percolation processes are usually defined in terms of the geometry of the percolation process itself. Such examples include: there exists an infinite open component containing the origin, there exists $k$ disjoint open paths from the origin to the boundary of a box of length $n$ (called a $k$-arm event), and so on. A stable event is different, instead it is an invariance under the placement of points into their respective cubes. As $X_{\mathrm{stable}} \subset X$, we obtain the useful inequality 
\[\mathbb{P}_p(X_\mathrm{stable}) \leq \mathbb{P}_p(X).\]

\subsection{Probabilistic tools}
Foundational to our proof is the sharp threshold theorem of Friedgut and Kalai \cite{Friedgut1996EveryMG}, stated in a form generalized by Bollobás and Riordan \cite{Bollob_s_2005} to a probability space over $\{-1, 0, 1\}^n$ (rather than $\{0,1\}^n$).

\begin{theorem}[Bollobás and Riordan]\label{theorem:friedgutkalai}
    Let $X$ be an increasing event on \linebreak $\{-1,0,1\}^{n}$ with some probability measure $\mathbb{P}_{p_1,p_2}^n$ that is invariant under a transitive group action and
    \begin{equation}\label{equation:fkinequality1}
        0 < q_{1} < p_{1} < 1/e, \quad 0 < p_{2} < q_{2} < 1/e\,.
    \end{equation}
    If $\mathbb{P}_{p_1,p_{2}}^{n}(X) > \eta$, then $\mathbb{P}_{q_1,q_2}^n(X) > 1 - \eta$ whenever
    \begin{equation}\label{equation:fkinequality2}
        \min\{q_{2} - p_{2}, p_{1} - q_{1}\} \geq c \log{(1/\eta)} p_{max}\log{(1/p_{max})/\log{n}}
    \end{equation}
    where $p_{max} = \max\{p_1, q_2\}.$
\end{theorem}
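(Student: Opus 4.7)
The plan is to combine the discretization of Theorem \ref{thm:main_technical} with the sharp-threshold Theorem \ref{theorem:friedgutkalai} and a Poincar\'e--Lefschetz duality argument on $\T^{2i}$, following the template of \cite{duncan2025homological}. By Theorem \ref{thm:main_technical}, for every $\epsilon > 0$ I couple $(Z_1, R_1(p))$ to a $\delta_0$-good $W \subset R_2(p+\epsilon)$ with $U(R_1, Z_1) \subset U(W, Z_2) \subset U(R_2, Z_2)$ with high probability. Since $A$ and $S$ are monotone topological events and $\delta_0$-goodness transfers them across the coarse state, it suffices to prove the theorem for the stable events on $\set{-1, 0, 1}^n$ with $n \asymp (N/\delta)^{2i}$, absorbing the shift $\epsilon$ into the asymptotics. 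The coarse state admits a transitive action by the grid-preserving translations of $\T_N^{2i}$, so Theorem \ref{theorem:friedgutkalai} produces sharp thresholds $p_A(N)$ and $p_S(N)$ with transition windows of width $O(1/\log N)$, and $S \subset A$ gives $p_A \leq p_S$.

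Next, I invoke Poincar\'e--Lefschetz duality. Let $V_{\mathrm{red}}$ denote the image of $H_i(U(R,Z); \mathbb{F}) \to H_i(\T^{2i}; \mathbb{F})$ and define $V_{\mathrm{white}}$ analogously from the white region. Cycles in the two regions can be perturbed to lie in disjoint neighborhoods, so their intersection pairings vanish and $V_{\mathrm{white}} \subseteq V_{\mathrm{red}}^{\perp}$; in particular, $V_{\mathrm{white}} = 0$ forces $V_{\mathrm{red}} = H_i(\T^{2i})$, i.e., $A^{\mathrm{white}}$ failing implies $S^{\mathrm{red}}$ holding. Combined with the color symmetry $\mathbb{P}_p(X^{\mathrm{red}}) = \mathbb{P}_{1-p}(X^{\mathrm{white}})$, this yields the inequality
\[\mathbb{P}_p(S^{\mathrm{red}}) + \mathbb{P}_{1-p}(A^{\mathrm{red}}) \geq 1.\]

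To match $p_A$ to $p_S$ and pin them to $1/2$, I apply the spin-up argument from \cite{duncan2025homological}. For each primitive $\alpha \in H_i(\T^{2i}; \mathbb{F})$, the increasing event $A_\alpha := \set{\alpha \in V_{\mathrm{red}}}$ is permuted by the $\mathrm{GL}_{2i}(\Z)$-action on the torus, so all $A_\alpha$ share a common sharp threshold $p_*$. For finite $\mathbb{F}$ the projectively primitive classes form a finite set whose cardinality is independent of $N$, so the polynomial Friedgut--Kalai tails combined with a union bound give $\mathbb{P}(A) \to 0$ at $p < p_*$ and $\mathbb{P}(S) \to 1$ at $p > p_*$, hence $p_A = p_S = p_*$. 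The duality inequality and color symmetry then rule out any $p_* \neq 1/2$: were $p_* < 1/2$, color symmetry and the sharp threshold for $A$ would give $\mathbb{P}_{p_*}(A^{\mathrm{white}}) \to 1$, forcing $\mathbb{P}_{p_*}(S) \to 0$ via the duality inequality and contradicting $p_S = p_*$; the case $p_* > 1/2$ is symmetric.

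The main obstacle is this final step. The hypothesis $\mathrm{char}(\mathbb{F}) \neq 2$ enters essentially through the representation theory of $\mathrm{GL}_{2i}(\Z)$ on $H_i(\T^{2i}; \mathbb{F})$, which is needed to ensure that $S$ can be detected from symmetric combinations of the $A_\alpha$; passing from finite $\mathbb{F}$ to general characteristic-$\neq 2$ fields requires additional universal-coefficient care. The preceding steps are relatively direct applications of the cited tools.
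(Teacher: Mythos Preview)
You have proved the wrong theorem. The statement in question is Theorem~\ref{theorem:friedgutkalai}, the Bollob\'as--Riordan extension of the Friedgut--Kalai sharp-threshold theorem to product measures on $\{-1,0,1\}^n$. In the paper this is a cited background result (from \cite{Bollob_s_2005} and \cite{Friedgut1996EveryMG}); the paper does not supply a proof and does not claim one. A proof would proceed via influence bounds in the style of KKL/Talagrand adapted to the three-state product space, and would have nothing to do with Voronoi cells, duality on the torus, or homological percolation.

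What you have actually written is a sketch of Theorem~\ref{thm:homologicaltransition}, the $p=1/2$ homological transition on $\T^{2i}$. Your outline \emph{uses} Theorem~\ref{theorem:friedgutkalai} as a black box (``Theorem~\ref{theorem:friedgutkalai} produces sharp thresholds\ldots'') rather than establishing it. That sketch is broadly in line with Section~\ref{section:evendimension} of the paper, though the paper invokes the irreducibility of $H_i(\T^d;\mathbb{F})$ under the hyperoctahedral group $W_d$ and Corollary~\ref{corollary:SBound} to compare $S$ and $A$, rather than the $\mathrm{GL}_{2i}(\Z)$ action and the events $A_\alpha$ you describe. But none of this bears on the statement you were asked to prove.
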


\subsection{Paper outline}
The majority of the paper is focused on proving Theorem \ref{thm:main_technical}. We develop key results, Lemma \ref{lemma:clusterBound} and Theorem \ref{theorem:perturbwrap} in sections \ref{section:badasymptotics} and \ref{section:voronoitopology} respectively. Then, in section \ref{section:coupling}, we demonstrate Theorem \ref{thm:main_technical} by carefully constructing a coupling that yields an appropriate discretization of a Voronoi percolation with high probability. Lastly, in section \ref{section:evendimension}, with our discretization in hand and results from \cite{duncan2025homological}, we prove Theorem~\ref{thm:homologicaltransition} on homological percolation.

As mentioned, many of these arguments, especially in Sections \ref{section:badasymptotics}, \ref{section:coupling}, are modifications of Bollobás and Riordan's seminal work on Voronoi percolation.

\section{Asymptotics of bad points}\label{section:badasymptotics}
\subsection{Delaunay Triangulations}
Crucially, \cite{boissonnat2013stability} established several theorems on the stability of Delaunay triangulations under perturbations of both the point locations and the metric. These results imply corresponding stability theorems for Voronoi tessellations. We review the the terminology and results of \cite{boissonnat2013stability} and our main result of the section.

\new{In a nutshell, notions of instability for Delaunay triangulations indicate how close points are to not being in general position. That is, they measure how close a Delaunay ball is to a point in its exterior, how close a tuple of $(d+1)$ points is to a plane, or how close a pair of points are to each other. A Poisson point process is in general position with probability $1$; in this section we study the asymptotics of points that are ``nearly'' out of general position.

\begin{figure}[ht]
  \centering
  \begin{minipage}{0.4\textwidth}
    \centering
    \includegraphics[width=\textwidth]{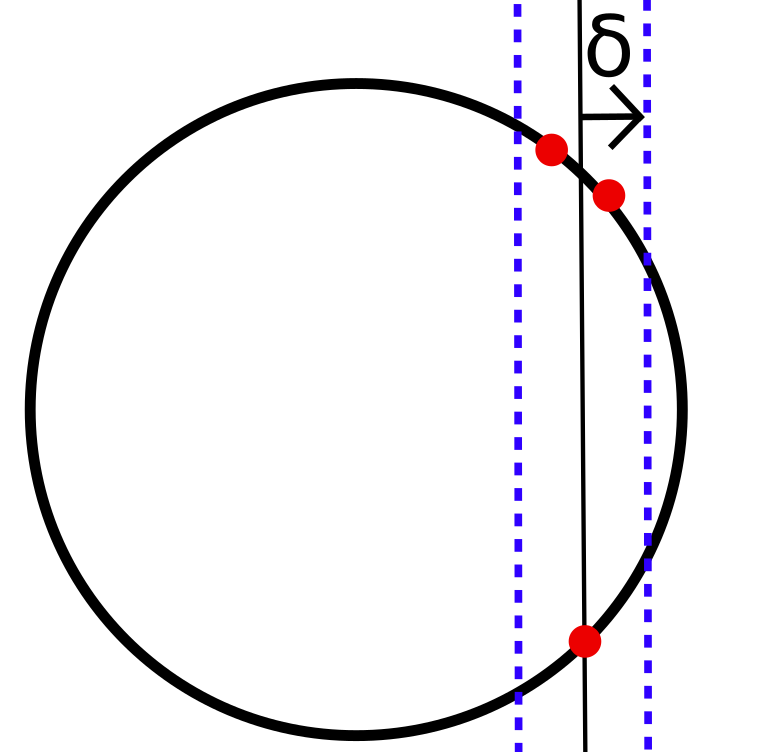}
  \end{minipage}\hfill 
  \begin{minipage}{0.5\textwidth}
    \centering
    \includegraphics[width=\textwidth]{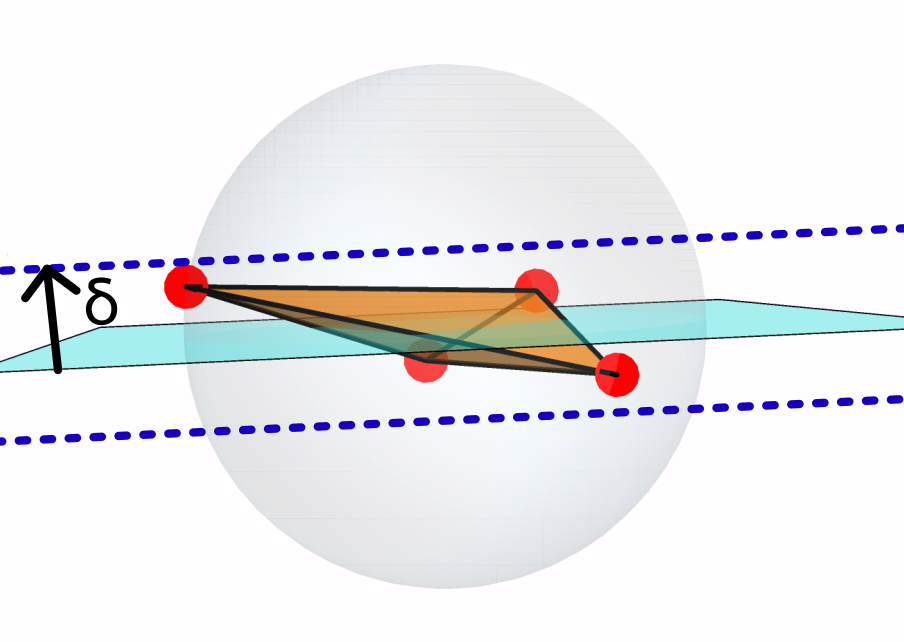}
  \end{minipage}
  \caption{A $\delta$-unstable $(1+2)$-tuple and a $\delta$-unstable $(2+2)$-tuple. On the left, a $(1+2)$-coplanar tuple forces a $\delta$-close pair. On the right, a $(2+2)$-coplanar tuple need not form a $\delta$-close pair.}
  \label{fig:coplanarity}
\end{figure}

These asymptotics for planar Poisson point processes were considered in \cite{Bollob_s_2005}. However, a new type of instability arises in higher dimensions.  In two dimensions, three nearly coplanar points \new{of the same simplex} must necessarily include a close pair. However, in higher dimensions, $(d+1)$ points can be approximately coplanar without the presence of a close pair. \new{See Figure~\ref{fig:coplanarity} for a comparison}. Our first definition is taken from \cite{boissonnat2013stability} and measures the sparseness of a point sample and the instability of a simplex.}
 
\begin{definition}[$R$-sample set]
    A point set $Z\subset \T^d_N$ is an \textbf{$R$-sample set} if the distance from any point $x \in \mathbb{T}^d_N$ to $Z$ is at most $R$.
\end{definition}


\begin{definition}[$\delta$-unstable simplex]
    Let $\sigma$ be a simplex with $(d+1)$-points and a Delaunay ball $B$. We say $\sigma$ is \textbf{$\delta$-unstable} if 
    \[\mathrm{dist}_{\mathbb{T}_N^d}(q, \partial B) > \delta\]
    for some $q \in P \setminus \sigma$.
\end{definition}

\new{It will be useful to consider a broader class of unstable $(d+2)$-tuples. }

\begin{definition}[$\delta$-unstable tuple]
    A $(d+2)$-tuple of points $\{z_1, \dots, z_{d+2}\}$ is \textbf{$\delta$-unstable} if there exists a spherical shell of inner radius $r < \log{N}$ and outer radius $r + \delta$ that contains those $(d+2)$-points.
\end{definition}

\new{A $\delta$-unstable $(d+2)$-tuple contains a $\delta$-unstable simplex with circumradius at most $\log{N} + \delta$ for some $Z' \subset Z.$ We consider several sub-cases.}

\begin{definition}[$\delta$-close pair, $\delta$-coplanar tuple, $\delta$-unstable separated tuple]    
    A pair of points $\{z_1, z_2\}$ is a \textbf{$\delta$-close pair} if the distance between them is less than $\delta$. \new{A $(k+2)$-tuple of points $\{z_1, \dots, z_{k+1}\}$, where $0\leq k < d,$ is \textbf{$\delta$-coplanar} if it has diameter at most ${\log{N}}$, belongs to a thickened $k$-dimensional plane with width $\delta$, and does not contain a $\delta^{1/2^{k-k'}}$-coplanar $(k'+2)$-tuple for any $0\leq k' < k$. A $\delta$-unstable $(d+2)$-tuple is called \textbf{separated} if it does not contain a $\delta^{1/2}$-coplanar $(k+2)$-tuple for any $0\leq k < d.$ Let $Z_\delta$ denote the set of all points in $\delta$-close pairs, $\delta$-coplanar tuples, and $\delta$-unstable separated tuples. We call points of $Z_\delta$ \textbf{$\delta$-unstable}.}
\end{definition}

\begin{figure}[h]
    \centering
    \includegraphics[width=0.75\linewidth]{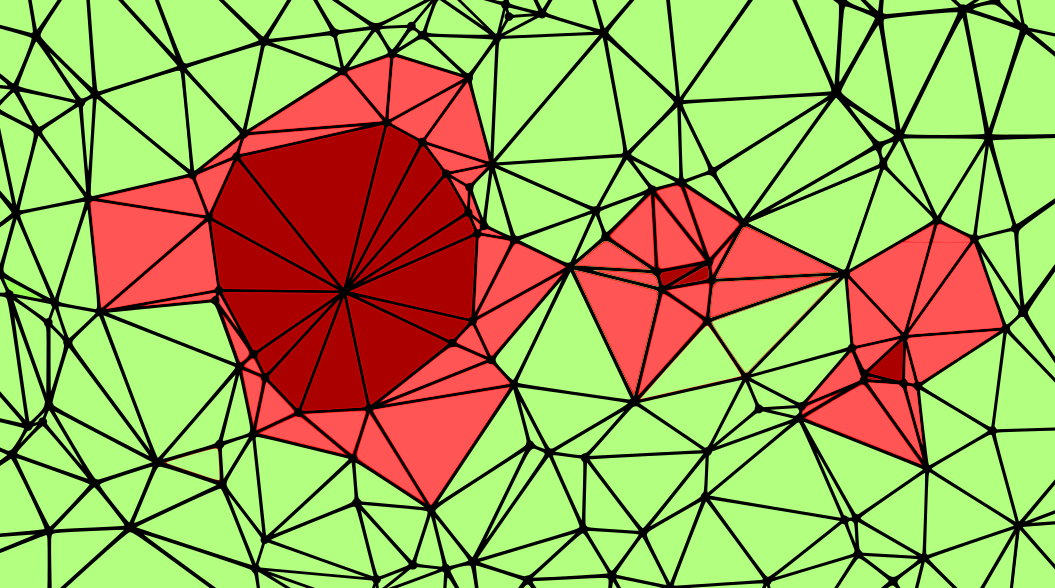}
    \caption{A rough depiction of the Delaunay triangulation of a point set with a $\delta$-bad cluster. In red are $\delta$-unstable simplices, in light red are $\delta$-bad simplices, and in light green are $\delta$-good simplices. On the left is a $\delta$-close pair, and on the right are $\delta$-unstable separated tuples.}
    \label{fig:deltabadcluster}
\end{figure}

\new{We regard a $0$-plane as a point so a $\delta$-coplanar $(0+2)$-tuple is a $\delta$-close pair. In this section, our calculation for the asymptotics of a $\delta$-close pairs is different than $\delta$-coplanar tuples, so we handle them explicitly. However, in other sections, we will consider them as a $(0+2)$-subtuple of a $\delta$-coplanar $(k+2)$-tuple. This distinction is significant, as it determines a much larger magnitude of $\delta$ on the level of the subtuple. Despite this, $\delta$-close pairs serve as a prototypical example of instability in the Delaunay complex, so we will mention them in discussions of stability.

We also count neighbors of $\delta$-unstable points. We call a point \emph{bad} if it is $\delta$-unstable or if it shares a Delaunay edge with a $\delta$-unstable point. The situation is depicted in Figure~\ref{fig:deltabadcluster}.}

\begin{definition}[$\delta$-bad point and $\delta$-bad cluster]
\label{defn:cluster}
    Let $P \subset Z$. We say that a point $z\in Z$ is \textbf{$\delta$-bad} if it is contained in the star of a $\delta$-unstable point. A \textbf{$\delta$-bad cluster} is a maximal component of $\delta$-bad points. 
\end{definition}


For the rest of this section, just as in \cite{Bollob_s_2005}, we do not explicitly discuss the underlying Delaunay triangulation. Instead, we only discuss unstable and bad points.

Lastly, we introduce the following notation to streamline the statements of some of our technical results. When we write that a random variable $X$ is $o(f(N))$ (resp. $O(f(N))$) with probability $1-o(1),$ we mean that for any $\eta > 0$ (resp. there exists $\eta>0$ so that), $X<\eta f(N)$ with high probability as $N\to \infty.$ Our goal in this section is to prove the following lemma, which corresponds to to Lemma 6.4 of \cite{Bollob_s_2005}. 

\begin{lemma}\label{lemma:clusterBound}
    Every $\delta$-bad cluster $C$ has diameter at most $\log{N}$ with probability $1 - o(1)$. Moreover, every $\delta$-bad cluster $C$ has size $o(\log{N})$ and contains $O\paren{1}$ unstable points with probability $1 - o(1)$.
\end{lemma}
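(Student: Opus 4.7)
My strategy adapts the proof of Lemma~6.4 of \cite{Bollob_s_2005} from $d=2$ to general $d$, substituting the Delaunay degree theorem of \cite{bonnet2018maximaldegreepoissondelaunaygraph} and the stability results of \cite{boissonnat2013stability} for computations internal to \cite{Bollob_s_2005}. The proof rests on three ingredients: (a) geometric control of the local Delaunay structure, (b) a Palm bound on the density of $\delta$-unstable points, and (c) a chaining argument showing unstable points rarely cluster.

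For (a), the probability that a ball of radius $r$ is empty in a Poisson process of intensity $1$ is $\exp(-c_d r^d)$, so a union bound over a net of $O(N^d)$ centers shows that every Delaunay circumradius is $O\paren{(\log N)^{1/d}}$ with probability $1-o(1)$. Combined with the maximal-degree result of \cite{bonnet2018maximaldegreepoissondelaunaygraph}, every star has geometric diameter $L = O\paren{(\log N)^{1/d}}$ and combinatorial size $s = O(\log N/\log\log N)$, both $o(\log N)$. For (b), the expected number of $\delta$-close pairs is $O(N^d\delta^d)$ by a direct integral. For $\delta$-unstable $(d+2)$-tuples I parameterize the first $d+1$ points as $x_i = c + r\theta_i$ with $c \in \T^d_N$, $r < \log N$, and $\theta_i \in S^{d-1}$, and integrate the shell volume $O(r^{d-1}\delta)$ against the polynomial Jacobian, yielding $O\paren{N^d(\log N)^{K_d}\delta}$ for some constant $K_d = K_d(d)$. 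The Palm probability that a typical Poisson point is $\delta$-unstable is therefore $p_{\mathrm{bad}} = O\paren{N^{-\epsilon}\,\mathrm{polylog}(N)}$.

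For (c), the unstable points in a bad cluster form a connected graph under the relation ``the stars of $u_i$ and $u_j$ contain Delaunay-adjacent bad points,'' so there exists a spanning tree whose consecutive vertices lie within Euclidean distance $3L$ (one edge from each $u$ to its bad point, plus one Delaunay edge between them). By the Mecke formula, the expected number of ordered $k$-tuples of Poisson points that are simultaneously $\delta$-unstable with consecutive gaps at most $3L$ is $O\paren{N^d p_{\mathrm{bad}}(L^d p_{\mathrm{bad}})^{k-1}} = N^{d-\epsilon k}\,\mathrm{polylog}(N)$, which tends to zero once $k > d/\epsilon$. Taking $k_0 = \lceil d/\epsilon\rceil + 1$ and applying Markov, no cluster contains more than $k_0 = O(1)$ unstable points with high probability. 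Assembling: every cluster lies in a union of $k_0$ stars, pairwise within $3Lk_0$, so it has diameter at most $O(Lk_0) = o(\log N) < \log N$ and size at most $k_0 s = o(\log N)$.

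\textbf{Main obstacle.} The most delicate step is the Jacobian computation in (b), especially near degenerate configurations (small $r$ or near-coplanar $\theta_i$) where one must verify the relevant integrals remain finite. The Mecke reasoning in (c) also requires some care, since ``unstable'' events at nearby points are not strictly independent when they share a witness point; however, conditioning on a bounded number of Poisson points leaves the rest of the process Poisson, so the bound $p_{\mathrm{bad}}^k$ survives up to constants.
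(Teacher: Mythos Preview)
Your overall plan matches the paper's: control star geometry via the circumradius bound and the maximum-degree theorem of \cite{bonnet2018maximaldegreepoissondelaunaygraph}, bound the density of unstable points, and then argue that a cluster with many unstable points is improbable. Parts (a) and (b) are essentially correct and correspond to the paper's Lemma~\ref{lemma:emptyDisc}, Corollary~\ref{corollary:logarithmBound}, and Lemma~\ref{lemma:destabilizeBound}.

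The gap is in (c). Your Mecke bound asserts that the joint probability that $u_1,\ldots,u_k$ are all $\delta$-unstable is at most $C\,p_{\mathrm{bad}}^k$, and you justify this by saying that after conditioning on $u_1,\ldots,u_k$ the remaining process is still Poisson. That is true but irrelevant: the points $u_i$ can witness \emph{each other's} instability without any help from the rest of the process. Concretely, if $u_1,\ldots,u_{d+2}$ all lie in a single spherical shell of width $\delta$, every one of them is $\delta$-unstable with conditional probability $1$, and integrating over such configurations contributes only a single factor of $\delta$ to the Mecke integral, not $\delta^{d+2}$. The same happens for $\delta$-close pairs. So $p_{\mathrm{bad}}^k$ does not survive up to constants; the correct exponent of $\delta$ is of order $k/(d+2)$, not $k$.

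The paper handles exactly this issue, but not via Mecke. It conditions on the number of points in a box of side $\log N$ (Lemma~\ref{lemma:chernoff}) and then, for i.i.d.\ uniform points, runs a combinatorial \emph{ordering} argument (Lemmas~\ref{lemma:pairBound} and~\ref{lemma:tupleBound}): if more than $M$ points lie in unstable $(d{+}2)$-tuples, one can select a subset $J$ of size at least $M/(d{+}2)$ and an order so that each $j\in J$ is the \emph{last} point to complete its tuple. The conditional probability of each such completion, given the earlier points, is $o(\delta^{1/2})$ by the shell-volume bound, and these conditional events are genuinely independent because they refer to the placement of distinct points in the ordering. This yields $o(\delta^{M/2(d+2)})$ after a union bound over $I$, the order, and $J$. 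You could graft this ordering device onto your Mecke framework, but as written your factorization step is the missing idea, and it is precisely what you misidentified as the \emph{less} delicate step.
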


By ``diameter" we mean the longest distance between any two points of the cluster. In \cite{Bollob_s_2005}, they enlist a technical lemma (Lemma 19) to obtain a bound on the maximal degree of $\delta$-bad points with high probability. In explaining the technical complications, they give a quick argument for the existence of a $\delta$-unstable point with degree $\Theta\paren{\frac{\log{N}}{\log{\log{N}}}}$. It turns out that this is asymptotically sharp.

\begin{theorem}[Bonnet, Chenavier~\cite{bonnet2018maximaldegreepoissondelaunaygraph}]\label{theorem:maximumdegree}
    Let $\Delta_N$ be the maximal degree in a Poisson-Delaunay triangulation over all vertices in $N^{1/d}[0,1]^d$, $d \geq 2$. Then there exists a deterministic function $N \mapsto J_{N}$, $N > 0$, with values in $\mathbb{N}$, such that
    \begin{enumerate}[label = (\roman*)]
        \item Let $l_d = \lfloor \frac{d+3}{2} \rfloor$,
        $$\mathbb{P}(\Delta_N \in \{J_{N}, J_{N} + 1, \dots, J_{N} + l_{d}\}) \underset{N \rightarrow \infty}{\longrightarrow} 1;$$
        \item 
        $$J_{N} \underset{N \rightarrow \infty}{\sim} \frac{d - 1}{2} \frac{\log{N}}{\log{\log{N}}}$$.
    \end{enumerate}
\end{theorem}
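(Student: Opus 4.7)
The plan is to prove a sharp extreme-value statement for the degree sequence of the Poisson-Delaunay graph on $N^{1/d}[0,1]^d$ by combining three ingredients: (i) sharp tail asymptotics for the degree of a typical (Palm) vertex; (ii) a first-moment calculation to identify the scale $J_N$; and (iii) a Poisson approximation argument (of Chen--Stein type) to upgrade the first moment into concentration on the window $\{J_N,\ldots,J_N+l_d\}$.

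First I would apply the Slivnyak--Mecke formula to reduce the study of $\Delta_N$ to that of the degree $\deg(o)$ of the origin in the Palm version of the process, which is distributed as the Delaunay degree of $o$ in $\{o\}\cup Z$ for an independent unit-intensity Poisson process $Z$. The event $\{\deg(o)\geq k\}$ decomposes as an integral over ordered $k$-tuples $(z_1,\ldots,z_k)$ of candidate neighbors, each sharing an empty Delaunay ball with $o$. Fixing such a ball of radius $r$ incurs a Poisson void factor that decays like the exponential of a constant times $r^d$, while constraining the $z_i$ to lie near the sphere of radius $r$ about $o$ contributes a phase-space factor that scales polynomially in $r$. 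Optimizing over $r$ and over the angular distribution of the neighbors on the sphere yields
\[\mathbb{P}(\deg(o)\geq k)=\exp\!\left(-\tfrac{2}{d-1}\,k\log k\,(1+o(1))\right),\]
which can be sharpened, via a Laplace/saddle-point analysis, to control the sub-leading $k\log\log k$ and $O(k)$ corrections.

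The first moment of the number $M_k$ of vertices of degree at least $k$ in the box is $N\cdot\mathbb{P}(\deg(o)\geq k)$, which equals $\Theta(1)$ precisely when $k=J_N$ with $J_N\sim\tfrac{d-1}{2}\,\log N/\log\log N$; this pins down the scale in (ii). To obtain concentration I would compute the covariance $\mathbb{E}[M_k^2]-(\mathbb{E}[M_k])^2$ and exploit the fact that the Delaunay degree at a vertex is a local functional whose stabilization radius has a stretched-exponential tail, so that events at vertices separated by more than $(\log N)^{O(1)}$ are nearly independent. A Chen--Stein bound then shows that $M_k$ is approximately Poisson with parameter $\lambda_k=N\,\mathbb{P}(\deg(o)\geq k)$, so $\mathbb{P}(\Delta_N\geq k)$ exhibits a sharp transition where $\lambda_k$ crosses $1$.

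The main difficulty, and the source of the explicit window width $l_d=\lfloor(d+3)/2\rfloor$, lies in tracking the sub-leading terms in the tail asymptotic finely enough to pin down the integer at which this transition occurs. After the $-\tfrac{2}{d-1}k\log k$ leading term, the next corrections take the form $\tfrac{2}{d-1}k\log\log k+ck$ for an explicit constant $c$ depending on the $(d-1)$-sphere geometry of the extremal neighbor configuration; substituting $k=J_N+j$ and comparing with $\log N$ then shows that $\lambda_{J_N+j}$ crosses $1$ within a band of consecutive integers of length exactly $l_d$. Carrying out this fine asymptotic rigorously --- in particular, identifying the extremal configuration on $\partial B$ and controlling the Gaussian fluctuations around it --- is the main technical obstacle, since the leading $k\log k$ term alone gives concentration only on an $o(\log\log N/\log N)$-fraction of the scale, not on an $O(1)$ integer window.
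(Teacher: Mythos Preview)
The paper does not prove this theorem at all: it is quoted as an external result of Bonnet and Chenavier \cite{bonnet2018maximaldegreepoissondelaunaygraph} and used as a black box. Immediately after stating it, the authors write ``We don't need the full strength of this theorem'' and extract only the crude corollary that the maximal degree is $o(\log N)$ with probability $1-o(1)$ (Corollary~\ref{corollary:logarithmBound}), which is all that is needed downstream in Corollary~\ref{corollary:badpointbound} and Lemma~\ref{lemma:clusterBound}. So there is no proof in this paper to compare your proposal against.

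As a sketch of how one might prove the Bonnet--Chenavier result itself, your outline is reasonable in spirit: Slivnyak--Mecke to reduce to the Palm degree at the origin, sharp tail asymptotics for $\mathbb{P}(\deg(o)\geq k)$, and a first/second moment or Poisson approximation argument to localize $\Delta_N$. That is indeed the architecture of their paper. But your description of the tail computation is too vague to be called a proof: the decomposition of $\{\deg(o)\geq k\}$ into configurations of $k$ neighbors sharing empty balls with $o$ is not a single Laplace integral over one radius $r$, and identifying the extremal configuration on $\partial B$ together with the exact sub-leading constants that produce the integer window of width $l_d=\lfloor(d+3)/2\rfloor$ is the entire substance of the Bonnet--Chenavier argument. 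You acknowledge this as ``the main technical obstacle,'' which is accurate, but it means your proposal is a plan rather than a proof. For the purposes of the present paper none of this matters: only the $o(\log N)$ bound is ever used.
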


We don't need the full strength of this theorem so we write the following corollary for convenience.

\begin{corollary}{\label{corollary:logarithmBound}}
     The maximal degree of the Poisson point process on $\mathbb{T}_N^d$ is $o(\log{N})$ with probability $1 - o(1)$.
\end{corollary}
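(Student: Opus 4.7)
The plan is to obtain the corollary as an essentially immediate consequence of Theorem~\ref{theorem:maximumdegree}, doing only minor bookkeeping to pass from the Euclidean cube setting (in which the cited theorem is stated) to the torus setting (in which we need the conclusion).

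First I would translate the scaling: a Poisson process of intensity $1$ on $\mathbb{T}_N^d$ has total expected mass $(2N)^d$, so plugging $M=(2N)^d$ into Theorem~\ref{theorem:maximumdegree} yields a maximal Delaunay degree on the cube of side $2N$ that is concentrated on $\{J_M,\ldots,J_M+l_d\}$ with probability $1-o(1)$, and
\[
J_M \sim \frac{d-1}{2}\,\frac{\log M}{\log\log M} = \frac{d-1}{2}\,\frac{d\log(2N)}{\log\bigl(d\log(2N)\bigr)} \sim \frac{d(d-1)}{2}\,\frac{\log N}{\log\log N}.
\]
In particular this bound is $o(\log N)$, which is all we need.

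Second I would argue that the same bound holds on the torus. The degree of a vertex $z$ in $\mathrm{Del}(Z)$ depends only on the configuration of $Z$ inside a ball whose radius is twice the largest circumradius of any Delaunay simplex incident to $z$. Under the intensity-$1$ Poisson measure, the maximal circumradius on $\mathbb{T}_N^d$ is $O(\log N)$ with probability $1-o(1)$ (a standard consequence of empty-ball estimates for the Poisson process, which is used throughout the paper and will be invoked again for the $\delta$-unstable tuple bounds). Since $\log N$ is exponentially smaller than $N$, the periodic identifications do not affect these local neighborhoods, and the distribution of degrees on $\mathbb{T}_N^d$ agrees with that on the large cube away from a $o(1)$-fraction of vertices.

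The combination gives the statement, and because the only quantitative input is the leading-order asymptotic $J_M=\Theta(\log N/\log\log N)$, the constant $l_d$ in Theorem~\ref{theorem:maximumdegree} and any boundary effects play no role. The ``main obstacle,'' such as it is, is simply phrasing the reduction from the cube to the torus cleanly; there is no real analytic content beyond quoting the two named results.
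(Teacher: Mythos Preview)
Your proposal is correct and follows essentially the same approach as the paper: apply Theorem~\ref{theorem:maximumdegree} with the appropriate scaling, observe that the resulting bound is $\Theta(\log N/\log\log N)=o(\log N)$, and conclude. The paper's proof is in fact briefer than yours---it simply plugs in the scaling and does not explicitly justify the passage from the cube to the torus---so your added paragraph on locality of degrees (via the empty-ball circumradius bound) is more careful than what the paper provides, but not a different route.
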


\begin{proof}
    Let $\Delta_{N^d}$ be the maximal degree in a Poisson-Delaunay graph over all nodes in $N[0,1]^d = (N^d)^{1/d} [0,1]^d$. By the previous theorem,
    \[\mathbb{P}\bigg(\Delta_{N^d} = O\bigg(\frac{d-1}{2}\frac{\log{(N^d)}}{\log{\log{(N^d)}}}\bigg)\bigg) \to 1\,.\]
    Lastly, $\frac{d - 1}{2} \frac{\log{(N^d)}}{\log{\log{(N^d)}}} = o(\log{N}).$ 
\end{proof}

\subsection{Poisson-Delaunay preliminaries}
We first review some elementary results on Poisson point processes. Throughout this section, recall that $\delta = N^{-\epsilon}$. Moreover, for a Borel set $\Omega \subset \T_N^d$, we write $\|\Omega\|$ to denote its volume.

\begin{lemma}{\label{lemma:emptyDisc}}
    Every simplex $\sigma$ has circumradius at most $\rho \sqrt[d]{\log{N}}$ for some $\rho > 0$ with probability $1 - o(1)$. In particular, $Z$ is a $(\log{N})$-sample set with probability $1 - o(1)$.
\end{lemma}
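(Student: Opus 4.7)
The plan is to reduce both assertions to a standard Poisson void-probability estimate applied to a tiling of the torus. Choose $\rho > 0$ (to be fixed at the end) and partition $\mathbb{T}_N^d$ into axis-aligned cubes of side length $s = \rho (\log N)^{1/d}$. The total number of cubes is $\lceil 2N/s \rceil^d = O\!\left(N^d / \log N\right)$, and the probability that a given cube contains no point of $Z$ is $e^{-s^d} = N^{-\rho^d}$. A union bound therefore gives
\[
  \mathbb{P}(\text{some cube is empty}) = O\!\left(N^{d-\rho^d}/\log N\right),
\]
which is $o(1)$ as soon as $\rho^d > d$.

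Next I would transfer this ``no empty cube'' statement into a bound on the radii of empty balls, since Delaunay balls are empty by definition. If $B(x,R)$ is any ball with $R \geq s\sqrt{d}$, then the unique cube of the tiling containing $x$ has diameter $s\sqrt{d} \leq R$ and hence lies entirely inside $B(x,R)$; in particular, if $B(x,R)$ is empty of points of $Z$ then so is that cube. Contrapositively, when no cube is empty, no empty ball of radius $\geq s\sqrt{d}$ exists. Since every Delaunay simplex has circumradius equal to the radius of a maximal empty ball, this yields the first claim with constant $\rho \sqrt{d}$ (absorbed into a new $\rho$) with probability $1-o(1)$.

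For the $(\log N)$-sample statement, I would observe that for any $x \in \mathbb{T}_N^d$ the open ball $B(x,\mathrm{dist}(x,Z))$ is empty of points of $Z$. The previous step therefore forces $\mathrm{dist}(x,Z) < s\sqrt{d} = O((\log N)^{1/d})$ for every $x$ with probability $1-o(1)$, and since $(\log N)^{1/d} \leq \log N$ for all sufficiently large $N$, $Z$ is a $(\log N)$-sample set with probability $1-o(1)$.

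The argument is really just a Markov/union bound combined with the geometric observation that a ball contains the cube in which its center sits once the ball is large enough. The only point that requires a little care is making sure the tiling size $s$ is simultaneously small enough that the ``Delaunay-ball $\Rightarrow$ empty cube'' implication holds, and large enough that the void probability beats the number of cubes; balancing these forces the $s \sim (\log N)^{1/d}$ scale, and choosing any $\rho$ with $\rho^d > d$ and then enlarging it by $\sqrt{d}$ completes the proof.
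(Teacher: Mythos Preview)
Your proposal is correct and follows essentially the same route as the paper: compute the Poisson void probability for a region of volume $\Theta(\log N)$, take a union bound over $O(N^d)$ such regions, and choose the constant so that the exponent beats $d$. The paper covers with balls while you tile with cubes and then make explicit the geometric step that an empty Delaunay ball of radius $\geq s\sqrt d$ forces an empty cube; this extra explicitness is fine but not a genuinely different idea.
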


\begin{proof}
    For any Borel set $\Omega$ and number of points $\xi := |\Omega \cap Z|$, we have
    \[\mathbb{P}(\xi = 0) = e^{-\|\Omega\|}.\]
    If $\Omega$ is a ball of radius $\rho \sqrt[d]{\log{N}}$ and $\rho > \sqrt[d]{d}$, then $\|\Omega\| > d\log{N}$. Hence,
    \[\mathbb{P}(\xi = 0) = e^{-\rho^d\log{N}} = N^{-\rho^d} = o(N^{-d}).\]
    Moreover, the torus may be covered with $O(N^d)$ overlapping balls $\Omega$. The probability that at least one ball is empty is $o(1)$. Hence, the probability that none are empty is $1 - o(1)$. 
\end{proof}

Let $\Lambda$ be a cube of length $\log{N}$ and let $|\Lambda|$ denote the number of vertices of $Z$ in the cube, so $|\Lambda|$ is a Poisson random variable with mean $(\log{N})^{d}$. 

\begin{lemma}\label{lemma:chernoff}
    Let $B_1(\Lambda)$ be the event that $\Lambda$ contains more than $3(\log{N})^{d}$ points. Then, $\mathbb{P}(B_{1}) = o(N^{-d})$.
\end{lemma}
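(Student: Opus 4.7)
The plan is to treat this as a routine tail bound for a Poisson random variable. Since $|\Lambda|$ is Poisson with mean $\mu := (\log N)^d$ and the threshold $3(\log N)^d = 3\mu$ is a constant multiple of the mean, the standard Chernoff-type bound for Poisson tails gives exponential decay in $\mu$, which is easily $o(N^{-d})$ once $d\ge 2$.

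Concretely, I would begin by recording the Chernoff bound for a Poisson$(\mu)$ random variable $X$: for any $t>1$,
\[
\mathbb{P}(X \geq t\mu) \;\leq\; e^{-\mu}\Bigl(\tfrac{e}{t}\Bigr)^{t\mu},
\]
which follows in one line from Markov's inequality applied to $e^{\lambda X}$ and the Poisson MGF $\mathbb{E}[e^{\lambda X}] = \exp(\mu(e^{\lambda}-1))$, optimized at $e^{\lambda} = t$. I would then specialize to $t = 3$, obtaining
\[
\mathbb{P}\bigl(|\Lambda| \geq 3(\log N)^d\bigr) \;\leq\; \exp\bigl((2 - 3\ln 3)(\log N)^d\bigr) \;=\; \exp\bigl(-c(\log N)^d\bigr),
\]
where $c := 3\ln 3 - 2 > 0$.

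Finally, since the paper is in dimension $d\geq 2$, I would compare
\[
c(\log N)^d \;\geq\; c(\log N)^2 \;=\; (c\log N)\cdot \log N \;\gg\; d\log N
\]
for all sufficiently large $N$, so $\exp(-c(\log N)^d) = o(N^{-d})$. This gives $\mathbb{P}(B_1(\Lambda)) = o(N^{-d})$ as required.

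There is no real obstacle here; the only mild point worth flagging is ensuring that the constant $c = 3\ln 3 - 2$ is indeed positive (numerically $c \approx 1.296$), so that even the weak bound $(\log N)^d \geq \log N$ would suffice in dimensions $d\geq 2$. If one wanted a cleaner statement that is dimension-uniform, replacing the threshold $3\mu$ by $e^2\mu$ would give the slightly cleaner exponent $-\mu = -(\log N)^d$, but the constant $3$ already works.
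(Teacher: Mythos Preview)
Your proposal is correct and takes essentially the same approach as the paper: both apply the standard Chernoff bound $\mathbb{P}(X\ge a)\le (e\mu/a)^a e^{-\mu}$ for a Poisson$(\mu)$ variable with $\mu=(\log N)^d$ and $a=3\mu$, obtaining decay $\exp(-(3\ln 3-2)(\log N)^d)$, which is $o(N^{-d})$. Your write-up is slightly more detailed in deriving the Chernoff bound and in making the final comparison explicit, but the argument is identical in substance.
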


\begin{proof}
    By using a Chernoff bound, we have
    \begin{align*}
        \mathbb{P}(|\Lambda| \geq a) &\leq \bigg(\frac{e\|\Lambda\|}{a}\bigg)^{a} e^{-\|\Lambda\|}\,. \\
        \intertext{Setting $a = 3(\log{N})^{d}$ and $\|\Lambda\| = (\log{N})^{d}$, we obtain}
         \mathbb{P}(|\Lambda| \geq 3) &\leq \bigg(\frac{e(\log{N})^{d}}{3(\log{N})^{d}}\bigg)^{3(\log{N})^{d}} e^{-(\log{N})^{d}} \\
        &= \bigg(\frac{e}{3}\bigg)^{3(\log{N})^{d}} e^{-(\log{N})^{d}} \\
        &= o(N^{-d})\,.
    \end{align*}
\end{proof}

\subsection{Unstable points and bad clusters}
We adapt the proof of Lemma 6.4 of \cite{Bollob_s_2005} to higher dimensions, and account for the phenomenon of $\delta$-coplanar tuples which do not occur in two dimensions. We also make use of recent results such as as the Maximum Degree Theorem of \cite{bonnet2018maximaldegreepoissondelaunaygraph} for greater brevity.



\begin{lemma}\label{lemma:pairBound}
    Let $\Omega$ be a Borel set satisfying $\|\Omega\| \geq 1$. Place $\xi =O((\log{N})^d)$ points i.i.d. uniformly in $\Omega$ and let $M > 0$. The probability that there are more than $\frac{M (d+1)}{d \epsilon}$ $\delta$-close pairs is $O(\delta^{M/2}).$
\end{lemma}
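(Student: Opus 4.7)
The plan is to apply Markov's inequality to $\binom{X}{k}$, where $X$ is the number of $\delta$-close pairs, giving $\mathbb{P}(X \geq k) \leq \mathbb{E}[\binom{X}{k}]$; the task reduces to estimating this factorial moment. For any fixed pair $\{z_i, z_j\}$ of i.i.d.\ uniform points on $\Omega$, conditioning on $z_i$ and integrating over $z_j$ shows
\[\mathbb{P}(\|z_i - z_j\| < \delta) \leq \nu_d \delta^d / \|\Omega\| \leq p := \nu_d \delta^d,\]
using the hypothesis $\|\Omega\| \geq 1$.

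I would then expand $\mathbb{E}[\binom{X}{k}]$ as a sum over unordered $k$-element subsets of pairs from $\binom{[\xi]}{2}$. Each subset spans a graph $G$ on $v(G) \leq 2k$ labeled vertices with $c(G)$ connected components. By independence of the $\xi$ uniform points and a spanning-tree integration---fix a root in each component of $G$, then integrate each non-root vertex over a ball of radius $\delta$ around its parent, contributing a factor of at most $p$---one obtains
\[\mathbb{P}(\text{all } k \text{ pairs } \delta\text{-close}) \leq p^{v(G) - c(G)}.\]
Grouping by graph type and taking the worst case $c = 1$,
\[\mathbb{E}\!\left[\textstyle\binom{X}{k}\right] \;\leq\; \sum_{v} \binom{\xi}{v} \binom{\binom{v}{2}}{k}\, p^{v - 1}.\]

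Substituting $\xi = O((\log N)^d)$ and $p = O(N^{-\epsilon d})$, each summand is of order $O(N^{-\epsilon d(v - 1) + o(1)})$, and with $k = \lceil M(d+1)/(d\epsilon) \rceil + 1$ one verifies termwise that each summand is $O(\delta^{M/2})$ for every admissible $v$ in the range $\lceil(1 + \sqrt{1 + 8k})/2\rceil \leq v \leq 2k$. Since there are only polynomially many $v$-values, summing gives $\mathbb{P}(X \geq k) = O(\delta^{M/2})$ as claimed. The main obstacle will be handling the small-$v$ (clique-like) end of the sum: for $v \approx \sqrt{2k}$, the spanning-tree bound yields only $p^{\sqrt{2k} - 1}$ rather than $p^k$, so one must carefully balance this against the smaller vertex-selection factor $\binom{\xi}{v}$, exploiting $\xi p = O((\log N)^d N^{-\epsilon d}) \to 0$ to ensure the bound $O(\delta^{M/2})$ holds uniformly across all $v$.
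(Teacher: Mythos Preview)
Your approach via factorial moments of the pair count is different from the paper's and contains a real gap.

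First, a directional error: when you write ``taking the worst case $c=1$'' and bound by $p^{v-1}$, you have the inequality backwards. Since $p<1$, the quantity $p^{v-c}$ is \emph{maximized} when $c$ is largest; a graph on $v$ non-isolated vertices with $k$ edges can have up to $\lfloor v/2\rfloor$ components, so the honest upper bound on a general term is only $p^{\lceil v/2\rceil}$, not $p^{v-1}$. This slip is not the crux, because at $v=v_{\min}$ the only admissible graph is complete and $c=1$ is forced there anyway.

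The crux is the obstacle you identify but do not actually overcome. The complete-graph term on $v_{\min}\approx\sqrt{2k}$ vertices contributes $\binom{\xi}{v_{\min}}p^{v_{\min}-1}$, and your balancing via $\xi p\to 0$ gives at best order $N^{-\epsilon d(v_{\min}-1)+o(1)}$. With $k\asymp M(d+1)/(d\epsilon)$ this exponent is only of order $\sqrt{\epsilon M}$, whereas the target $\delta^{M/2}=N^{-\epsilon M/2}$ requires an exponent of order $\epsilon M$. Concretely, you would need $d(v_{\min}-1)\geq M/2$, which fails once $M$ exceeds roughly $8d(d+1)/\epsilon$. So the factorial-moment method, as outlined, does not establish the lemma uniformly in $M>0$: it cannot see that having $\Theta(M/\epsilon)$ close pairs concentrated on only $\Theta(\sqrt{M/\epsilon})$ points is itself extremely unlikely.

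The paper sidesteps this by changing the counted variable. It orders the points and lets $Y$ be the number of indices $i$ for which $z_i$ falls within $\delta$ of some earlier $z_j$; each such event has conditional probability $O(\delta^{1/2})$, so $\mathbb{P}(Y\geq M)$ is $O(\delta^{M/2})$ up to polylogarithmic factors, with the exponent scaling linearly in $M$. The link back to pairs is a separate deterministic step: with probability $1-o(N^{-d})$ no $\delta$-ball contains more than $(d+1)/(d\epsilon)$ points, so each index contributing to $Y$ accounts for at most $(d+1)/(d\epsilon)$ close pairs, and hence more than $M(d+1)/(d\epsilon)$ pairs forces $Y\geq M$. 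This decoupling of ``many pairs on a small clique'' from the main count is precisely what the direct pair-moment computation is missing.
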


\begin{proof}
    Let $z_{1}, \dots, z_{\xi}$ be an ordering of the points and let $B_{\delta}(z)$ denote a $\delta$-ball centered at a point $z$. The conditional probability that $z_i$ is $\delta$-close to an earlier point $z_{1}, \dots, z_{i - 1}$ is at most $$\frac{(i-1)\|B_{\delta}\|}{\|\Omega\|} \leq \xi\|B_\delta\| = O(\xi\delta)=O(\delta (\log{N})^{d}) \leq O(\delta^{1/2})\,,$$ where we have used $\|\Omega\| \geq 1$, $\xi= O((\log{N})^{d})$, and $\|B_\delta\| = O(\delta)$. So, the probability that $M$ points $z_i$ are $\delta$-close to a previously placed point is $O(\delta^{M/2})$. 
    
    Each $z_i$ could be $\delta$-close to many earlier points, however, with probability $1 - o(N^{-d})$, there are at most $(d+1)/d\epsilon$ possibly earlier points. Indeed, the probability of placing more than $a = (d+1)/d\epsilon$ points in $B_\delta(z_i)$ is: 
    \begin{align*}
        &\mathbb{P}(B_\delta(z_i) > a \, | \, \xi) \\ &= \sum_{k={a+1}}^\xi\binom{\xi}{k} \bigg(\frac{\|B_\delta(z_i)\|}{\|\Omega\|}\bigg)^k\bigg(1-\frac{\|B_\delta(z_i)\|}{\|\Omega\|}\bigg)^{\xi-k} \\
        &\leq \bigg(\frac{\|B_{\delta}(z_i)\|}{\|\Omega\|}\bigg) ^a  \underbrace{\sum_{k={a+1}}^\xi\binom{\xi}{k} \bigg(\frac{\|B_\delta(z_i)\|}{\|\Omega\|}\bigg)^{k-a}\bigg(1-\frac{\|B_\delta(z_i)\|}{\|\Omega\|}\bigg)^{\xi-k}}_{\leq1} \\
        &\leq \|B_\delta(z_i)\|^a \\
        &= O(\delta^a) \\
        &= o(N^{-d}).
    \end{align*}

    Thus, the conditional probability that $\Omega$ has more than $M (d+1)/d\epsilon$ points given $\xi$ and an ordering of points is $O(\delta^{M/2})$. As the bound on the conditional probability is independent of $\xi$ and the ordering of points, we infer that the probability that $\Omega$ contains more than $M (d+1)/d\epsilon $ points that are $\delta$-close to an earlier point is $O((\delta^{1/2})^{M}).$ 
\end{proof}




\new{We proceed similarly for $\delta$-coplanar $(k+2)$-tuples, but first enlist the use of a technical lemma.}

\begin{lemma}
The set of $z$ such that $z_1, \dots, z_{k+1}, z \subset \R^d$ forms a $\delta$-coplanar $(k+2)$-tuple has volume $o(\delta^{1/4})$ uniformly in $z_1,\ldots,z_{k+1}.$ 
\end{lemma}

\begin{proof}
    Suppose $z_1,\ldots,z_k,z_{k+1}$ form a $\delta$-coplanar $(k+2)$-tuple and let $H$ be a $k$-plane satisfying $\dist(H, z_i) < \delta$ for all $i.$ Let $H_0$ be the plane generated by $z_1, \ldots, z_{k+1}.$ If $H_0$ and $H$ are parallel, then we are done. Otherwise, without loss of generality, by translation and rotation, we may assume that $z_1, \dots, z_{k+1},z \subset \R^{k+1}=\R^{k+1}\times\set{0}^{d-k-1},$ $H_0$ is the solution set to the equation $x_{k+1} = 0$ in $\R^{k+1}$, and $0 \in H$. 
    Let us write $H$ as the solution set of a linear system,
    $$H = \set{x \in \R^{k+1} \mid \sum_{i=1}^{k+1} b_i x_i = 0},$$
    where $b_1, \ldots, b_{k+1} \in \R.$ Since $z_1, \ldots, z_{k+1} \in H$ are not $\delta^{1/2}$-coplanar, $b_{k+1} \not= 0$ and we may view $H$ as the graph of a height function in $\R^{k+1}.$  We parameterize $H$ by $x_{k+1} = \sum_{i = 1}^{k} \frac{b_i}{-b_{k}} x_i =: f(x).$ For $x \in H_0,$ we call $f(x)$ the height of $x.$ With our assumptions finished, we argue that any point $x \in H$ that is $\log{N}$-close to $z_1$ has height $O(\delta^{1/3}).$

    Let $\proj_H(x)$ and $F\paren{x}$ be the projections onto $H$ and $H_0,$ respectively, and set $w_i=F\paren{\proj_H(z_i)}.$ 
    We have by Pythagorean theorem,
    $$\dist(\proj_H(z_i), w_i)^2 + \dist(w_i, z_i)^2 = \dist\paren{z_i, \proj_H\paren{z_i}}^2 \leq \delta^2$$
    so
    $$\dist\paren{w_i, z_i} \leq \delta\,.$$

    The tuple $z_1, \dots, z_{k+1}$ is not $\delta^{1/2}$-coplanar. That is, for $0 \leq k' \leq k-1,$ every $(k'+2)$-subtuple of $z_1, \dots, z_{k+1}$ is not $\delta^{1/2^{k-k'+1}}$-close to a $k'$-plane. Let $I$ be the index set of a $(k'+2)$-subtuple, then $\set{w_i}_{i \in I}$ is not $(\delta^{1/2^{k-k'+1}} - \delta)$-close to a $k'$-plane. Otherwise, if $H_w$ is a $k'$-plane where $\dist(w_i, H_w) < \delta^{1/2^{k-k'}}$, then $\dist(z_i, H_w) \leq \dist(z_i, w_i) + \dist(w_i, H_w) \leq \delta^{1/2^{k-k'}},$ a contradiction. Lastly, when $\delta$ is sufficiently small we have the inequality $(\delta^{1/2^{k-k'}} - \delta) \geq C(\delta^{1/2} - \delta)^{1/2^{k-k'-1}},$ 
    for any constant $1 > C > 0.$ Indeed,
    $$\lim_{x\downarrow 0^+}\frac{x^{1/2^{k-k'}} - x}{(x^{1/2}-x)^{1/2^{k-k'-1}}} = \lim_{x\downarrow 0^+} \paren{\frac{x^{1/2}}{x^{1/2}-x}}^{1/2^{k-k'-1}} = 1.$$
    So $w_1, \dots, w_{k+1}$ is not $C(\delta^{1/2}-\delta)$-coplanar.
     
    
    \begin{figure}
        \centering
        \includegraphics[width=0.5\linewidth]{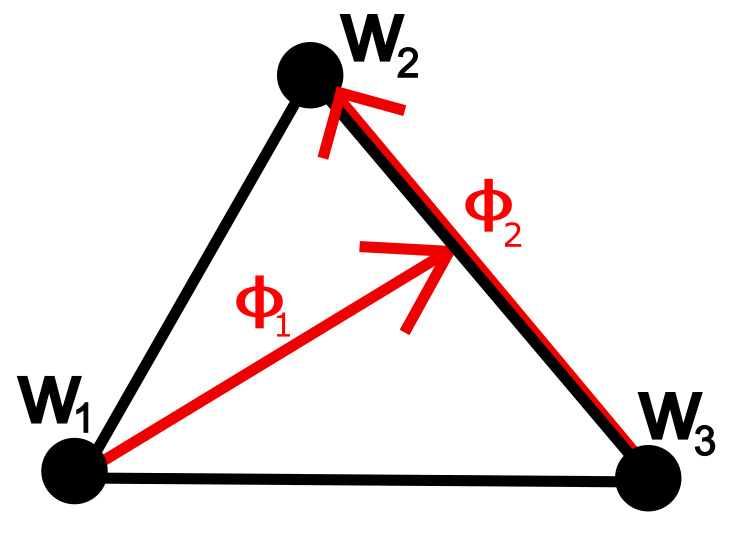}
        \caption{The tuple $w_1, w_2, w_3$ and the orthogonal basis $\{\phi_i\}_{i=1}^3$ generated by them.}
        \label{fig:displacementBasis}
    \end{figure}

    Let $\phi_{i}$ be the displacement vector from $w_i$ to the projection of $w_i$ onto the plane generated by $w_{i + 1}, \dots, w_{k}.$ 
    This yields an orthogonal set of vectors $\{\phi_{i}\}_{i=1}^{k-1}$ that span $H_0$ as shown in Figure~\ref{fig:displacementBasis}, as $\set{w_i, \dots, w_{k+1}}$ is not $C(\delta^{1/2}-\delta)$-coplanar, $\phi_i$ has magnitude at least $C(\delta^{1/2} - \delta)$. 
    
    As $f$ is linear,
    \begin{align*}
        |f(\phi_i)| &= |f(w_i + \phi_i) - f(w_i)| \\
        &\leq |f(w_i + \phi_i)| + |f(w_i)|\\
        &\leq (\delta) + (\delta) \\
        &= 2\delta.
    \end{align*}
    Above, we used that $f(w_i) \leq \delta$ for all $i$ and that $w_i + \phi_i$ belongs to the convex hull of $w_1, \dots, w_{k+1}$ and therefore $f(w_i + \phi_i) \leq \delta.$ 
    
    Let $e_{k+1}=\brac{ 0, 0,\ldots,1}^T \in \R^{k+1}$ let $z_{k+2}$ be represented as $w_1 + \sum_{i = 1}^{k} \frac{c_i}{\|\phi_i\|} \phi_i + c_{k+1}e_{k+1}.$ Then, 
    \begin{align*}
        |f(z_{k+2})| &= \bigg|f\bigg(w_1+\sum_{i=1}^{k}\frac{c_i}{\|\phi_i\|} \phi_i \bigg)\bigg| \\
        &\leq \abs{f(w_1)}+\sum_{i =1}^{k}\frac{|c_i|}{\|\phi_i\|}|f(\phi_i)| \\
        &< (\delta)+\sum_{i=1}^k\frac{|c_i|}{C(\delta^{1/2} - \delta)}(2\delta) \\
        &= \delta+\frac{2\delta^{1/2}}{C(1-\delta^{1/2})} \sum_{i =1}^k|c_i|.
    \end{align*}
    Lastly, as $z_{k+2}$ is $(\log{N} + \delta)$-close to $w_1,$ $c_i \leq \log{N}  + \delta$ for all $i.$ Hence, 
    $$|f(z_{k+2})| \leq \delta+\frac{2\delta^{1/2}}{C(1-\delta^{1/2})}\cdot k(\log{N} + \delta) = O(\delta^{1/3})\,.$$
    
    That is, $z_{k+2}$ must be $O(\delta^{1/3})$ close to $H$. It follows that the set of points $z \subset \R^d$ which form a $\delta$-coplanar $(k+2)$-tuple with $z_1,\ldots,z_{k+1}$ has volume $O(\delta^{1/3}(\log{N})^{k}) = o(\delta^{1/4}).$
\end{proof}

\begin{lemma}\label{lemma:coplanarBound}
    \new{Let $\Omega$ be a Borel set satisfying $\|\Omega\| \geq 1$. Place $\xi := O((\log{N})^d)$ points i.i.d. uniformly in $\Omega.$ Let $T_M$ be the event that there are more than $M$ points of $\Omega$ belonging to $\delta$-coplanar $(k+2)$-tuples. Then, $\mathbb{P}(T_M) =o(\delta^{M/5(k+3)})$.}
\end{lemma}

\begin{proof}
    If $T_M$ occurs, then there exists a subset $I \subset [\xi]=\{1,2,\dots, \xi\}$ where $|I| \leq M + k + 1$, an order on $I$, and a subset $J \subset I$ with $|J| \leq \frac{M}{k+2}$ so that the following occurs: for all $j \in J$, $z_j$ belongs to a $\delta$-coplanar $(k+2)$-tuple with previously placed points. Let $C_J$ denote this event. Our argument, borrowed from ~\cite{Bollob_s_2005}, is to compute a crude upper-bound on the number of possible $I$, orders on $I$, and choices of $J$, and to multiply this with the conditional probability $\mathbb{P}(C_{J} \,| \, I, \text{order on $I$})$. 
    
    First, we compute an upper-bound for the number of choices of $I$. There are precisely $\sum_{i = 0}^{M+k+2} \binom{P}{i}$ subsets $I$ where $|I| \leq M + k+1$. By using the upper-bound $\binom{n}{i} \leq (\frac{en}{i})^i$, we obtain
    \[\sum_{i = 0}^{M + k+1}\binom{\xi}{i} \leq \sum_{i=0}^{M+i}\bigg(\frac{e\xi}{i}\bigg)^i \leq \xi^{M + k+1} \sum_{i =0}^{M+k+1}\bigg(\frac{e\xi}{k\xi^{M+k+1}}\bigg)^i \leq \xi^{M+k+2}\,.\]
    Secondly, the number of choices of orders on $I$ is $|I|! \leq |I|^{|I|} = (M + k+1)^{M + k+1}$ and we have at most $2^{M+k+1}$ choices for $J$. Lastly, we compute a bound on the conditional probability $\mathbb{P}(C_{J} \, | \, I, \text{order on $I$})$.
    
    \new{
    For each $j \in J$, the conditional probability that $z_j$ forms a $\delta$-coplanar $(k+2)$-tuple with previously placed points in $I$ is $$O\paren{\binom{I}{k+2}\frac{\delta^{1/4}}{\|\Omega\|}} = o(\delta^{1/5})$$
    by the uniform distribution of $z_j$, hypothesis $\|\Omega\| \geq 1$, and previous Lemma. So, $\mathbb{P}(C_J \, | \, I, \text{order on $I$}) = o(\delta^{|J|/5}) \leq o(\delta^{M/5(k+2)})$.}

    Putting it all together,
    \begin{align*}
        \mathbb{P}(T_M) &= \overbrace{O((\log{N})^d)^{(M+k+2)}}^{\text{Choices of $I$}} 
        \overbrace{(M+k+2)^{M+k+1}}^{\text{Orders on $I$}}
        \overbrace{2^{M+k+1}}^{\text{Choices of $J$}}
        \mathbb{P}(C_J \, | \, I, \text{order on $I$}) \\
        &= O
        \bigg(\delta^{M/5(k+2)} (\log{N})^{d(M+k+2)}\bigg) \\
        &= o(\delta^{M/5(k+3)}).
    \end{align*}
\end{proof}

\begin{corollary}\label{corollary:coplanarBoundComplete}
    \new{There exist constants $M(k)$ that if $B_{3} = B_{3}(\Lambda)$ is the event that that $\Lambda$ contains more than $M(k)$ $\delta^{1/2}$-coplanar $(k + 2)$-tuples for at least one of $k = 1, \ldots, d-1$ then $\mathbb{P}_p\paren{B_{3,k}}=o(N^{-d})$.}
\end{corollary}

\new{Note any $z_1, \dots, z_{k+1}$ $\delta$-coplanar tuple is $\delta^{\alpha}$-coplanar for $\alpha < 1$ as $\delta^{\alpha} \gg \delta$.}

\begin{proof}
    \new{We define $M(k)$ by
    $$M(k) := \begin{cases}
        \nicefrac{2^{d-k+1}(d+1)}{\epsilon} & k = 0 \\
        \nicefrac{2^{d-k}5(k+3)(d+1)}{\epsilon} & 1 \leq k \leq d - 1\,.

    \end{cases}$$
    Let $B_{3,k}$ be the event there are more than $M(k)$ $\delta^{1/2^{d-k}}$-coplanar $(k+1)$-tuples. The case $k = 0$ is handled by Lemma \ref{lemma:pairBound},
    which asserts $\mathbb{P}_{p}(B_{3,0}) = O(N^{-(d+1)}) = o(N^{-d}).$ For $1 \leq k \leq d-1,$ by the previous Lemma, 
    $$\mathbb{P}(B_{3,k}) = o\paren{\paren{\delta^{1/2^{d-k}}}^{\frac{5(k+3)(d+1)}{5(k+3)} \cdot \frac{2^{(d-k)}}{\epsilon}}} = o(\delta^{-(d+1)/\epsilon}) = o(N^{-d}).$$
    Moreover,
    \begin{align*}
        \mathbb{P}\paren{\bigcup_{k=0}^{d-1} B_{3,k}} &= \sum_{k=0}^{d-1}\mathbb{P}(B_{3,k}) \\
        &= \sum_{k=0}^{d-1} o(N^{-d}) \\
        &= o(N^{-d}).
    \end{align*}}  
\end{proof}

\begin{lemma}\label{lemma:shell}
If $N$ is sufficiently large and $z_{d+2}$ forms a $\delta$-unstable $(d+2)$-tuple with $z_1,\ldots,z_{d+1}$ then $z_{d+2}$ is contained in a spherical shell centered at the circumcenter of $z_1,\ldots,z_{d+1}$ with inner radius $r-\delta^{2/3}$ and outer radius $r+\delta^{2/3},$ where $r$ is the circumradius of $z_1,\ldots,z_{d+1}.$ 
\end{lemma}
\begin{proof}
Recall that $z_1,\ldots,z_{d+2}$ forms a $\delta$-unstable separated tuple if it has diameter at most $2\log{N}$, it contains no $\delta^{1/2}$-coplanar $(d+1)$-tuples, and there is a spherical shell $\mathcal{S}'$ with inner radius $\rho<\log\paren{N}$ and outer radius $\rho + \delta$ containing $z_1, \dots, z_{d+2}.$ Let $S$ be the circumsphere of $z_1,\ldots,z_{d+1}$ and suppose without loss of generality that it is centered at the origin and that $\mathcal{S}'$ is centered at at $\paren{a,0,\ldots,0}.$ We will show that $a$ is close to $0$ and that $\rho$ is close to $r$ using that $z_1,\ldots,z_{d+1}\in S,\mathcal{S}'.$ 

If $w=\paren{w_1,\ldots,w_d}\in S,\mathcal{S}'$ and $W= \sum_{i=2}^d w_i^2$ then
$$w_1^2+W=r^2$$
and
$$\rho^2\leq \paren{w_1-a}^2+W\leq \paren{\rho+\delta}^2$$
so
\begin{equation}
\label{eq:638}
\rho^2\leq a^2-2a w_1+r^2\leq  \paren{\rho+\delta}^2
\end{equation}
and
$$0\leq \frac{r^2+a^2-\rho^2}{2a}-w_1\leq  \frac{2\delta\rho+\delta^2}{2a}\,.$$
That is, $z_1,\ldots,z_{d+1}$ are within distance $\frac{2\delta\rho+\delta^2}{2a}$ of the hyperplane
$$x_1=\frac{r^2+a^2-\rho^2}{2a}\,.$$
Since $z_1,\ldots,z_{d+1}$ is not  $\delta^{1/2}$-coplanar, we must have that
$$\frac{2\delta\rho+\delta^2}{2a}>\delta^{1/2}\implies a<\delta^{1/2}\rho+\nicefrac{1}{2}\delta^{3/2}=O\paren{\log\paren{N}\delta^{1/2}}\,.$$
Rearranging Equation~\ref{eq:638} in a different fashion, we obtain
$$2a w_1-a^2\leq r^2 - \rho^2\leq 2\rho\delta +\delta^2+2a w_1-a^2$$
and $\abs{r^2-\rho^2}=O\paren{\log\paren{N}^2\delta^{1/2}}.$
Now, let $w=\paren{w_1,\ldots,w_d}\in\mathcal{S}'$ be a point which we no longer assume is in $S.$ By a similar computation as before,
\begin{align*}
&\rho^2\leq \paren{w_1-a}^2+W\leq \paren{\rho+\delta}^2 \\
&\implies\\
&r^2+\paren{\rho^2-r^2}\leq w_1^2-2aw_1+W\leq r^2 +\paren{\rho^2-r^2}+2\delta\rho+\delta^2\\
&\implies\\
&\paren{\rho^2-r^2}+2aw_1-w_1^2 \leq d\paren{0,w}^2-r^2\leq \paren{\rho^2-r^2}+2\delta\rho+\delta^2+2aw_1-w_1^2\\
&\implies \\
&\abs{d\paren{0,w}^2-r^2}=o\paren{\delta^{1/3}}
\end{align*}
and the desired conclusion follows.
\end{proof}

\begin{lemma}\label{lemma:tupleBound}
    Let $\Omega$ be a Borel set with $\|\Omega\| \geq 1$. Place $\xi := O((\log{N})^d)$ points i.i.d. uniformly in $\Omega.$ Let $Q_M$ be the event that there are more than $M$ points of $\Omega$ belonging to $\delta$-unstable separated $(d+2)$-tuples. Then, $\mathbb{P}(Q_M) =o(\delta^{M/(d+2)})$. 
\end{lemma}

\begin{proof}
    \new{We follow the same proof as Lemma \ref{lemma:coplanarBound}, but we replace $k+1$ with $d+2$, and modify the computation of the conditional probability which we write below.

    Let $C_J$ be the event that every point $z_j, j \in J$ forms a $\delta$-unstable $(d+2)$-tuple with points indexed by $I$. By the previous Lemma, the volume of the set $\set{x \mid x_1, \dots, x_{d+1}, x \text{ is a $\delta$-unstable separated tuple}}$ is $O(\delta^{2d/3}) = o(\delta).$
    So, the conditional probability of $C_J$ given every previously placed point is $$o\paren{\binom{|I|}{d+2}\delta}/\|\Omega\| = o(\delta)$$ by the uniform distribution of $z_j$, volume computation, and the hypothesis $\|\Omega\| \geq 1$. So, $$\mathbb{P}(C_J \, | \, I , \text{order on $I$}) = o(\delta^{|J|}) = o(\delta^{M/(d+2)})\,.$$ Repeating the computation from the previous proof yields the desired result.}
\end{proof}

\begin{corollary}
    \new{Let $B_4(\Lambda)$ be the event that $\Lambda$ contains more than $M = \frac{(d + 3)(d + 1)}{\epsilon}$ $\delta$-unstable $(d + 2)$ tuples. Then, $\mathbb{P}(B_4) = o(N^{-d})$.}
\end{corollary}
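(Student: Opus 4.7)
The plan is to mirror the proof of the immediately preceding corollary (for $B_2$), replacing Lemma \ref{lemma:pairBound} with the tuple analogue, Lemma \ref{lemma:tupleBound}. I read the statement as bounding the number of points of $\Lambda$ that belong to some $\delta$-unstable $(d+2)$-tuple, which is the quantity directly controlled by Lemma \ref{lemma:tupleBound} and parallels the counting convention used for $B_2$.

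First I would verify that Lemma \ref{lemma:tupleBound} applies to $\Omega = \Lambda$. Its volume is $\|\Lambda\| = (\log N)^d \geq 1$ for $N$ large, so the volume hypothesis holds. Conditional on $|\Lambda| = \xi$, the Poisson points in $\Lambda$ are i.i.d.\ uniform in $\Lambda$, so to apply the lemma I only need $\xi = O((\log N)^d)$. This is exactly the bound provided by the complement of $B_1(\Lambda)$ from Lemma \ref{lemma:chernoff}: outside of $B_1$, we have $\xi < 3 (\log N)^d$, and $\mathbb{P}(B_1) = o(N^{-d})$.

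The second step is the arithmetic. Taking $M = \frac{2(d+3)(d+1)}{\epsilon}$ in Lemma \ref{lemma:tupleBound} gives
\[
\mathbb{P}\bigl(B_3 \,\big|\, B_1^c\bigr) \;\leq\; o\bigl(\delta^{M/(2(d+3))}\bigr) \;=\; o\bigl(\delta^{(d+1)/\epsilon}\bigr) \;=\; o\bigl(N^{-(d+1)}\bigr) \;=\; o(N^{-d}),
\]
since $\delta = N^{-\epsilon}$. Conclude by total probability:
\[
\mathbb{P}(B_3) \;\leq\; \mathbb{P}\bigl(B_3 \,\big|\, B_1^c\bigr)\,\mathbb{P}(B_1^c) + \mathbb{P}(B_1) \;=\; o(N^{-d}) + o(N^{-d}) \;=\; o(N^{-d}).
\]

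There is no real technical obstacle: the whole point of having tuned the exponent $M/(2(d+3))$ in Lemma \ref{lemma:tupleBound} and the constant $M = \frac{2(d+3)(d+1)}{\epsilon}$ is so that the resulting power of $\delta$ is $o(N^{-d})$. The only thing to be careful about is the conditioning on $|\Lambda| = O((\log N)^d)$, which requires piggybacking on the Chernoff bound from Lemma \ref{lemma:chernoff}; this is identical to the step in the $B_2$ corollary.
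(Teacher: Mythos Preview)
Your proof is correct and follows essentially the same approach as the paper: apply Lemma~\ref{lemma:tupleBound} with $M = \frac{2(d+3)(d+1)}{\epsilon}$ on the event $B_1^c$ to get $\mathbb{P}(B_3 \mid B_1^c) = o(\delta^{(d+1)/\epsilon}) = o(N^{-d})$, then combine with $\mathbb{P}(B_1) = o(N^{-d})$ from Lemma~\ref{lemma:chernoff}. The paper's proof is a one-line version of exactly this; your additional verification of the hypotheses and the total-probability step simply make the argument explicit.
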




\begin{corollary}\label{corollary:badpointbound}
    Let $1 > \eta > 0$ be fixed. Let $G=G(\eta)$ be the event that \new{$|Z_\delta| = O(1)$ and $|\mathrm{Star}(Z_\delta \cap \Lambda)| = o(\log{N})$} for every cube $\Lambda$ of length $\eta \log{N}$. Then, $\mathbb{P}(G) = 1 - o(1)$.
\end{corollary}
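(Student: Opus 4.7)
The plan is to reduce Corollary \ref{corollary:badpointbound} to the per-cube estimates established in the corollaries to Lemmas \ref{lemma:pairBound} and \ref{lemma:tupleBound}, combined with the Delaunay maximum-degree bound of Corollary \ref{corollary:logarithmBound}, via a union bound over a fixed tiling of the torus.

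I would first tile $\mathbb{T}^d_N$ disjointly by $m = O((N/\log N)^d)$ cubes of side $\log N$. The two preceding corollaries supply a constant $C_0 = C_0(\epsilon, d)$ such that, for any fixed tile $\Lambda_i$, the probability that $|(Z_\delta \cup Z_T) \cap \Lambda_i|$ exceeds $C_0$ is $o(N^{-d})$. Since $m = o(N^d)$, a union bound produces an event $E_1$ of probability $1 - o(1)$ on which every tile satisfies $|(Z_\delta \cup Z_T) \cap \Lambda_i| \leq C_0$. Any cube $\Lambda$ of side $\eta \log N$ (with $\eta \leq 1$) is contained in the union of at most $2^d$ of these tiles, so on $E_1$ every such cube contains at most $2^d C_0 = O(1)$ unstable points, giving the first half of the definition of $G(\eta)$.

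For the star size, I would invoke Corollary \ref{corollary:logarithmBound}, which provides an event $E_2$ of probability $1 - o(1)$ on which the maximum Delaunay degree $\Delta$ satisfies $\Delta = o(\log N)$. On $E_1 \cap E_2$, each of the at most $2^d C_0$ unstable points in any cube $\Lambda$ of side $\eta \log N$ contributes at most $\Delta + 1 = o(\log N)$ vertices to its star, so
\[|\mathrm{Star}((Z_\delta \cup Z_T) \cap \Lambda)| \leq 2^d C_0 (\Delta + 1) = o(\log N).\]
Since $\mathbb{P}(E_1 \cap E_2) = 1 - o(1)$, both conditions defining $G(\eta)$ hold simultaneously with high probability.

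I do not expect a serious obstacle: the substantive work has already been done in the preceding lemmas. The only genuine step is the combinatorial observation that any cube of side $\eta \log N$ meets at most $2^d$ tiles of a disjoint tiling of $\mathbb{T}^d_N$ by cubes of side $\log N$, which lets the per-tile bounds transfer to arbitrary cubes of side $\eta \log N$ at the cost of a harmless multiplicative constant.
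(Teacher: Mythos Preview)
Your proposal is correct and follows essentially the same route as the paper: a union bound over a covering by $\log N$-cubes using the per-cube estimates (the paper's events $B_2,B_3$) to control $|(Z_\delta\cup Z_T)\cap\Lambda|$, followed by the maximum-degree bound of Corollary~\ref{corollary:logarithmBound} for the star size. The only cosmetic difference is that the paper uses an overlapping covering so that each small cube lies inside a single $\log N$-cube, whereas you use a disjoint tiling and pay a harmless $2^d$ factor; both are equivalent.
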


\begin{proof}
    Let $\Lambda'$ be a cube of length $\log{N}$. If $B_2(\Lambda')$, $B_{3}(\Lambda')$, and $B_4(\Lambda')$ do not occur, then $|Z_\delta \cap \Lambda'|=O\paren{1}.$ Moreover, we may cover $\T^d_N$ with $o(N^d)$ cubes of length $\log{N}$ so that every cube $\Lambda$ of length $\eta \log{N}$ sits inside a cube $\Lambda'$ from the covering. By the union bound, the probability that $B_2(\Lambda)^c \cap B_{3}(\Lambda)^c \cap B_4(\Lambda)^c$ occurs for all $\Lambda$ in the covering is $1 - O(N^d) \cdot o(N^{-d}) = 1 - o(1)$.
    
    Since each cube $\Lambda$ of length $\eta \log{N}$ is contained in at least one cube $\Lambda'$ we may conclude that  $|Z_\delta \cap \Lambda| =O(1)$ with high probability. Lastly, by Corollary \ref{corollary:logarithmBound}, with probability $1 - o(1)$, every vertex of $Z$ has degree $o(\log{N})$. Thus,
    \[|\mathrm{Star}(Z_\delta \cap \Lambda)| = o(\log{N})\]
    with high probability.
\end{proof}

We are now ready to prove Lemma \ref{lemma:clusterBound}.  

\begin{proof}[Proof of Lemma \ref{lemma:clusterBound}]
    \new{Let $1 < \eta < 0.$ Fix a cube $\Lambda$ of length $\eta \log{N}$ and center it at some $z \in C$. Suppose that $G\paren{\eta}$ occurs --- that is that $|Z_\delta \cap \Lambda| = O(1)$ --- and that for all $z \in Z$, $\mathrm{Star}(z)$ has diameter at most $2 \rho \sqrt[d]{\log{N}}.$ By Corollary~\ref{corollary:badpointbound} and Lemma~\ref{lemma:emptyDisc} these events occur with probability $1-o(1).$ Since $C$ is a maximal component of $\mathrm{Star}(Z_\delta)$, the diameter of $C\cap \Lambda$ is at most the diameter of $\mathrm{Star}(Z_\delta\cap \Lambda)$ which is $O(1) \cdot 2\rho \sqrt[d]{\log{N}}$. Therefore, $C \subset \Lambda$ when $N$ is sufficiently large, as $\Lambda$ has length $\eta \log{N} >> 2 \rho \sqrt[d]{\log{N}} \cdot O(1)$. Thus $\abs{C}\leq |\mathrm{Star}(Z_\delta) \cap \Lambda| = o(\log{N})$ with high probability.}
\end{proof}

\section{Topology of Voronoi Tessellations}\label{section:voronoitopology}
We investigate the consequences of the Delaunay stability theorems of \cite{bonnet2018maximaldegreepoissondelaunaygraph} for Voronoi tessellations. As such, the results of this section are stated deterministically under certain hypotheses that will, in later arguments, hold with high probability. 

\subsection{Background}
\begin{definition}
    Let $Z \subset \mathbb{T}_N^d$ be a finite point set. Let $Q \subset Z$ and $z \in Z$. We define
    \begin{enumerate}
        \item the Voronoi cell of $z$,
        \[U(z, Z) = \{x \in \mathbb{T}_N^d : d(x, z) \leq d(x, z') \hspace{0.2cm} \forall z' \in Z\}.\]
        \item the cell complex of Voronoi cells generated by $Q$ in $Z$ as $V(Q,Z)$.
        \item the union of Voronoi cells generated by $Q$ in $Z$
        \[U(Q,Z) := \bigcup_{q \in Q} U(q, Z)\] 
        \item the interior site boundary of $Q$
        \[\partial Q := \{q \in Q : \mathrm{Star}(q) \not\subset Q\}\]
        \item the interior of $Q$
        \[\mathrm{Int} \, Q = Q \setminus \partial Q\]
    \end{enumerate}
\end{definition}

In the previous section, we discussed the asymptotics of bad geometries, e.g $\delta$-bad points and clusters. Here, we introduce our notions of good geometry.

\begin{definition}[$\delta$-good and $\delta$-bubble wrapped]
    If $z \in Z$ is not $\delta$-bad, then we call it \textbf{$\delta$-good}. Given a set $Q\subset Z$, we say $Q$ is \textbf{$\delta$-bubble wrapped} if $\mathrm{Star}(Q) \setminus Q$ is $\delta$-good. 
\end{definition}

The utility of bubble wrap, as we will see, is that its topology is stable to reasonably small perturbations. And, just like real-world bubble wrap, while the fragile interior may break, the bubble wrap remains! The goal of this section is to prove the following Theorem.

\begin{theorem}\label{theorem:perturbwrap}
Let $Z \subset \T_N^d$ in general position and let $Q\subset Z$ be $\delta$-bubble wrapped. Let $\zeta$ be a $\delta^5$-perturbation and let $C$ be the union of all $\delta$-bad points of $Z$. If $Z \setminus C$ is a $\log{N}$-sample set, then $\zeta|_{Q\setminus \mathrm{Int}\,C}$ can be extended to a homeomorphism $\bar{\zeta}:\mathbb{T}^d\to\mathbb{T}^d$ so that $\bar{\zeta}\paren{U\paren{Q,Z}}=U\paren{\zeta\paren{Q},\zeta\paren{Z}}\,.$
\end{theorem}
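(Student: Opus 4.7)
The plan is to construct $\bar\zeta$ in two stages. First, call a Delaunay simplex $\sigma\in\mathrm{Del}(Z)$ \emph{good} if all its vertices are $\delta$-good. Because a $\delta$-good vertex lies in the star of no $\delta$-unstable simplex, every good simplex is itself $\delta$-stable, and since $\zeta$ is a $\delta^5$-perturbation with $\delta^5\ll\delta$, $\zeta(\sigma)$ is a non-degenerate Delaunay simplex of $\mathrm{Del}(\zeta(Z))$ spanned by the images of the vertices of $\sigma$. Let $G\subset\T^d_N$ be the union of good closed simplices. Define $\bar\zeta|_G$ to be $\zeta$ on vertices and affine on each simplex; this prescription is consistent across shared faces, continuous, and maps $G$ homeomorphically onto the analogous region $G'\subset\T^d_N$ built from $\zeta(Z)$.

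The complement $\T^d_N\setminus G$ decomposes into \emph{bad blobs}, one per bad cluster $C_0$: the union of open Delaunay simplices having at least one vertex in $C_0$. Two key observations. First, the bubble-wrap condition on $Q$ forbids any bad point of $Z\setminus Q$ from being Delaunay-adjacent to a point of $Q$, so each bad cluster lies entirely inside $Q$ or entirely outside $Q$. Second, Lemma \ref{lemma:clusterBound} bounds cluster diameters by $\log N$ and Lemma \ref{lemma:emptyDisc} bounds Delaunay circumradii by $\rho\sqrt[d]{\log N}$, so each blob has diameter $O(\sqrt[d]{\log N}\,\log N)\ll N$; combined with the $\log N$-sample hypothesis on $Z\setminus C$ (which provides a good-simplex collar around each blob and rules out wraparound on the torus), each blob is a closed topological $d$-ball whose bounding sphere lies in $G$, and the image blob under the analogous construction for $\zeta(Z)$ is also a closed ball bounded by $\bar\zeta$ of this sphere. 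An Alexander-trick coning argument extends $\bar\zeta$ across each blob as a homeomorphism of balls; we arrange the extension so it sends each vertex $z\in\partial C\cap Q$ on the bounding sphere to $\zeta(z)$. The resulting $\bar\zeta$ is a homeomorphism of $\T^d_N$ agreeing with $\zeta$ on $Q\setminus\Int C$.

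To verify $\bar\zeta(U(Q,Z))=U(\zeta(Q),\zeta(Z))$, I would use duality together with the case analysis above. Outside the bad blobs, combinatorial preservation of good simplices carries each restricted Voronoi cell $V(z)\cap G$ onto $V(\zeta(z))\cap G'$, so the portion of $U(Q,Z)$ sitting in $G$ is mapped correctly. Inside a bad blob, every boundary Voronoi face of $U(Q,Z)$ is dual to a Delaunay edge $(q,z)$ with $q\in Q$ and $z\in\mathrm{Star}(Q)\setminus Q$; the bubble-wrap hypothesis forces $z$ to be good, so the endpoints of such a face lie on the bounding sphere where $\bar\zeta$ is already determined by Stage 1. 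This means the two regions $U(Q,Z)\cap\text{blob}$ and $U(\zeta(Q),\zeta(Z))\cap\text{image blob}$ have matched boundary, and one can choose the Alexander extension on each blob to be a homeomorphism of pairs, yielding both containments.

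The principal obstacle I anticipate is the mismatch between the Delaunay simplicial picture (on which the perturbation acts naturally and the two-stage construction is clean) and the Voronoi cell picture (in which the conclusion is phrased): a bad blob is Delaunay-defined, and its intersection with $U(Q,Z)$ is a Voronoi-defined region whose boundary need not be a union of whole simplex faces of the blob. The cleanest workaround is probably to pass to a common PL subdivision of the two complexes -- e.g.\ the mixed cells $V(z)\cap\sigma$ for $z\in\sigma$ -- redo the good/bad dichotomy at that finer level, and build the blob extensions slice-by-slice relative to $\partial U(Q,Z)$. The extra slack of $\delta^5$ versus $\delta$ is exactly what allows all of this refinement to be carried out.
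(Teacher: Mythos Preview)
Your approach has two genuine gaps. First, the claim that each bad blob is a closed $d$-ball is not justified: the closed Delaunay star of a connected vertex set $C_0$ is a regular neighborhood of the subcomplex spanned by $C_0$, and this is a ball only when that subcomplex is collapsible. Nothing in the hypotheses prevents $C_0$ from containing a cycle in the Delaunay $1$-skeleton, in which case the blob can be a solid torus rather than a ball, and the Alexander coning fails. (You also invoke Lemma~\ref{lemma:clusterBound} and Lemma~\ref{lemma:emptyDisc}, which are probabilistic statements, inside a deterministic theorem whose only structural hypothesis is that $Z\setminus C$ is a $\log N$-sample set.) Second, even if the blobs were balls, the step ``choose the Alexander extension to be a homeomorphism of pairs'' is circular: it requires knowing that $U(Q,Z)\cap\text{blob}$ and $U(\zeta(Q),\zeta(Z))\cap\text{image blob}$ are already homeomorphic rel boundary, which is essentially the conclusion you are trying to prove. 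Your proposed workaround via mixed cells does not resolve this, since the mixed subdivision inside a bad blob still changes combinatorially under $\zeta$.

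The paper sidesteps both issues with one observation: by Corollary~\ref{cor:interiorDiscard}, deleting the points of $\mathrm{Int}\,C$ from both $Q$ and $Z$ leaves $U(Q,Z)$ unchanged, and every $\delta$-unstable point lies in $\mathrm{Int}\,C$ (its entire star is bad by definition). After this deletion all remaining points are $\delta$-good, so Proposition~\ref{proposition:secureperturbation} gives a combinatorial isomorphism of the \emph{entire} Voronoi diagrams $V(Z\setminus\mathrm{Int}\,C)$ and $V(\zeta(Z)\setminus\mathrm{Int}\,\zeta(C))$, which then extends to a torus homeomorphism by the cell-by-cell induction on skeleta. The bubble-wrap hypothesis enters only through Lemma~\ref{lemma:invariantBoundary} to ensure $\zeta(\mathrm{Int}\,C)=\mathrm{Int}\,\zeta(C)$. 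No ball-recognition or extension-of-pairs argument is needed.
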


To prove this, we apply a special case of Theorem 4.14 from \cite{boissonnat2013stability}, which we have reformatted using our notation.

\begin{definition}[Combinatorial isomorphism]   Given two (regular) cell complexes $Q_1, Q_2$, we say that a bijection $\zeta$ between the vertices of $Q_1$ and the vertices of $Q_2$ induces \textbf{combinatorial isomorphism} from $Q_1$ to $Q_2$ if $v_1, \dots, v_k$ are vertices of an $i$-dimensional face of $Q_1$ if and only if $\zeta(v_1), \dots, \zeta(v_k)$ are vertices of an $i$-dimensional face in $Q_2.$
\end{definition}

\begin{proposition}{\label{proposition:secureperturbation}}
    Let $Z \subset \mathbb{T}^d_N$ be in general position, $Q \subset Z,$ and $C$ the set of all $\delta$-bad points of $Z$. Further suppose $Z \setminus C$ is a $\log{N}$-sample set. If $Q$ is $\delta$-good and $\zeta : Z \to Z'$ is a $\delta^{5}$ perturbation, then $\zeta$ induces a combinatorial isomorphism between $\mathrm{Star}(Q ; \mathrm{Del}(Z))$ and $ \mathrm{Star}(\zeta(Q) ; \mathrm{Del}(Z'))$. 
\end{proposition}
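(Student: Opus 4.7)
The plan is to reduce the statement to a direct application of the Delaunay stability theorem of \cite{boissonnat2013stability} (the cited Theorem 4.14), after checking that every simplex in $\mathrm{Star}(Q;\mathrm{Del}(Z))$ satisfies the stability condition required there. The technical content is thus to translate the combinatorial hypothesis ``every $q\in Q$ is $\delta$-good'' into the geometric notion of $\delta$-stable simplex used in \cite{boissonnat2013stability}, and to use the sample hypothesis to control circumradii.

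First I would fix $q\in Q$ and a Delaunay simplex $\sigma\in\mathrm{Star}(q;\mathrm{Del}(Z))$ with circumscribing ball $B(\sigma)$ of radius $r$. Using that $Z\setminus C$ is a $\log N$-sample set, I would show $r\leq\log N$: the center of $B(\sigma)$ sees no point of $Z$ inside the open ball, hence no point of $Z\setminus C$ inside, so by the sampling bound $r\leq\log N$.

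Next, suppose for contradiction that $\sigma$ is unstable, i.e.\ some $q'\in Z\setminus\sigma$ has $\mathrm{dist}(q',\partial B(\sigma))<\delta$. Split into two cases. If $q'$ is $\delta$-close to some vertex of $\sigma$, then $q$ lies in the star of a point belonging to a $\delta$-close pair, so $q$ is $\delta$-bad, contradicting $\delta$-goodness. Otherwise, no pair among $\mathrm{vertices}(\sigma)\cup\{q'\}$ is $\delta$-close; since the $d+1$ vertices of $\sigma$ lie on $\partial B(\sigma)$ and $q'$ lies within $\delta$ of $\partial B(\sigma)$, these $d+2$ points fit in a spherical shell of width $\delta$ and inner radius at most $\log N$. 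This is exactly a $\delta$-unstable $(d+2)$-tuple, placing $q$ in the star of a $\delta$-unstable point and again contradicting goodness. Therefore every simplex in $\mathrm{Star}(Q;\mathrm{Del}(Z))$ is $\delta$-stable.

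Having verified $\delta$-stability, I would invoke Theorem~4.14 of \cite{boissonnat2013stability}, which asserts that $\delta$-stable simplices are preserved under perturbations that are a polynomial factor smaller than $\delta$; the hypothesis of a $\delta^5$-perturbation is chosen to comfortably dominate the threshold of their theorem. Each $\sigma\in\mathrm{Star}(Q;\mathrm{Del}(Z))$ therefore maps to $\zeta(\sigma)\in\mathrm{Del}(Z')$, and applying the same argument in the reverse direction (using that $\zeta^{-1}$ is also a small perturbation, and that stars of $\zeta(Q)$ in $\mathrm{Del}(Z')$ inherit the same stability) rules out spurious new simplices. The combinatorial isomorphism follows. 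The step I expect to require the most care is bookkeeping: matching our $\delta$-instability definition (built from close pairs and shell tuples) to the ``thickness/protection'' parameters of \cite{boissonnat2013stability}, and confirming that the exponent $5$ is safely above what their theorem demands. The topology is straightforward once this quantitative alignment is in place.
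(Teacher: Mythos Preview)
Your proposal is correct and follows essentially the same route as the paper: both reduce to Theorem~4.14 (together with Lemma~4.10) of \cite{boissonnat2013stability} after verifying that simplices in $\mathrm{Star}(Q;\mathrm{Del}(Z))$ are not $\delta$-unstable. Your writeup spells out the case analysis (close pair versus unstable $(d+2)$-tuple, with the $\log N$-sample hypothesis controlling circumradii) that the paper compresses into a single clause.
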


\begin{proof}
This follows from combining Theorem 4.14 with Lemma 4.10 of \cite{boissonnat2013stability}, using that every simplex of $\mathrm{Del}(Z \setminus Q)$ is not $\delta$-unstable.
\end{proof}

We review our notation here. We call a point $\delta$-unstable if it belongs to either a
$\delta$-coplanar tuples or a separated $\delta$-unstable tuples. 
Points adjacent to $\delta$-unstable points in the Delaunay triangulation are called $\delta$-bad; all other points are $\delta$-good. 

In a nutshell, good points are good, bad points are bad, and unstable points are worse. The previous section (Lemma \ref{lemma:clusterBound}) revealed that unstable points are asymptotically few and far between but bad points are non-negligible. As unstable points induce bad points, deleting unstable points reveals a set of entirely $\delta$-good points. 

\subsection{Stability of Voronoi cells}

\begin{lemma}
    Let $q \in \mathrm{Int}\,Q$. Then, 
    \[U(Q, Z) = U(Q \setminus \{q\}, Z \setminus \{q\}).\]
    Moreover, the boundary does not change. That is, $\partial Q = \partial (Q \setminus \{q\})$.
\end{lemma}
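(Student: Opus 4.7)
The central observation is that $q\in\Int Q$ forces $\mathrm{Star}(q)\subset Q$, so every Voronoi (equivalently Delaunay) neighbor of $q$ lies in $Q$. Geometrically, deleting $q$ from $Z$ only redistributes $V(q,Z)$ among its former Voronoi neighbors, all of which remain in $Q\setminus\{q\}$; this picture should immediately yield both conclusions.

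For the set equality I would argue both inclusions pointwise. For $U(Q,Z)\subseteq U(Q\setminus\{q\},Z\setminus\{q\})$: pick $x\in U(Q,Z)$. If $x\in V(q',Z)$ with $q'\in Q\setminus\{q\}$, then $x\in V(q',Z\setminus\{q\})$ since removing a point can only enlarge surviving Voronoi cells. Otherwise $x\in V(q,Z)$; letting $q'$ be the nearest point to $x$ in $Z\setminus\{q\}$, we have $x\in V(q',Z\setminus\{q\})\cap V(q,Z)$, so either $x$ lies on the common boundary $V(q',Z)\cap V(q,Z)$ or $V(q',Z\setminus\{q\})\setminus V(q',Z)$ is a nonempty subset of $V(q,Z)$. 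In both cases $q'$ is a Voronoi neighbor of $q$ and hence $q'\in\mathrm{Star}(q)\setminus\{q\}\subset Q\setminus\{q\}$, so $x\in U(Q\setminus\{q\},Z\setminus\{q\})$. The reverse inclusion is easier: if $x\in V(q',Z\setminus\{q\})$ with $q'\in Q\setminus\{q\}$, then the nearest point to $x$ in the full set $Z$ is either $q'$ or $q$, so $x\in V(q',Z)\cup V(q,Z)\subseteq U(Q,Z)$.

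For the boundary equality I would appeal to the locality of the Delaunay complex under single-vertex deletion: the only simplices of $\mathrm{Del}(Z)$ that disappear in passing to $\mathrm{Del}(Z\setminus\{q\})$ are those containing $q$, and any new simplex in $\mathrm{Del}(Z\setminus\{q\})\setminus \mathrm{Del}(Z)$ has all its vertices among the Delaunay neighbors of $q$. Since those neighbors lie in $\mathrm{Star}(q)\setminus\{q\}\subset Q\setminus\{q\}$, for any $q'\in Q\setminus\{q\}$
\[
\mathrm{Star}_{Z\setminus\{q\}}(q')\setminus(Q\setminus\{q\})=\mathrm{Star}_{Z}(q')\setminus Q,
\]
and hence $q'\in\partial Q$ iff $q'\in\partial(Q\setminus\{q\})$. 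Since $q$ itself is absent from $Q\setminus\{q\}$ and was interior to $Q$, this gives $\partial Q=\partial(Q\setminus\{q\})$.

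The main obstacle is the locality claim underlying the boundary step: that retriangulating the hole left by removing $q$ only creates simplices among the former neighbors of $q$. From the Voronoi side this is transparent because $V(q,Z)$ is partitioned among the cells of its Voronoi neighbors, so any newly introduced Voronoi face in $V(Z\setminus\{q\})$ lies between two such neighbors; a clean written argument should invoke general position together with the explicit nerve-type duality reviewed earlier in the paper.
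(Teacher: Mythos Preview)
Your proposal is correct. For the set equality $U(Q,Z)=U(Q\setminus\{q\},Z\setminus\{q\})$, your argument is essentially the paper's: the paper also reduces to showing that whenever $x\in V(q,Z)$ and $x\in V(y,Z\setminus\{q\})$ one has $y\in\mathrm{Star}(q)$. The paper makes your Case~2 inference explicit by taking the segment from $x$ to $y$, using convexity of $V(y,Z\setminus\{q\})$ to keep the segment inside that cell, and locating a point $v$ on it equidistant from $q$ and $y$; this witnesses $V(q,Z)\cap V(y,Z)\neq\varnothing$. You assert this step (``in both cases $q'$ is a Voronoi neighbor of $q$'') without the one-line convexity argument, but it is exactly the same idea. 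You are also more careful than the paper in writing out the reverse inclusion.

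For $\partial Q=\partial(Q\setminus\{q\})$ the two proofs diverge. You argue combinatorially via Delaunay locality: deleting $q$ destroys only simplices containing $q$ and creates only simplices whose vertices are Delaunay neighbors of $q$ (true, via the shrinking-ball argument you allude to), whence $\mathrm{Star}_{Z\setminus\{q\}}(q')\setminus(Q\setminus\{q\})=\mathrm{Star}_Z(q')\setminus Q$. The paper instead stays on the Voronoi side, using the characterization $y\in\Int Z'\iff U(y,Z')\subset\Int U(Z',Z)$ together with the already-proved set equality and the inclusion $U(q',Z\setminus\{q\})\subset U(q',Z)\cup U(q,Z)$ (which is precisely your reverse-inclusion observation). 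The paper's route is shorter once the first part is in hand, since it avoids the retriangulation-locality lemma; your route is more self-contained combinatorially and makes the Delaunay picture explicit, at the cost of proving that auxiliary fact.
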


\begin{proof}
    $U(Q \setminus \{q\}, Z) \subset U(Q \setminus \{q\}, Z \setminus \{q\})$. Let $x \in \mathrm{Int} \, V(q, Z)$ and suppose $x \in V(y,Z \setminus \{q\})$. Since $\mathrm{dist}(x,q) < \mathrm{dist}(x,y)$, there exists a point $v$ on the line segment $L$ between $x$ and $y$ that is equidistant to $y$ and $q$. $V(q, Z \setminus \{q\})$ is convex so $L \subset \mathrm{Int} \, V(z, Z \setminus \{q\})$ and $\mathrm{dist}(v,q) = \mathrm{dist}(v,y) < \mathrm{dist}(v, Z \setminus \{y,q\})$. Thus, $v \in V(y, Z) \cap V(q, Z)$ so $y \in \mathrm{Star}(q)$ which implies $y \in Q$ and $x \in U(Q \setminus \{q\}, Z \setminus \{q\})$.

    The second statement directly follows from the observations that for $q'\in Q\setminus \set{q}$ $U\paren{q',Z\setminus q}\subset U\paren{q',Z}\cup U\paren{q,Z}$ and that
    $y \in \mathrm{Int} \, Z' \iff U(y, Z') \subset \mathrm{Int} \, U(Z', Z)$  for any $Z'\subset Z$ and $z\in Z',$ where we are using two different notions of interior. 
\end{proof}

If we iteratively throw away every interior point, the union of Voronoi cells does not change! 

\begin{corollary}\label{cor:interiorDiscard}
    \new{A union of Voronoi cells is invariant under discarding its interior from the point set. That is,}
    \[U(Q,Z) = U(\partial Q, Z \setminus{\mathrm{Int}\,Q}).\]
\end{corollary}

\begin{lemma}\label{lemma:combisoOnVoronoi}
    Let $\zeta : Z \to Z'$ be a $\delta^5$-perturbation where both $Z$ and $Z'$ are in general position, let $C$ be the set of $\delta$-bad points of $Z,$ and assume that $Z \setminus C$ is a $\log{N}$-sample set. Then the restriction $\zeta\mid_{Z\setminus \mathrm{Int} \, C}$ induces a combinatorial isomorphism between the Voronoi diagrams $V\paren{Z \setminus \mathrm{Int} \, C ; Z \setminus \mathrm{Int} \, C}$ and  $V\paren{Z' \setminus \mathrm{Int} \, \zeta\paren{C} ; Z' \setminus\mathrm{Int} \, \zeta\paren{C}}$.
\end{lemma}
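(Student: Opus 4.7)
My plan is to reduce the claim to Proposition \ref{proposition:secureperturbation} via Nerve duality, then show that the relevant Delaunay stars computed in $\mathrm{Del}(Z)$ and in $\mathrm{Del}(\tilde Z)$, where $\tilde Z := Z \setminus \mathrm{Int}\, C$, agree as subcomplexes so that the combinatorial isomorphism of stars transfers to the Voronoi diagrams of $\tilde Z$ and $\tilde Z' := \zeta(\tilde Z)$.

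First I would take $Q = Z \setminus C$. By definition of $C$ this set contains no $\delta$-bad points, and $Z \setminus C$ is a $\log N$-sample set by hypothesis, so Proposition \ref{proposition:secureperturbation} applies and yields a combinatorial isomorphism from $\mathrm{Star}(Q; \mathrm{Del}(Z))$ to $\mathrm{Star}(\zeta(Q); \mathrm{Del}(Z'))$ induced by $\zeta$; since every point of $\mathrm{Int}\, C$ has all its Delaunay neighbors inside $C$, the vertex set of each star is exactly $\tilde Z$ (respectively $\tilde Z'$). Next I would apply Corollary \ref{cor:interiorDiscard} with $Q = C$ to obtain $U(C, Z) = U(\partial C, \tilde Z)$; taking complements, the union of Voronoi cells of $Z \setminus C$ computed in $V(Z)$ and in $V(\tilde Z)$ coincide, and by the uniqueness of Voronoi cell labeling this implies $V(v, Z) = V(v, \tilde Z)$ for every $v \in Z \setminus C$. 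Dualizing, $\mathrm{Star}(Z \setminus C; \mathrm{Del}(Z)) = \mathrm{Star}(Z \setminus C; \mathrm{Del}(\tilde Z))$, with the same identity on the primed side. Finally, applying the Nerve theorem (as invoked in Lemma \ref{lemma:deformVorDel}) translates the isomorphism of Delaunay stars into the desired combinatorial isomorphism of the corresponding Voronoi subcomplexes via $\zeta|_{\tilde Z}$.

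The main obstacle will be accounting for simplices of $\mathrm{Del}(\tilde Z)$ whose vertices lie entirely in $\partial C$: these can appear when removing $\mathrm{Int}\, C$ allows the ``grown'' cells of $\partial C$ to acquire new adjacencies inside a single bad cluster once its interior has been absorbed. To control these I would use Lemma \ref{lemma:clusterBound}: each bad cluster has diameter at most $\log N$ and contains only $O(1)$ unstable points, so the combinatorics inside is of bounded complexity. Since the earlier steps already match the combinatorics on the boundary of each cluster, and the $\delta^5$-perturbation is far smaller than the $\log N$-scale controlling cluster geometry, the local adjacencies interior to each cluster are forced by the matched boundary data, completing the full combinatorial isomorphism.
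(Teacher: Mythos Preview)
Your reduction via Proposition~\ref{proposition:secureperturbation} and Nerve duality is the right framework, and the first three steps are essentially sound. The gap is in your handling of the ``obstacle.'' The simplices of $\mathrm{Del}(\tilde Z)$ with all vertices in $\partial C$ are genuinely not covered by your application of Proposition~\ref{proposition:secureperturbation} with $Q=Z\setminus C$ in the ambient set $Z$, and your proposed fix does not close the gap. First, Lemma~\ref{lemma:clusterBound} is a probabilistic statement about a Poisson process, whereas the present lemma is deterministic; invoking it here is a category error. Second, the assertion that ``local adjacencies interior to each cluster are forced by the matched boundary data'' is unsubstantiated: knowing the Delaunay combinatorics on $\mathrm{Star}(Z\setminus C)$ does not in general determine the Delaunay combinatorics among the remaining $\partial C$ vertices after deleting $\mathrm{Int}\,C$. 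The points of $\partial C$ are $\delta$-bad in $Z$, and nothing in your argument shows that simplices among them are stable under a $\delta^5$-perturbation.

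The paper sidesteps this obstacle entirely by applying Proposition~\ref{proposition:secureperturbation} with a different ambient set: it takes the ambient point set to be $\tilde Z = Z\setminus \mathrm{Int}\,C$ itself and $Q=\tilde Z$. The key observation is that every $\delta$-unstable point of $Z$ lies in $\mathrm{Int}\,C$ (its entire star is $\delta$-bad, hence contained in $C$), and deleting points cannot create new $\delta$-close pairs or $\delta$-unstable $(d+2)$-tuples. Hence $\tilde Z$ contains no $\delta$-unstable points \emph{relative to $\tilde Z$}, so every point of $\tilde Z$ is $\delta$-good in $\tilde Z$. Since $\mathrm{Star}(\tilde Z;\mathrm{Del}(\tilde Z))=\mathrm{Del}(\tilde Z)$, the proposition gives a combinatorial isomorphism $\mathrm{Del}(\tilde Z)\cong \mathrm{Del}(\tilde Z')$ directly, and duality transfers it to the Voronoi diagrams. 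With this choice there are no leftover $\partial C$ simplices to account for.
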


\begin{proof}
    By Proposition \ref{proposition:secureperturbation}, every point in $Z\setminus \mathrm{Int} \, C$ is $\delta$-good relative to the point set $Z \setminus \mathrm{Int} \, C$. This induces a combinatorial isomorphisms on the level of Delaunay triangulations,
    \[\mathrm{Del}(Z \setminus \mathrm{Int} \, C; Z \setminus \mathrm{Int}\, C) \cong \mathrm{Del}(\zeta(Z \setminus \mathrm{Int} \, C); \zeta(Z \setminus \mathrm{Int \, C})),\]
    where we used that $\mathrm{Star}(\hat{Z};\hat{Z}) = \mathrm{Del}(\hat{Z};\hat{Z})$ for any point set $\hat{Z}.$ 
    Furthermore, a combinatorial isomorphism of Delaunay triangulations induces a combinatorial isomorphism of the corresponding Voronoi mosaics by their duality relationship. That is,
    \[V(Z \setminus \mathrm{Int}\,  C ; Z \setminus \mathrm{Int} \, C) \cong V(\zeta(Z \setminus \mathrm{Int}\,  C); \zeta(Z \setminus \mathrm{Int} \, C)).\]
\end{proof}

\begin{lemma}\label{lemma:invariantBoundary}
Assume the hypotheses of the previous lemma, and let $Q \subset Z$ be $\delta$-bubble wrapped. Then the boundary of $Q$ is invariant under $\zeta$: 
  \[\partial \zeta(Q) = \zeta\paren{\partial(Q)}\,.\]
\end{lemma}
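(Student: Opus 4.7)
The plan is to leverage the bubble-wrap hypothesis to reduce each direction of the equality to Proposition~\ref{proposition:secureperturbation} applied to carefully chosen $\delta$-good singletons. Two structural consequences of bubble-wrap do most of the work. First, $\partial Q \subseteq Z \setminus \mathrm{Int}\,C$: if $q \in Q \cap \mathrm{Int}\,C$ then every Delaunay neighbor of $q$ is $\delta$-bad and lies in $\mathrm{Star}(Q)$, so bubble-wrap forces each such neighbor into $Q$, placing $q$ in $\mathrm{Int}\,Q$. Second, every $\delta$-bad cluster is either entirely contained in $Q$ or entirely disjoint from $\mathrm{Star}(Q)$: once one cluster point sits in $\mathrm{Star}(Q)$ it is bad and thus in $Q$, and propagating through Delaunay adjacencies internal to the cluster extends the containment to the whole component.

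For the forward inclusion $\zeta(\partial Q) \subseteq \partial\zeta(Q)$, I pick a Delaunay neighbor $z \in Z \setminus Q$ of $q \in \partial Q$. Bubble-wrap guarantees $z \in Z \setminus C$, so $\{z\}$ is $\delta$-good and Proposition~\ref{proposition:secureperturbation} preserves its Delaunay star under $\zeta$; hence $\{\zeta(q),\zeta(z)\}$ is an edge of $\mathrm{Del}(Z')$ with $\zeta(z) \notin \zeta(Q)$, giving $\zeta(q) \in \partial\zeta(Q)$.

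For the reverse inclusion I start with $\zeta(q) \in \partial\zeta(Q)$ and a Delaunay neighbor $\zeta(z) \in Z' \setminus \zeta(Q)$, so $z \notin Q$. If $z \in Z \setminus C$, Proposition~\ref{proposition:secureperturbation} applied to $\{z\}$ pulls the edge back and yields $q \in \partial Q$. Otherwise $z \in C$, and the cluster observation forces $z$'s cluster $C_0$ disjoint from $\mathrm{Star}(Q)$ in $\mathrm{Del}(Z)$. Consider the Delaunay $d$-simplex $\sigma' = \{\zeta(q),\zeta(z),\zeta(v_2),\dots,\zeta(v_d)\}$ of $\mathrm{Del}(Z')$ containing this edge. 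If any $v_i \in Z \setminus C$, the Proposition applied to $\{v_i\}$ recovers $\sigma \in \mathrm{Del}(Z)$, producing a Delaunay edge between $q \in Q$ and $z \in C_0$ that contradicts cluster separation. When all $v_i$ are bad but lie in $\partial C$, Lemma~\ref{lemma:combisoOnVoronoi} places the preimage in $\mathrm{Del}(Z \setminus \mathrm{Int}\,C)$, so $\{q,z\}$ is an edge of the reduced Delaunay; if this edge is broken in $\mathrm{Del}(Z)$ by a blocking vertex $w \in \mathrm{Int}\,C$, bubble-wrap forces $w \in Q$ and therefore $z \in \mathrm{Star}(w) \subseteq \mathrm{Star}(Q)$, again contradicting cluster separation.

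The hard part will be the remaining subcase, where some vertex of the witnessing simplex $\sigma'$ has a preimage in $\mathrm{Int}\,C$: neither Proposition~\ref{proposition:secureperturbation} nor Lemma~\ref{lemma:combisoOnVoronoi} directly controls such a vertex. The cleanest route I see is to apply Proposition~\ref{proposition:secureperturbation} with $Q_{\mathrm{prop}} = \mathrm{Star}(C_0) \setminus C_0$, the $\delta$-good bubble around the separated cluster itself, and use the $O(\sqrt[d]{\log N})$ circumradius bound from Lemma~\ref{lemma:emptyDisc} to force any spurious simplex to have a vertex either in this bubble or outside $\mathrm{Int}\,C$, reducing to cases already handled.
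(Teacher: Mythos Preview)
Your approach is considerably more involved than the paper's, and it leaves a genuine gap that your proposed fix does not clearly close. The paper's proof is essentially three lines: set $W=\mathrm{Star}(Q)\setminus Q$, which is $\delta$-good by the bubble-wrap hypothesis; apply Proposition~\ref{proposition:secureperturbation} once to the \emph{entire} set $W$ (not pointwise) to obtain a combinatorial isomorphism $\mathrm{Star}(W;\mathrm{Del}(Z))\cong\mathrm{Star}(\zeta(W);\mathrm{Del}(Z'))$; and then read off both inclusions simultaneously from the identity $\partial Q=\mathrm{Star}(W;Z)\cap Q$, which holds because a point of $Q$ lies in $\partial Q$ exactly when it has a Delaunay neighbour in $W$. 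The chain
\[
\zeta(\partial Q)=\zeta\bigl(\mathrm{Star}(W;Z)\cap Q\bigr)=\mathrm{Star}(\zeta(W);Z')\cap\zeta(Q)=\partial\zeta(Q)
\]
is the whole argument.

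By applying the Proposition only to singletons $\{z\}$ you are forced into a case split on whether the witnessing neighbour $z$ is $\delta$-good, and when it is not you descend into further subcases on the other vertices of a containing $d$-simplex of $\mathrm{Del}(Z')$. Your handling of the subcase ``all $v_i\in\partial C$'' already has a problem: knowing that $\{q,z\}$ is a Delaunay edge in $Z\setminus\mathrm{Int}\,C$ and that some $w\in\mathrm{Int}\,C$ destroys it in $Z$ does \emph{not} give $z\in\mathrm{Star}(w;Z)$---a point inside a circumball need not be Delaunay-adjacent to any particular vertex on its boundary. And for your acknowledged ``hard subcase'' (some preimage in $\mathrm{Int}\,C$), the suggestion to apply the Proposition to $\mathrm{Star}(C_0)\setminus C_0$ is reasonable in spirit, but the circumradius argument you allude to is not spelled out and it is unclear how it forces a vertex of $\sigma'$ into that good set rather than into $\mathrm{Int}\,C$.

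All of this is sidestepped by the paper's single application of the Proposition to $W$. The structural observations you make about $\partial Q\subseteq Z\setminus\mathrm{Int}\,C$ and about bad clusters being either inside $Q$ or disjoint from $\mathrm{Star}(Q)$ are correct and pleasant, but they are not needed here: the key identity $\partial Q=\mathrm{Star}(W)\cap Q$ together with one global invocation of the Proposition does the job.
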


\begin{proof}
    Let $W = \mathrm{Star}(Q) \setminus Q$. As $Q$ is $\delta$-bubble wrapped, $W$ is $\delta$-good. By Proposition \ref{proposition:secureperturbation}, $\zeta$ induces a combinatorial isomorphism between $\mathrm{Star}(W ; Z)$ and $\mathrm{Star}(\zeta(W); \zeta(Z))$. Thus
    $$\zeta\paren{\partial Q} = \zeta\paren{\mathrm{Star}(W ; Z)\cap Q}=\mathrm{Star}(\zeta(W); \zeta(Z))\cap \zeta\paren{Q}=\partial \zeta\paren{Q}\,.$$
\end{proof}

The last ingredient in our proof of Theorem \ref{theorem:perturbwrap} is to demonstrate the stability of the topology of Voronoi cells of $Z\setminus C$ under $\zeta$. That is, a combinatorial isomorphism should induce a homeomorphism of the torus that maps between the Voronoi tessellations. In particular, topological features like the presence or absence of giant cycles are preserved. We demonstrate this over the next two lemmas.

\begin{lemma}
    Suppose $B_1, B_2$ are homeomorphic to the $d$-ball and $f : \partial B_1 \to \partial B_2$ is a homeomorphism. Then, $f$ may be extended to a homeomorphism $\bar{f}$ from $B_1$ to $B_2$.
\end{lemma}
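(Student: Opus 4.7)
The plan is to use the Alexander trick, reducing first to the case of the standard closed unit ball $D^d\subset \mathbb{R}^d$. Fix homeomorphisms $\phi_i\colon B_i\to D^d$ for $i=1,2$; these restrict to homeomorphisms $\partial B_i\to S^{d-1}$. If I can extend the conjugated boundary map $g := \phi_2\circ f\circ \phi_1^{-1}\big|_{S^{d-1}}\colon S^{d-1}\to S^{d-1}$ to a homeomorphism $\bar g\colon D^d\to D^d$, then
\[
\bar f := \phi_2^{-1}\circ \bar g\circ \phi_1\colon B_1\to B_2
\]
is a homeomorphism that restricts to $f$ on $\partial B_1$, since on the boundary the middle map $\bar g$ agrees with $g=\phi_2 f\phi_1^{-1}$ and the outer factors cancel the $\phi_i$.

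For the core step, define the radial (Alexander) extension
\[
\bar g(x)=\begin{cases} \|x\|\,g\bigl(x/\|x\|\bigr) & x\neq 0,\\ 0 & x=0.\end{cases}
\]
On $D^d\setminus\{0\}$ this is a composition of continuous maps and hence continuous; continuity at the origin follows from $\|\bar g(x)\|=\|x\|$. The same formula applied to $g^{-1}$ gives a two-sided inverse, so $\bar g$ is a continuous bijection; because $D^d$ is compact and Hausdorff, $\bar g$ is automatically a homeomorphism. On $S^{d-1}$ one recovers $\bar g(x)=g(x)$, so $\bar g$ extends $g$ as required.

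The only subtlety is verifying continuity of $\bar g$ at the origin and that the radial formula really produces a bijection, but both are immediate from the identity $\|\bar g(x)\|=\|x\|$ together with the fact that $g$ is itself a bijection of $S^{d-1}$. I do not expect any dimension-specific obstruction: the proof is essentially one formula, and neither smoothness nor a PL structure is needed. In particular, nothing in the argument depends on the embedding of $B_1$ or $B_2$ into the torus, which is why this lemma can be applied as a purely local ``plug-in'' in the forthcoming proof of Theorem~\ref{theorem:perturbwrap} to glue the combinatorially induced homeomorphisms across bad clusters.
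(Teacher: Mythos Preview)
Your proof is correct and follows essentially the same approach as the paper: reduce to the standard ball via chosen homeomorphisms $\phi_i:B_i\to D^d$, apply the radial (Alexander) extension $\bar g(x)=\|x\|\,g(x/\|x\|)$ to the conjugated boundary map, and conjugate back. The paper organizes this as two cases (unit ball, then general) while you present it in one pass, but the content is identical.
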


\begin{proof}
We proceed in cases. 

Case 1: $B_1$ and $B_2$ are unit $d$-balls in $\mathbb{R}^d$ centered at the origin. We may define $\bar{f}:B_1\to B_2$ by
\[\bar{f}(x) :=
\begin{cases}
    \|x\| f(x/\|x\|) & \text{if $x \not = 0$} \\
    0 & \text{else.}
\end{cases}\]
It is a homeomorphism as we may define a continuous inverse, $\bar{f}^{-1}$ by
\[\bar{f}^{-1}(x) : =
\begin{cases}
    \|x\| f^{-1}(x/\|x\|) & \text{if $x \not = 0$} \\
    0 & \text{else.}
\end{cases}\]
As $\bar{f} \circ \bar{f}^{-1} = id$ and $\bar{f}^{-1} \circ \bar{f} = id$, we conclude that $\bar{f}$ is a homeomorphism that extends $f$. 

Case 2: general case. Let $D$ be the unit $d$-ball in $\mathbb{R}^d$ and $g_1 : B_1 \to D, g_2 : B_2 \to D$ be homeomorphisms. Then, $h :=g_2 \circ f \circ g_1^{-1} |_{\partial D} : \partial D \rightarrow \partial D$ is a homeomorphism. Thus, by the previous case, $h$ extends to a homeomorphism $\bar{h} : D \to D$ such that $\bar{h} |_{\partial D} = g_2 \circ f \circ g^{-1} |_{\partial D}.$ Therefore
\[\bar{f} :=g_2^{-1} \circ \bar{h} \circ g_1 : B_1 \to B_2\]
is a homeomorphism that also satisfies the condition
\[\bar{f} |_{\partial B_1} = g_2^{-1} \circ \bar{h} \circ g_1 |_{\partial B_1} = f.\]
\end{proof}

Recall that a $0$-cell is a vertex, a $1$-cell is an edge, a $2$-cell is a face, and so on. 

\begin{definition}[$i$-skeleton]
    Given a cell complex, the \textbf{$i$-skeleton} is the collection of all $i$-cells.
\end{definition}

As each vertex is a $0$-cell, a bijective map on the vertices is trivially a homeomorphism from the $0$-skeleton to the $0$-skeleton. We would hope that a combinatorial isomorphism between two Voronoi diagrams implies that they are homeomorphic in the ambient space. However, to rule out any topological degeneracies, such as a cell wrapping around the torus of length $N$, we enforce an upper bound on the diameter on every Voronoi cell. 

\begin{lemma}
    Let $\zeta$ be a $\delta^5$-perturbation. If $\zeta$ induces a combinatorial isomorphism of $V\paren{Z;Z}$ with $V\paren{f\paren{Z};f\paren{Z}}$ and the diameter of every Voronoi cell is less than $N$  then $\zeta$ can be extended to a homeomorphism $\bar{\zeta}:\mathbb{T}^d\to\mathbb{T}^d$ so that $\bar{\zeta}\paren{V\paren{z;Z}}= V\paren{\zeta\paren{z},\zeta\paren{Z}}$ for all $z\in Z.$ 
\end{lemma}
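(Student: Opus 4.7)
The plan is to construct $\bar{\zeta}$ by a standard cell-by-cell extension, inducting on the dimension of the skeleton of the Voronoi mosaic. A combinatorial isomorphism of Voronoi mosaics carries, for every $i$, a bijection between the $i$-cells of $V(Z;Z)$ and those of $V(\zeta(Z);\zeta(Z))$: such a bijection is forced by the one already given on top-dimensional cells together with the duality $\mathrm{Nerve}(V(Z))=\mathrm{Del}(Z)$, since each $i$-face is the intersection of a specific collection of $(d-i+1)$ Voronoi cells. The diameter hypothesis ensures that each such cell lifts injectively to $\mathbb{R}^d$ as a convex polytope, so every $i$-cell of either mosaic is a topological $i$-ball whose boundary is a topological $(i-1)$-sphere.

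First I would define $\bar{\zeta}$ on the $0$-skeleton by sending each Voronoi vertex to its combinatorial image. For the inductive step, suppose $\bar{\zeta}$ has been defined on the $i$-skeleton of $V(Z;Z)$ as a cellular homeomorphism onto the $i$-skeleton of $V(\zeta(Z);\zeta(Z))$. For each $(i+1)$-cell $\sigma$, its boundary $\partial\sigma$ is a union of $i$-cells on which $\bar{\zeta}$ is already defined, and by the inductive hypothesis these agree on lower-dimensional overlaps, giving a homeomorphism $\partial\sigma\to\partial\sigma'$, where $\sigma'$ is the corresponding $(i+1)$-cell. The previous lemma on extending boundary homeomorphisms of balls then provides an extension to a homeomorphism $\sigma\to\sigma'$, which I use to define $\bar{\zeta}$ on $\sigma$. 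Iterating up to $i+1=d$ produces a map on the entire torus, since the $d$-skeleton of the Voronoi mosaic equals $\mathbb{T}^d_N$.

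It remains to check that the resulting $\bar{\zeta}$ is a homeomorphism with the asserted property. Continuity follows from the gluing lemma applied to the closed cover by $d$-cells, using that the restrictions agree on all intersections (which are lower-dimensional cells defined at earlier stages). Bijectivity holds cell-by-cell. A continuous bijection between compact Hausdorff spaces is automatically a homeomorphism, and by construction $\bar{\zeta}(V(z;Z))=V(\zeta(z),\zeta(Z))$ for every $z\in Z$.

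The main obstacle is the potential failure, on the torus, of an $i$-cell to be a topological $i$-ball: for example, a Voronoi cell whose diameter exceeds the injectivity radius could wrap around $\mathbb{T}^d_N$ and fail to be simply connected, invalidating the use of the previous lemma at the inductive step. The hypothesis that every Voronoi cell has diameter less than $N$ rules this out, since each $d$-cell (and hence each of its faces) then lifts to a genuine convex polytope in $\mathbb{R}^d$. A secondary point requiring care is that $\bar{\zeta}$ is not literally an extension of $\zeta$ on the seed points of $Z$, which lie in the interiors of the top-dimensional cells rather than on the skeleton; the statement only requires $\bar{\zeta}$ to respect the cell decomposition, which is exactly what the induction delivers.
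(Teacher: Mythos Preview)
Your proposal is correct and follows essentially the same approach as the paper: induction on the skeleton dimension, using the preceding ball-extension lemma at each step and the diameter hypothesis to guarantee that every face is a topological ball. Your write-up is in fact more careful than the paper's, which omits the gluing-lemma/compact-Hausdorff verification and the remark that $\bar{\zeta}$ need not literally extend $\zeta$ on the seed points.
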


\begin{proof}
    Let $V^{i}(Z ;Z)$ denote the $i$-skeleton of $V(Z;Z)$. We induct on $i.$ The base case $i=0$ is trivial.
    
    Suppose $\zeta$ extends to a homeomorphism $\zeta_i:V^i(Z;Z) \to V^i(\zeta(Z); \zeta(Z))$ mapping $i$-cells to $i$-cells. Since the diameter of each Voronoi cell is less than $N$, the $(i+1)$-faces are homeomorphic to $(i+1)$-balls. So, $\zeta_i$ extends to a homeomorphism $\bar{\zeta_i}$ mapping an $(i+1)$-cell of $V^{i+1}(Z;Z)$ to an $(i+1)$-cell of $V^{i+1}(\zeta(Z), \zeta(Z))$ by the previous lemma. As two $(i+1)$-cells may only intersect in an $i$-cell by the definition of a cell complex, we may glue over intersecting $(i+1)$-cells to extends further to a homeomorphism $\zeta_{i+1}$ from $V^{i+1}(Z;Z)$ to $V^{i+1}(\zeta(Z) ; \zeta(Z))$.
\end{proof}

Now we prove Theorem \ref{theorem:perturbwrap}.

\begin{proof}[Proof of Theorem \ref{theorem:perturbwrap}]
By Lemma \ref{lemma:invariantBoundary}, $\zeta(\mathrm{Int} \, C) = \mathrm{Int} \, \zeta(C)$. We infer from Corollary \ref{cor:interiorDiscard} that
\[U\paren{\zeta\paren{Q}\setminus \zeta\paren{\mathrm{Int}\,C},\zeta\paren{Z}\setminus \zeta\paren{\mathrm{Int}\,C}}=U\paren{\zeta\paren{Q},\zeta\paren{Z}}.\]
Every vertex of $Z \setminus C$ is $\delta$-good so by Lemma \ref{lemma:combisoOnVoronoi} $\zeta$ induces a combinatorial isomorphism between
$V(Q \setminus \mathrm{Int}  \, C , Z \setminus \mathrm{Int} \, C)$ and $V(\zeta(Q \setminus \mathrm{Int} \, C) , \zeta(Z)\setminus \zeta( \mathrm{Int} C))$ which in turn is the same as $V(\zeta(Q) \setminus \mathrm{Int} \, \zeta(C),   \zeta (Z) \setminus \mathrm{Int} \, \zeta ( C))$ by Lemma \ref{lemma:invariantBoundary}. It follows from the previous result that $\zeta|_{Q\setminus \mathrm{Int}\,C}$ extends to a homeomorphism $\bar{\zeta}:\T^d\to \T^d$ so that 
\begin{align*}
    \bar{\zeta}\paren{U\paren{Q,Z}} &=\bar{\zeta}\paren{U\paren{Q\setminus\mathrm{Int}\,C,Z\setminus \mathrm{Int}\,C}} \\
    &=U\paren{\zeta\paren{Q\setminus\mathrm{Int}\,C},\zeta\paren{Z\setminus \mathrm{Int}\,C}} \\
    &=U(\zeta(Q), \zeta(Z))\,.
\end{align*}
\end{proof}

\section{Coupling construction}\label{section:coupling}
In this section, we prove Theorem \ref{thm:main_technical}. The coupling we construct is a generalization of that featured in Lemma 20 of \cite{Bollob_s_2005} to higher dimensions. The arguments of this section are owed directly to Bollobás and Riordan, modified for our setting. Central to their arguments is \textbf{potential instability}, which can be interpreted for now as ``nearly $\delta$-unstable".

The ingredients of this coupling between $\paren{Z_1,R_1}$ and $\paren{Z_2,R_2}$ are laid out in the following scheme:
\begin{enumerate}
    \item A natural coupling where $Z_1=Z_2,$ points are included in $R_1$ independently with probability $p_1,$ and additional points are colored red in $R_2$ with probability $p_2/p_1$.
    \item A coupling for a region of potential instability that ``crosses over'' unstable regions in $Z_1$ with red and fully stable regions in $Z_2.$
    \item Independence between each region of potential instability.
\end{enumerate}
We apply (2) to each region of potential instability and (1) elsewhere, and define our final coupling as an independent product of probability spaces by (3). The nuance of our coupling is due to (2) as it requires moving the points. We begin by defining the natural coupling.

\begin{definition}[Natural coupling]
    Let $0 < p_1 < p_2 < 1$. We couple $(Z, R_1)$ with $(Z, R_2)$ by coloring each white point of $(Z, R_1)$ red in $(Z,R_2)$ independently with probability $p_2/p_1$.
\end{definition}

This coupling satisfies the condition $R_1 \subset R_2$. While most points of $R_1$ will be $\delta$-good, a small portion will be $\delta$-bad and great care must be taken. We must also specify the conditions under which we can make up for these $\delta$-bad regions by increasing $p$ by $\epsilon$. 

Throughout this section, we use the following convention: we write that a random variable $X$ is $o(f(N))$ (resp. $O(f(N))$) with probability $1-o(1)$ to mean that, for any $\eta > 0$ (resp. there exists $\eta>0$ so that) , $X<\eta f(N)$ with high probability as $N\to \infty.$


\begin{definition}[Allocation scheme]
    Let $l$ be the closest number to $\delta^{1/5}$ such that $n := (N/l)^d$ is an integer. We may tile the torus with $n$ cubes $\Lambda_i$ of length $l$. We couple two Poisson point processes, $Z_1$ and $Z_2$, by setting the number of points in $Z_2$ to be equal to the number of points in $Z_1$ in each cube $\Lambda_i$ and placing them independently. We call the assignment of the number of points in each cube an \textbf{allocation scheme} and a placement of the correct number of points in each cube a \textbf{realization} of the allocation scheme.
\end{definition}

For every coupling of Poisson point processes with the same allocation scheme, there exists an \textbf{induced perturbation} $\zeta : Z_1 \to Z_2$ that matches the points in each cube of $Z_1$ to the corresponding points in $Z_2.$ If a cube has more than one point assigned we match those points arbitrarily. 

\begin{lemma}
    With probability $1 - o(1)$, each $\Lambda_i$ has $O(1)$ points.
\end{lemma}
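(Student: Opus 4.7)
The plan is to apply a Chernoff bound for each cube $\Lambda_i$ and then take a union bound over the $n$ cubes, exactly as in the proof of Lemma~\ref{lemma:chernoff}, but with very different parameters. The key observation is that each $\Lambda_i$ has volume $\|\Lambda_i\| = l^d$, where $l$ is within $O(1)$ of $\delta^{1/5} = N^{-\epsilon/5}$, so $\|\Lambda_i\| = \Theta(N^{-\epsilon d/5})$, which is vanishingly small. The number $|\Lambda_i|$ of Poisson points in $\Lambda_i$ is Poisson with mean $\|\Lambda_i\|$, so its expected value tends to $0$ and we only need to rule out large deviations.

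First, I would apply the Chernoff-style bound
\[
\mathbb{P}\bigl(|\Lambda_i| \geq a\bigr) \leq \left(\frac{e\|\Lambda_i\|}{a}\right)^{a} e^{-\|\Lambda_i\|}
\leq \left(\frac{e}{a}\right)^{a} N^{-a\epsilon d/5}
\]
for any integer $a \geq 1$. Next, observe that the total number of cubes is
\[
n = (N/l)^d = \Theta\bigl(N^{d(1 + \epsilon/5)}\bigr).
\]
So the union bound yields
\[
\mathbb{P}\bigl(\exists\, i : |\Lambda_i| \geq a\bigr) \leq n \cdot \left(\frac{e}{a}\right)^{a} N^{-a\epsilon d/5}
= O\!\left(N^{d(1+\epsilon/5) - a\epsilon d/5}\right).
\]
Finally, I would choose $a$ to be any fixed integer satisfying $a > 5/\epsilon + 1$, for example $a = \lceil 5/\epsilon \rceil + 2$; this is a constant depending only on $\epsilon$, and it makes the exponent negative, so the failure probability is $O(N^{-c})$ for some $c > 0$. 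Therefore with probability $1 - o(1)$ every cube contains at most $a = O(1)$ points, as required.

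There is no real obstacle here: the argument is a routine small-volume Chernoff-and-union-bound, and the only point that requires mild care is verifying that $a\epsilon d/5$ can be pushed strictly above $d(1 + \epsilon/5)$ by choosing $a$ as a sufficiently large constant (which is possible since $\epsilon$ is fixed). The lemma will then be used in the next steps to ensure that the induced perturbation $\zeta$ arising from the allocation scheme moves each point by at most $O(l) = O(\delta^{1/5})$, which is comfortably smaller than $\delta$ and so Theorem~\ref{theorem:perturbwrap} applies.
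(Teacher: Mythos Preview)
Your proof is correct and matches the paper's approach exactly: a Chernoff bound on each Poisson count followed by a union bound over the $n$ cubes, with the constant $a$ chosen large enough in terms of $\epsilon$ (you are in fact slightly more careful than the paper in tracking $n = \Theta(N^{d(1+\epsilon/5)})$ rather than $O(N^d)$). One small slip in your closing remark, not in the proof itself: $\delta^{1/5}$ is \emph{larger} than $\delta$, not smaller; the relevant comparison in Section~\ref{section:coupling} is with $(\delta')^5 = \delta^{1/6}$, and $l\sqrt{d} = \Theta(\delta^{1/5}) \ll \delta^{1/6}$ is what makes Theorem~\ref{theorem:perturbwrap} applicable.
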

 
\begin{proof}
    Let $\Lambda$ be a cube of length $l$. The probability that $\Lambda$ contains more than $a$ points is bounded above by:
    \begin{align*}
        \mathbb{P}(|\Lambda| \geq a) &\leq \bigg(\frac{e\|\Lambda\|}{a}\bigg)^{a} e^{-\|\Lambda\|} \\
        &\leq \frac{e^al^{ad}}{a^a} \\
        &= \Theta(\frac{e^a}{a^a}\delta^{ad/5}).
    \end{align*}
    Choosing $a = 5(d+1)/\epsilon$ yields $\Theta(N^{-(d+1)}) \leq o(N^{-d})$ in the last expression. As there are $O(N^d)$ cubes, the probability that one of them contains more than $a$ points is $O(N^d) \cdot o(N^{-d}) = o(1)$.
\end{proof}

A choice of allocation scheme determines some instabilities inherited by all of its realizations. For example, if two points are to be placed into a cube, then no matter how they are placed, they will always form an $l\sqrt{d}$-close pair. Despite this, the probability that they form a $\delta$-close pair is still quite small as $\delta$ is exponentially smaller than $l = \Theta(\delta^{1/5})$. On the other hand, if only one point is placed into a cube and every adjacent cube is empty, then that point cannot form a $\delta$-close pair in any realization. We would like to measure these instabilities on a scale greater than $l$, we pick $\delta' = \delta^{1/30}$. A crucial step in our argument is to compare $\delta$-instabilities in realizations to $\delta'$-instabilities in allocation schemes.


\begin{definition}[Potential instability and potential badness]
    A cube in an allocation scheme is \textbf{potentially $\delta'$-unstable} (resp. \textbf{potentially $\delta'$-bad}) if there is a realization in which a $\delta'$-unstable (resp. $\delta'$-bad) point is contained in the cube. 

    Consider the graph on the potentially $\delta'$-bad cubes of an allocation scheme where two cubes are connected by an edge if there exists a realization with a $\delta'$-bad cluster that intersects both cubes. A \textbf{potentially $\delta'$-bad cluster} of an allocation scheme is a maximal component of this graph.
\end{definition}

We now introduce the event that makes the coupling of Theorem \ref{thm:main_technical} asymptotically feasible.

\begin{definition}[Coupling feasibility]
    For $\eta > 0$, we define $E_{\mathrm{good}, \eta}$ to be the event that every potentially $\delta'$-bad cluster is $\eta \log{N}$ in size and diameter and contains $O(1)$ potentially unstable points. As $\eta$ is arbitrary, by abuse of notation, we write $E_{\mathrm{good}}$ and choose $\eta$ later.
\end{definition}

We will adapt our arguments from the previous section to show that $E_{\mathrm{good}}$ occurs with high probability. 

\begin{figure}[h]
    \centering
    \includegraphics[width=0.5\linewidth]{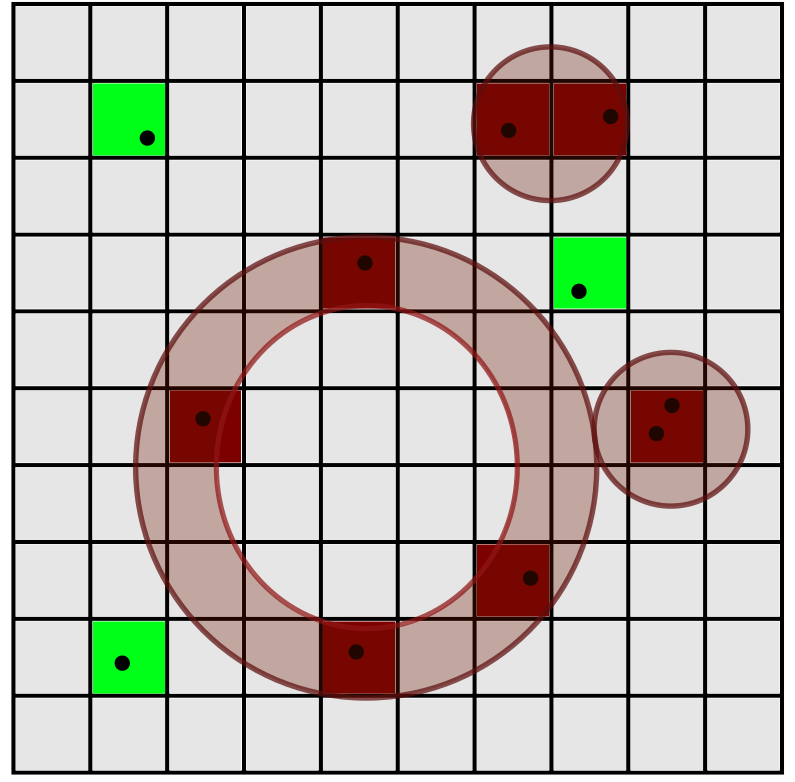}
    \caption{An allocation scheme and a realization with a $\delta'$-unstable $(d+2)$-tuple and two $\delta'$-close pairs. The gray cubes have no points, the red cubes are potentially $\delta'$-unstable, and the green cubes are non-empty and not potentially $\delta'$-unstable. }
    \label{fig:coarsestate}
\end{figure}

A nonempty cube $\Lambda$ is potentially $\delta'$-unstable in two mutually exclusive cases, some subcases are illustrated in Figure~\ref{fig:coarsestate}.
\new{\begin{enumerate}
    \item ($\delta'$-coplanar tuple) A $k$-plane of diameter $\log{N}$ and width $\delta'$ intersects $\Lambda$ and at least $k+1$ other nonempty cubes, and these cubes do not belong to the above two cases or this case for any $k' < k$ with $(\delta')^{1/2^{k-k'}}$.
    \item ($\delta'$-unstable tuple) A spherical shell of inner radius $r < \log{N}$ and outer radius $r + \delta'$ intersects $\Lambda$ and at least $(d+1)$ other nonempty cubes and those cubes do not belong to the other three cases.
\end{enumerate}}
Again, we are reiterating the ideas of Section \ref{section:badasymptotics}.

\begin{lemma}\label{lemma:potentiallybadbound}
    When $N$ is sufficiently large, the points in a potentially $\delta'$-bad cube are $2\delta'$-bad in any realization. 
\end{lemma}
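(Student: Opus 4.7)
The key inequality driving the proof is $2 l \sqrt{d} < \delta'$ for $N$ sufficiently large, which follows from $l = \Theta(\delta^{1/5})$ and $\delta' = \delta^{1/30}$, so that $l/\delta' \to 0$. For the induced perturbation $\zeta$ between any two realizations of the same allocation scheme, each point moves by at most $l \sqrt{d}$, and pairwise distances thus shift by at most $2 l \sqrt{d} < \delta'$.

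First I would show that every point of a potentially $\delta'$-unstable cube is $2\delta'$-unstable in any realization, by case-splitting on the three enumerated sources of potential instability. If $\Lambda$ contains more than one point, the points form a $2\delta'$-close pair in every realization. If $\Lambda$ is crossed by a $\delta'$-diameter ball that also meets another nonempty cube, the two corresponding points lie in a $2\delta'$-diameter ball after perturbation. If $\Lambda$ and $d+1$ other cubes are crossed by a common $\delta'$-wide spherical shell, the $(d+2)$-tuple in any realization sits inside a shell of width at most $\delta'+2l\sqrt{d}<2\delta'$. The shell case requires verifying the no-close-pair condition for the $(d+2)$-tuple: if the perturbation accidentally creates a $2\delta'$-close pair inside the tuple, its members are $2\delta'$-unstable directly as a close pair, and the point in $\Lambda$ is either in the pair or remains in the instability witness via a reduced sub-configuration on the same shell.

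Second, I would extend this to potentially $\delta'$-bad cubes. A potentially $\delta'$-bad cube $\Lambda$ witnesses, in some realization $R^*$, a point $z^* \in \Lambda$ in the Delaunay star of a $\delta'$-unstable point $u^* \in \Lambda_u$. Since $\Lambda_u$ is then potentially $\delta'$-unstable, Step 1 gives that the corresponding point $u$ in $\Lambda_u$ is $2\delta'$-unstable in any realization $R$. To see that $z$ lies in the Delaunay star of some $2\delta'$-unstable point in $R$, I would invoke Proposition \ref{proposition:secureperturbation} at scale $\delta'$, which is available because $l\sqrt{d}=O(\delta^{1/5})$ is dominated by $(\delta')^5=\delta^{1/6}$ as $\delta\to 0$. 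If the Delaunay edge $\{z^*,u^*\}$ survives the $l\sqrt{d}$-induced perturbation, then $z$ sits in the star of $u$ and we are done. Otherwise a third vertex $w$ must have invaded the old Delaunay ball from within distance $l\sqrt{d}<\delta'$, so the simplex containing $z^*$, $u^*$, and $w$ was $\delta'$-unstable in $R^*$; the cube containing $w$ is then also potentially $\delta'$-unstable, $z$ is adjacent to $w$ in $R$, and $w$ is itself $2\delta'$-unstable by the first step.

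The main obstacle is the adjacency-persistence argument in Step 2, since $z^*$ and $u^*$ sit precisely on the boundary of an unstable region, where the Boissonnat--Dyer--Ghosh stability theorem does not apply directly to the offending simplex itself. Overcoming this cleanly requires a careful combinatorial tracking of how the Delaunay stars of unstable configurations deform under $l\sqrt{d}$-scale perturbations, likely by an adaptation of the duality argument in Lemma \ref{lemma:combisoOnVoronoi} to the allocation-scheme setting, applied to the $\delta'$-good subcomplex that surrounds $\Lambda\cup\Lambda_u$ on all sides.
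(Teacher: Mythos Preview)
Your Step 1 is exactly the paper's argument: the authors handle the same three cases (close-pair ball, spherical shell, doubly occupied cube) using $l\sqrt d\ll\delta'$, and conclude that every point of a potentially $\delta'$-\emph{unstable} cube is $2\delta'$-unstable in every realization. The paper's proof stops there --- it never performs anything like your Step 2, and the passage from ``unstable'' to ``bad'' promised by the lemma's wording is simply not written out. So you have not overlooked a shortcut; you have in fact spotted a step the paper elides.

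Your Step 2 sketch, however, does not yet close that gap. Proposition~\ref{proposition:secureperturbation} applies only to $\delta'$-\emph{good} sets, and the vertex $z^*$ you want to feed it is $\delta'$-bad by hypothesis, so the combinatorial-isomorphism conclusion is unavailable for the simplex through $z^*$ and $u^*$. The fallback ``invasion'' argument is also off as stated: the failure of the edge $\{z,u\}$ in $R$ does not by itself place any third vertex inside the old ball $B^*$, because $z$ and $u$ have themselves slid off $\partial B^*$ and $B^*$ is no longer a candidate circumball for anything in $R$; and if the old simplex was thin, its circumcenter can move far more than $l\sqrt d$ under the perturbation. Even granting a destabilizing $w$, you still owe the adjacency of $z$ to $w$ in $R$, which is the same persistence question pushed one step along. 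Your closing diagnosis --- that a clean argument requires controlling the $\delta'$-good subcomplex surrounding $\Lambda\cup\Lambda_u$ --- is accurate; neither your sketch nor the paper actually supplies it.
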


\begin{proof}
    First, we argue that the points of a potentially $\delta'$-unstable cube are necessarily $2\delta'$-unstable. We will utilize the facts that $\delta' >> l\sqrt{d} >> \delta$ where $\delta' = \delta^{1/30}$ and $l \sqrt{d} = \Theta(\delta^{1/5})$, as $N\to\infty.$ 



    \new{Case 1: If a $k$-plane of diameter $\log{N}$ and width $\delta'$ intersects at least $(k+2)$ nonempty cubes, then the points of any realization of those cubes must be at most $(\delta' + l\sqrt{d})$-close to a $k$-plane. So, they form a $2\delta'$-coplanar $(k+2)$-tuple or otherwise contain either a $2\delta'$ $(k'+2)$-coplanar tuple for $k' < k$ or a $2\delta'$-close pair.}

    Case 2: If a spherical shell of inner radius $r$ and outer radius $r + \delta'$ intersects at least $(d+2)$ nonempty cubes then the points of any realization which are contained in those cubes sit in a spherical shell of inner radius $\max\{r - l\sqrt{d}, \delta'\}$ and outer radius $r + \delta' + l\sqrt{d}$. Thus, they contain as a subset either a $2\delta'$-unstable $(d+2)$-tuple if $r>l\sqrt{d}$ or otherwise a $2\delta'$-close pair.

\end{proof}

\begin{corollary}\label{cor:couplingfeasibility}
    $$\mathbb{P}(E_{\text{good}}) = 1 - o(1).$$
\end{corollary}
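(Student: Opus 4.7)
The plan is to adapt the proof of Lemma \ref{lemma:clusterBound} to the allocation-scheme setting, using Lemma \ref{lemma:potentiallybadbound} as a bridge. That lemma tells us that every potentially $\delta'$-unstable cube contains a $2\delta'$-unstable point in any realization, and every potentially $\delta'$-bad cube contains a $2\delta'$-bad point in any realization. Since $2\delta' = 2N^{-\epsilon/30} \leq N^{-\epsilon/31}$ for $N$ large, the entire machinery of Section \ref{section:badasymptotics} applies to $Z_1$ with $\delta$ replaced by $2\delta'$.

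First I would apply Lemma \ref{lemma:clusterBound} directly to $Z_1$ at parameter $2\delta'$: with probability $1-o(1)$, every $2\delta'$-bad cluster of $Z_1$ has diameter at most $\log N$, cardinality $o(\log N)$, and contains $O(1)$ unstable points. By Lemma \ref{lemma:potentiallybadbound}, the cubes in any potentially $\delta'$-bad cluster of the allocation scheme project into the $2\delta'$-bad points of $Z_1$, and the potentially $\delta'$-unstable cubes project into $2\delta'$-unstable points. Thus the count of potentially $\delta'$-unstable cubes in any cube $\Lambda'$ of side $\eta \log N$ is dominated by $|(Z_{2\delta'} \cup Z_T) \cap \Lambda'| = O(1)$ by Corollary \ref{corollary:badpointbound}.

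Next, I would bound the size and geometric spread of each potentially $\delta'$-bad cluster. Every cube in such a cluster is in the Delaunay star of a potentially $\delta'$-unstable cube in some realization. Since each realization of the allocation scheme inherits the Poisson structure, Corollary \ref{corollary:logarithmBound} and Lemma \ref{lemma:emptyDisc} yield that any star has cardinality $o(\log N)$ and diameter $O(\sqrt[d]{\log N})$. Chaining the $O(1)$ unstable cubes through their stars gives a cluster of cardinality $O(1)\cdot o(\log N) = o(\log N)$ and diameter $O(\sqrt[d]{\log N}) \ll \eta \log N$. In particular, each potentially $\delta'$-bad cluster is contained in a cube $\Lambda'$ of side $\eta \log N$, which confirms $E_{\mathrm{good}}$.

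The main obstacle will be the third step: making rigorous the fact that the adjacency structure on cubes defined through \emph{some} realization is controlled by the $2\delta'$-bad cluster structure on the \emph{single} realization $Z_1$. The workaround is to argue that since any two realizations differ by at most $l\sqrt{d} = \Theta(\delta^{1/5}) \ll \delta'$, if two cubes are joined in some realization by a Delaunay path through $\delta'$-bad points then the corresponding points in $Z_1$ are $2\delta'$-bad (by Lemma \ref{lemma:potentiallybadbound}) and lie within the $\log N$-diameter neighborhood prescribed by Lemma \ref{lemma:clusterBound}. A union bound over the $O(N^d)$ cubes of side $\log N$ tiling $\T^d_N$ then gives the global statement with probability $1-o(1)$.
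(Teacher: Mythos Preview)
Your proposal is essentially the paper's proof: invoke Lemma~\ref{lemma:potentiallybadbound} to pass from potentially $\delta'$-bad (resp.\ unstable) cubes to $2\delta'$-bad (resp.\ unstable) points in a single realization, then apply Lemma~\ref{lemma:clusterBound} at scale $2\delta'$. The paper's argument is two sentences and asserts outright that the points of a potentially $\delta'$-bad cluster lie in one $2\delta'$-bad cluster in every realization; you are more careful in flagging the connectivity-across-realizations issue, and your workaround (pulling everything back to $Z_1$) is the right move. One correction: the sentence ``each realization of the allocation scheme inherits the Poisson structure'' is false---once the allocation scheme is fixed, realizations are not Poisson---so Corollary~\ref{corollary:logarithmBound} and Lemma~\ref{lemma:emptyDisc} must be applied to $Z_1$ only and then transferred via the $l\sqrt{d}\ll\delta'$ perturbation, which is exactly what your final paragraph does.
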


\begin{proof}
    By the previous lemma, the points of every potentially bad cluster are contained in a $2\delta'$-bad cluster in every realization. By Lemma \ref{lemma:clusterBound}, every $2\delta'$-bad cluster has $O(1)$ $2\delta'$-unstable points and is $\eta \log{N}$ in size and diameter with probability $1 - o(1)$.
\end{proof}

Let $C$ be a potentially $\delta'$-bad cluster. For a realization, we say that the points of $C$ are the points placed in cubes contained in $C.$ Such a point is a boundary point if its star contains points in cubes outside of $C.$

\begin{lemma}\label{lemma:boundaryC}
    If $E_{\mathrm{good}}$ occurs, then the boundary points of $C$ are well-defined in the sense that they belong to the same cubes in any realization.
\end{lemma}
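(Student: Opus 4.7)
The plan is to fix two realizations $Z_1,Z_2$ of the same allocation scheme and show that every cube $\Lambda\in C$ contains a boundary point in $Z_1$ if and only if it contains one in $Z_2$. Let $\zeta:Z_1\to Z_2$ be the induced perturbation matching points sharing a cube. Since each cube has side $l=\Theta(\delta^{1/5})$, $\zeta$ has magnitude at most $l\sqrt d$. Because $\delta'=\delta^{1/30}$, we have $l\sqrt d=\Theta(\delta^{1/5})<\delta^{1/6}=(\delta')^5$ for $N$ large, so $\zeta$ is a $(\delta')^5$-perturbation; this is exactly the scale at which Proposition~\ref{proposition:secureperturbation} transports combinatorial stars of $\delta'$-good points across the two realizations.

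The crucial input from the maximality of $C$ is the following: if $p\in\Lambda\in C$ and $q\in\Lambda'\notin C$ are Delaunay neighbors in some realization, then at least one of $p,q$ is $\delta'$-good in that realization. Indeed, if both were $\delta'$-bad then, being Delaunay-adjacent, they would lie in a single $\delta'$-bad cluster of that realization; this cluster would meet both $\Lambda$ and $\Lambda'$ and hence connect them in the potentially $\delta'$-bad-cluster graph, contradicting $\Lambda'\notin C$. Armed with this, suppose $\Lambda\in C$ contains a boundary point $p$ in $Z_1$, witnessed by some Delaunay neighbor $q\in\Lambda'\notin C$. I would apply Proposition~\ref{proposition:secureperturbation} with $Q=\{q\}$ if $q$ is $\delta'$-good in $Z_1$, or with $Q=\{p\}$ otherwise. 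This yields a combinatorial isomorphism between $\mathrm{Star}(Q;\mathrm{Del}(Z_1))$ and $\mathrm{Star}(\zeta(Q);\mathrm{Del}(Z_2))$; since $\zeta$ preserves cube membership, the image of the witnessing edge still joins a point of $\Lambda$ to a point of $\Lambda'\notin C$ in $Z_2$, so $\Lambda$ contains a boundary point in $Z_2$. The reverse inclusion follows by the symmetric argument applied to $\zeta^{-1}$.

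The main obstacle is precisely the maximality observation that pins down a $\delta'$-good endpoint of every candidate boundary-witnessing edge---this is the conceptual core that couples the combinatorial definition of a potentially bad cluster to the geometric stability theorem. The remaining hypothesis of Proposition~\ref{proposition:secureperturbation}, that the $\delta'$-good points form a $\log N$-sample set, follows (with probability $1-o(1)$) by combining $E_\mathrm{good}$ with Lemma~\ref{lemma:emptyDisc} and Corollary~\ref{corollary:badpointbound}: in any realization the $\delta'$-bad points lie inside a union of potentially bad clusters, each of diameter at most $\eta\log N$, so excising them from the $\log N$-sample set $Z$ leaves a sample set at the same asymptotic scale.
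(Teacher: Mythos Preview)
Your proof is correct and rests on the same mechanism as the paper's---the perturbation stability of stars of $\delta'$-good points via Proposition~\ref{proposition:secureperturbation}---but the packaging differs. The paper observes that $C_Z$ (the set of points in $C$-cubes) is $\delta'$-\emph{bubble wrapped}, i.e.\ every Delaunay neighbour of $C_Z$ lying outside $C_Z$ is $\delta'$-good, and then invokes Lemma~\ref{lemma:invariantBoundary} once to conclude $\zeta(\partial C_Z)=\partial C_{Z'}$. You instead argue edge-by-edge: for each boundary-witnessing edge $pq$ with $p\in\Lambda\in C$ and $q\in\Lambda'\notin C$, you extract that at least one endpoint is $\delta'$-good (via the maximality of $C$) and apply Proposition~\ref{proposition:secureperturbation} to that single vertex. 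Your input hypothesis (``at least one endpoint good'') is weaker and more transparently justified by maximality than the paper's bubble-wrap claim (``the outer endpoint $q$ is good''), at the cost of a small case split. Both routes yield the cube-level statement the lemma asserts; the paper's gives the slightly stronger point-level statement $\zeta(\partial C_Z)=\partial C_{Z'}$ for free, though this extra precision is not used downstream.
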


\begin{proof}
    Fix an allocation scheme and realization $Z$. Let $C_Z$ be the set of points in $Z$ that belong to $C$. Given another realization of the same allocation scheme, $Z'$, there is an induced perturbation $\zeta : Z \to Z'$ where $\zeta$ is a $l\sqrt{d} << (\delta')^{5}$-perturbation. Let $C$ be the set of all $\delta'$-bad points of $Z$, by $E_{\mathrm{good}}$, $Z \setminus C$ is a $\log{N}$-sample set. Moreover, as $\mathcal{C}_Z$ is a $\delta'$-bubble wrapped set, we may apply Lemma \ref{lemma:invariantBoundary} and obtain: $\zeta(\partial C_Z) = \partial \zeta(C_Z) = \partial C_{Z'}$.
\end{proof}

\begin{definition}
 Let $x_1,\ldots,x_j$ be the interior points of $C$ and $y_1,\ldots,y_k$ be the boundary points of $C$. Define $B\paren{C}$ to be the event that there exist points $y_1',\ldots,y_k'$ in the same $l$-boxes as $y_1,\ldots,y_k$ so that $C'=\set{x_1,...,x_j,y_1',...,y_k'}$ has a $\delta$-bad point.
\end{definition}

By Lemma \ref{lemma:boundaryC}, $B(C)$ only depends on the $\delta$-unstable points of $C$, which are interior points of $C$ and $O(1)$ in number if $E_{\mathrm{good}}$ occurs.

\begin{proposition}\label{prop:Egood} On the event $E_{\mathrm{good}}$ and $N$ sufficiently large, for every potentially $\delta$-bad cluster $C$, $\mathbb{P}(G(C)) \geq 2\mathbb{P}(B(C))$. In particular, there exists $G'(C) \subset G(C)$ and a measure-preserving bijection $f_C:B\paren{C}\cup G'\paren{C}\to B(C) \cup G'(C)$ with $f_C(B(C)) = G'(C)$, $f_C(G'(C)) = B(C)$ whose induced perturbation only moves $\delta$-unstable points.
\end{proposition}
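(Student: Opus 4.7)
The plan is to reduce the claim to a finite-dimensional measure comparison on an atomless Borel probability space, then apply a standard measurable-isomorphism argument to produce the bijection.

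First I would localize. By $E_{\mathrm{good}}$ and Lemma~\ref{lemma:boundaryC}, the $l$-cubes of the boundary points of $C$ are already determined by the allocation scheme, and by Corollary~\ref{cor:couplingfeasibility} the cluster $C$ contains only $O(1)$ potentially $\delta'$-unstable points. Any point of $C$ that is not potentially $\delta'$-unstable is $\delta$-good in every realization and thus never needs to be moved. Conditional on the allocation scheme and on the positions of all non-potentially-unstable points of $C$, the events $B(C)$ and $G(C)$ depend only on (i) the positions of the $O(1)$ potentially $\delta'$-unstable points, each uniform in its $l$-cube, and (ii) the colors of $O(1)$ relevant points, which are independent Bernoullis. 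This reduces the problem to one on a product of uniform measures on $O(1)$ small cubes together with $O(1)$ independent coin flips.

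Next I would bound $\mathbb{P}(B(C))$ from above by adapting Lemma~\ref{lemma:pairBound} and Lemma~\ref{lemma:tupleBound} to the $O(1)$ relevant points: the probability that some placement realizes a $\delta$-close pair or a $\delta$-unstable $(d+2)$-tuple is $O(\delta^\alpha)$ for some constant $\alpha>0$, so $\mathbb{P}(B(C)) = O(\delta^\alpha) \to 0$ as $N\to\infty$. In the other direction, $\mathbb{P}(G(C))$ is bounded below by a positive constant independent of $N$: a fixed positive-volume sub-region of each $l$-cube places its point uniformly far from any possible near-coincidence with the other $O(1)$ relevant points, and each of the $O(1)$ color conditions holds with probability bounded below by a power of $\min(p,1-p)$. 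Combining these gives $\mathbb{P}(G(C)) \geq 2\mathbb{P}(B(C))$ for all $N$ sufficiently large.

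To produce $f_C$, I would note that the conditional law defined above is atomless and Borel and that both $B(C)$ and $G(C)$ are Borel. Standard measure theory yields a Borel measure-preserving bijection from $B(C)$ onto a subset $G'(C) \subset G(C)$ of equal measure; extending by swapping gives a measure-preserving involution $f_C$ of $B(C) \cup G'(C)$ that exchanges $B(C)$ and $G'(C)$.

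The main obstacle I expect is the last structural requirement: that the induced perturbation move only $\delta$-unstable points. A naive measurable swap could in principle rearrange stable coordinates too. I would handle this by constraining $f_C$ to act only on the coordinates attached to the $O(1)$ potentially $\delta'$-unstable points of $C$ (the only points in $C$ that can ever be $\delta$-unstable in any realization), with every other coordinate fixed pointwise. Under this constraint, any point moved by $f_C$ is potentially $\delta'$-unstable, and since one of the two exchanged configurations lies in $B(C)$, it is $\delta$-unstable in at least that configuration, which is exactly the property required by the proposition.
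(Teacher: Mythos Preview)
Your upper bound on $\mathbb{P}(B(C))$ and the measure-theoretic construction of $f_C$ are on the right track and close to the paper's approach. The genuine gap is in your lower bound on $\mathbb{P}(G(C))$.

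You assert that $G(C)$ involves only $O(1)$ color conditions and hence $\mathbb{P}(G(C))$ is bounded below by a positive constant independent of $N$. This is not correct: $G(C)$ is the event that \emph{every} point of the cluster $C$ is ``gained'' (white at $p_1$, red at $p_2$), so $\mathbb{P}(G(C)) = \epsilon^{|C|}$ with $\epsilon = p_2 - p_1$. Under $E_{\mathrm{good}}$, the number of potentially $\delta'$-\emph{unstable} points in $C$ is $O(1)$, but the total number of points $|C|$ is only bounded by $\eta\log N$ and can grow with $N$. Consequently $\mathbb{P}(G(C))$ can be as small as $\epsilon^{\eta \log N} = N^{\eta \log \epsilon} \to 0$, and your constant-lower-bound argument collapses.

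The paper's fix is quantitative rather than qualitative: both $\mathbb{P}(G(C))$ and $\mathbb{P}(B(C))$ decay like negative powers of $N$, and the freedom in $E_{\mathrm{good},\eta}$ lets one choose $\eta$ small enough (specifically $\eta = -2\epsilon/(10(d+3)\log\epsilon)$) so that the exponent $\eta\log\epsilon$ governing $\mathbb{P}(G(C))$ is strictly smaller in absolute value than the exponent $4\epsilon/(10(d+3))$ that bounds $\mathbb{P}(B(C))$ via Lemmas~\ref{lemma:pairBound} and~\ref{lemma:tupleBound} after rescaling. That comparison of exponents is the heart of the argument, and it is exactly what your proposal skips.
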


\begin{proof}
Under the event $E_{\mathrm{good}}$ occurs, every potentially bad cluster $C$ has $o\paren{\log\paren{N}}$ points. In particular, if we set
$$\eta = \nicefrac{-2\epsilon}{(10(d+3)\log\epsilon)}$$
then $C$ has fewer than $\eta \log\paren{N}$ points for $N$ sufficiently large. Thus, recalling that $\epsilon=p_2-p_1,$ we may compute an upper bound for the probability of $G(C)$ directly:
    \[\mathbb{P}(G(C)) = \epsilon^{|C|} \geq \epsilon^{\eta \log{N}} = N^{\eta \log{\epsilon}}\,.\]
    Our desired inequality becomes
    \[N^{\eta \log{\epsilon}} \geq 2 \mathbb{P}(B(C))\,.\]
    By our choice of $\eta,$ it will suffice to show that $\mathbb{P}(B(C)) \leq N^{-K\epsilon}$ \new{for some later chosen constant $K.$}

    Let $C_{\mathrm{unstable}}$ denote the union of all potentially $\delta'$-unstable cubes of $C$ and enumerate the cubes of $C_{\mathrm{unstable}}$ as $\Lambda_1, \dots, \Lambda_{T}$ arbitrarily. By $E_{\mathrm{good}}$ there are $O(1)$ potentially $\delta'$-unstable cubes and they have $O(1)$ points. That is, there exist constants $\eta_1, \eta_2$ so that there are at most $\eta_1$ cubes and each has at most $\eta_2$ points for all $N$. Let $1 \leq \lambda_i \leq \eta_2$ be the number of points in the cube $\Lambda_i$.
    
    To apply Lemmas~\ref{lemma:pairBound} and~\ref{lemma:tupleBound}, we rescale the torus by by $1/l$ so that the cubes of $C_{\mathrm{unstable}}$ become unit cubes. Denote by $\mathcal{C}$ the resulting collection of unit cubes. Then, there is a $\delta$-bad point in the original configuration if and only if there is a $\delta_1$-bad point in the rescaled one where $\delta_1 = \delta / l^d$ where we recall that $l \sim \delta^{1/5}$. Let $t = \sum_{i=1}^{T} \lambda_i \leq \eta_2\eta_1$ be the total number of points. If we place $t$ points into $\mathcal{C}$ uniformly (forgetting about the allocation scheme), the probability of a $\delta_1$-close pair is $O(\delta_1^{d\epsilon/2(d+2)})$ by Lemma \ref{lemma:pairBound}. \new{The case for coplanar tuples is similar to the proof of Corollary \ref{corollary:coplanarBoundComplete}. Let $B_{k, \delta_1}$ be the event that $\mathcal{C}$ contains at least one $\delta_1^{1/2^{d-k}}$-coplanar $(k+2)$-tuple. Then,
    \begin{align*}
        \mathbb{P}\paren{\bigcup_{k=1}^d B_{k, \delta_1}} &= \sum_{k=1}^d \mathbb{P}(B_{k,\delta_1}) \\
        &= \sum_{k=1}^d o(\delta_1^{2^{k-d}/(5(k+3))}) \\
        &= o(\delta_1^{\kappa}).
    \end{align*}
    for some constant $\kappa(d) > 0$ by Lemma \ref{lemma:coplanarBound}. The probability that a point is in a separated $\delta_1$-unstable $(d+2)$-tuple is $o(\delta_1^{1/(d+3)})$ by Lemma \ref{lemma:tupleBound}}. So, \new{the probability that placing $t$ points i.i.d. uniformly in $\mathcal{C}$ forms at least one $\delta_1$-bad point is $o(\delta^{K})$ for some constant $K(d)>0$.}
    This implies the desired statement, since there are only finitely many ways to place the the points into cubes and the probability of the correct assignment is bounded away from zero by a constant that does not depend on $N.$ 
\end{proof}

Now, we are ready to begin constructing our coupling. We do so independently for each allocation scheme. If the event $E_{\mathrm{good}}$ fails, we define the coupling arbitrarily. Otherwise, we construct the coupling separately for each potentially $\delta'$-bad cluster. As the assignments of point locations and colors to disjoint potentially bad clusters are independent, we may combine these couplings to define a global coupling for the allocation scheme on the event $E_{\mathrm{good}}.$

More formally, let $Z_1, Z_2$ Poisson point processes coupled to have the same allocation scheme, and condition on a fixed allocation scheme $\mathcal{A}.$ and enumerate the clusters of potentially $\delta'$-bad cubes as $C_j$ for $j = 1,\dots, n$. Let $D_j$ denote the union of the cubes in $C_j$, let $D_0$ be the union of the remaining cubes, let $Z_j^k = Z_j \cap D_k$ and $R_j^k = R_j \cap D_k$, $j = 1,2$. As each $Z_j^k$ are disjoint, we can construct a coupling independently for each $Z_j^k$, $k = 1, \ldots, n$. This partitions our probability space $\Omega$ into a product of probability spaces, $\Omega_0 \times \ldots \times \Omega_n$, where each $\Omega_j$ is the assignment of point locations and colors to the points in $D_j$ for $j = 0, \dots, n$. Lastly, the measure-preserving bijection in the previous proposition, $f_{C_j} : B(C_j) \cup G'(C_j) \to B(C_j) \cup G'(C_j)$, may be equivalently defined on the event space of each $\Omega_j$. We write $f_j$ to be this measure-preserving bijection on $\Omega_j$.

\begin{definition}[Coupling on a cluster]\label{defn:couplingcluster}
Let $Z_1,Z_2$ be as in the previous paragraph, let $0 < p_1 < p_2 < 1,$ and assume that $\mathcal{A} \in E_{\mathrm{good}}.$ Let $(Z_1, R_1^*(p_1))$ and $(Z_2^*, R_2^*(p_2))$ be given by the natural coupling with the allocation scheme $\mathcal{A}$. We construct a coupling of $(Z_1^k, R_1^k)$ with $(Z_2^k, R_2^k)$ over the probability space $(\Omega_j, \mathbb{P}_j)$ by    `
    \[(Z_2^k, R_2^k) = \begin{cases}
        (Z_2^*(\omega), R_2^*(\omega) & \omega \not\in B(C_k) \cup G'(C_k) \\
        (Z_2^*(f_k(\omega)), R_2(f_k(\omega)) & \omega \in B(C_k) \cup G'(C_k). 
    \end{cases}\]
    In addition, define $Q_k$ by
    \[W_k=\begin{cases}
        Z_k^2 & \text{ if $\omega \in B(C_k)$} \\
        R_k^2 & \text{ else.}
    \end{cases}\]
    
\end{definition}

We discuss the explicit properties of the coupling below.

Let $\zeta$ denote an induced perturbation from $Z_1^k$ to $Z_2^k$. For every $z \in Z_{1}^k$ and $\omega_k \in \Omega_k$
\begin{enumerate}
    \item ($\omega \in B(C_k) \implies f(\omega) \in G'(C_k)$).
    
    If $z$ is $\delta$-bad, then $\zeta(z)$ is red and not $\delta$-bad. Moreover, if $z$ belongs to $\partial C_k$, then $\zeta(z) = z$. Lastly, $W_k = Z_2^k$ and is $\delta$-good.
    
    \item ($\omega \in G'(C_k) \implies f(\omega) \in B(C_k)$).  
    
    If $G'(C)$ occurs, then $z$ is white and $\zeta(z)$ is possibly $\delta$-bad. Similarly, if $z$ is in $\partial C_k$, then $\zeta(z) = z$. Lastly, $W_k = \varnothing$.
    
    \item ($\omega \not\in B(C_k) \cup G'(C_k)$).
    
    We take $\zeta = \mathrm{id}$. $\zeta(z) = z$ and has the same color. If $z$ is white, then $\zeta(z)$ is red with probability $p_2/p_1$. Lastly, $W_k = R_1^k$ and is $\delta$-good.
\end{enumerate}

Finally, we extend this coupling to the entire product space $\Omega_0 \times \Omega_n$. The coupling is a natural coupling over $\Omega_0$ and a cluster coupling over $\Omega_k$ for $k = 1, \dots, n$.




\begin{definition}[Final coupling]
Let $Z_1, Z_2$ Poisson point processes coupled to have the same allocation scheme $\mathcal{A}$. If $\mathcal{A} \notin E_{\mathrm{good}}$, define the coupling arbitrarily. Otherwise, let $f_0 : \Omega_0 \to \Omega_0$ be the natural coupling and let $f_k : \Omega_k \to \Omega_k$ be the map defined in Definition~\ref{defn:couplingcluster}. Define $f:\Omega_0\times\ldots\times \Omega_n\to\Omega_0\times\ldots\times \Omega_n$ by $f=\prod_{j=0}^k f_j.$ 
\end{definition}

We now check that this final coupling indeed satisfies the properties of Theorem \ref{thm:main_technical}. We denote by $\zeta$ the point perturbation induced by $f.$  Throughout the proof, we also explicitly construct the point set $W.$ 

\textbf{Note:} The statement of the theorem is for $\epsilon_0, \delta_0 \leq N^{-\epsilon_0}$ and they satisfy the hypotheses of the couplings. This changes nothing and avoids notational ambiguity.

\begin{proof}[Proof of Theorem~\ref{thm:main_technical}]
We may assume that $E_\mathrm{good}$ occurs by Corollary \ref{cor:couplingfeasibility}. 

Let $W = \bigcup W_i$, where $W_i$ is defined above in Definition~\ref{defn:couplingcluster}. Recall by Lemma \ref{lemma:boundaryC}, the boundary is well-defined, by Proposition \ref{prop:Egood} the construction of $f$ yields an induced perturbation $\zeta$ does not move boundary points, and by Corollary \ref{cor:interiorDiscard}, $U(Q, Z) = U(Q \setminus \mathrm{Int \, Q}, Z \setminus \mathrm{Int} \, Q)$. Putting these altogether, we have the following analysis for each $C_k$ with $k>0$:
\begin{enumerate}
    \item If $\omega_k \in B(C_k)$, then $\zeta(R_1) \subset R_2^j =: W_i$ is $\delta$-good. Moreover,
    \[U(R_1^k, Z_{1}) \subset U(Z_1^k, Z_1) = U(Z_{1}^k \setminus \mathrm{Int} \, Z_1^k, Z_1 \setminus \mathrm{Int} \, Z_1^k) = U(W, Z_2) = U(R_2^k, Z_2).\]
    The second equality uses $Z_1^k \setminus \mathrm{Int} \, Z_1^k = Z_2^k \setminus \mathrm{Int} \, Z_2^k$ and that no point of $\partial C_k$ is moved for all $k$. 
    \item If $\omega_k \in G'(C_k)$ then $R_1^k = \varnothing =: W_k$ and
    \[U(R_1^k, Z_1) = U(W_k, Z_1) = \varnothing \subset U(R_2^k, Z_2).\] 
    \item If $\omega_k \notin B(C_k) \cup G'(C_k)$ then $Z_1^k = Z_2^k$, $\zeta |_{Z_1^k} = \mathrm{id}$ and $W_k = R_1^k$ is $\delta$-good.
    \[U(R_1^k, Z_1) = U(W_k, Z_1) \subset U(R_2^k, Z_2).\]
\end{enumerate}
Finally $Z_1^0 = Z_2^0$ and $R_1^0 \subset R_2^0$ so $U(R_1^0, Z_1) \subset U(W_0, Z_2)=U(R_1^0, Z_2)$. 
\end{proof}

\section{Homological Percolation at $d = 2i$.}\label{section:evendimension}
\subsection{Homological percolation ingredients}
Consider Voronoi percolation $P=U\paren{Z,R}$ on $\T_N^d.$ The dual percolation is the union of white Voronoi cells, $P^{\bullet}=U\paren{Z,Z\setminus R};$ it is a Voronoi percolation with parameter $1-p.$ Since $Z$ is in general position, the proof of Lemma C.1 in~\cite{bobrowski2020homological} yields that $U\paren{Z\setminus R,Z}$ is homotopy equivalent to $\mathrm{Int}\,U\paren{Z\setminus R,Z}=\T_N^d\setminus P.$  We may then infer a powerful relation between $P$ and $P^{\bullet}$ using Lemmas C.1 and C.2 from \cite{bobrowski2020homological}.
\begin{lemma}[Bobrowski and Skraba]\label{lemma:DualityLemma}
    Let $\phi_i : H_i(P, \mathbb{F}) \to H_i(\mathbb{T}^d_N, \mathbb{F})$ and  $\psi_i : H_i(P^{\bullet}, \mathbb{F}) \to H_i(\mathbb{T}^d_N, \mathbb{F})$ be the maps on homology induced by inclusion. Then 
   $$\mathrm{rank} \,\phi_{i} + \mathrm{rank} \,\psi_{d-i} = \mathrm{rank} \, H_i(\mathbb{T}^d)\,.$$ In particular, $A$ occurs either in the dual or the primal. Similarly, if $S$ occurs in the dual, then $A$ cannot occur in the primal and vice-versa.
\end{lemma}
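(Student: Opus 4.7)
The proof will be an application of Poincar\'e--Lefschetz duality on the torus. The plan is to turn the long exact sequence of the pair $\paren{\T^d_N,\T^d_N\setminus P}$ into a rank identity relating $\phi_i$ and $\psi_{d-i}$, then read off the ``in particular'' claims as immediate consequences.

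First I would lean on the comment preceding the statement: since $Z$ is almost surely in general position, $P^{\bullet}$ is homotopy equivalent to $\T^d_N\setminus P$, so $\psi_{d-i}$ is naturally identified with the inclusion-induced map $H_{d-i}\paren{\T^d_N\setminus P;\mathbb{F}}\to H_{d-i}\paren{\T^d_N;\mathbb{F}}$. I would then write down the relevant portion of the long exact sequence of the pair:
\[H_{d-i}\paren{\T^d_N\setminus P}\xrightarrow{\psi_{d-i}}H_{d-i}\paren{\T^d_N}\xrightarrow{\partial_*}H_{d-i}\paren{\T^d_N,\T^d_N\setminus P}\,.\]
Because $\T^d_N$ is a closed oriented manifold and $P$ is a compact polyhedral subcomplex, Poincar\'e--Lefschetz duality supplies an isomorphism $H_{d-i}\paren{\T^d_N,\T^d_N\setminus P;\mathbb{F}}\cong H^{i}\paren{P;\mathbb{F}}$ under which $\partial_*$ corresponds to the $\mathbb{F}$-linear dual of $\phi_i$ (after identifying $H_{d-i}\paren{\T^d_N}$ with $H^i\paren{\T^d_N}$ by closed-manifold Poincar\'e duality). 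Since rank is preserved under $\mathbb{F}$-linear dualization, $\mathrm{rank}\,\partial_*=\mathrm{rank}\,\phi_i$.

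Exactness at $H_{d-i}\paren{\T^d_N}$ then gives $\mathrm{rank}\,\psi_{d-i}+\mathrm{rank}\,\partial_*=\mathrm{rank}\,H_{d-i}\paren{\T^d_N}$, and Poincar\'e duality on $\T^d_N$ identifies $\mathrm{rank}\,H_{d-i}\paren{\T^d_N}$ with $\mathrm{rank}\,H_i\paren{\T^d_N}$, which establishes the main identity. The ``in particular'' claims follow mechanically: $H_i\paren{\T^d_N;\mathbb{F}}$ has positive rank for every $0\leq i\leq d$, so $\phi_i$ and $\psi_{d-i}$ cannot both vanish; and if the image of $\psi_{d-i}$ spans all of $H_{d-i}\paren{\T^d_N}$ (the event $S$ in the dual), then the rank equation forces $\mathrm{rank}\,\phi_i=0$, ruling out $A$ in the primal.

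The one genuine obstacle is the careful identification of $\partial_*$ with the dual of $\phi_i$. This is a standard but finicky diagram chase using the cap-product formulation of Poincar\'e--Lefschetz duality, and rather than re-derive it I would invoke Lemmas C.1 and C.2 of \cite{bobrowski2020homological}, where exactly this identification is carried out. A secondary technicality is that $P$ fails to be a sufficiently nice subspace on the probability-zero event that $Z$ is not in general position; this is harmless since we work modulo null sets throughout.
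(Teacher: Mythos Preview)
Your proposal is correct and matches the paper's treatment: the paper does not prove this lemma itself but simply attributes it to Bobrowski and Skraba, citing their Lemmas C.1 and C.2, which encode exactly the Poincar\'e--Lefschetz duality plus long-exact-sequence argument you outline. One cosmetic slip: the map you call $\partial_*$ from $H_{d-i}(\T^d_N)$ to $H_{d-i}(\T^d_N,\T^d_N\setminus P)$ is the relative map $j_*$, not the connecting homomorphism, but this does not affect the argument.
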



From this, we obtain the higher dimensional analogue of the result ``for any square on a Voronoi percolation, the probability of a horizontal crossing at $p = 1/2$ is at least $1/2$".

\begin{corollary}\label{cor:AtOneHalf}
    \[\mathbb{P}_{1/2}(A) \geq 1/2\,.\]
\end{corollary}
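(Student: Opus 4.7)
The plan is to exploit the self-duality of Voronoi percolation at $p = 1/2$ together with the duality lemma, in the familiar style of crossing-probability arguments for planar percolation.

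First, I would observe that when $d = 2i$ we have $d - i = i$, so Lemma~\ref{lemma:DualityLemma} specializes to
\[\mathrm{rank}\,\phi_i + \mathrm{rank}\,\psi_i = \mathrm{rank}\,H_i(\mathbb{T}^{2i}) = \binom{2i}{i} \geq 2.\]
In particular, the right-hand side is strictly positive, so at least one of $\mathrm{rank}\,\phi_i > 0$ or $\mathrm{rank}\,\psi_i > 0$ must hold. Letting $A^\bullet$ denote the event that the dual percolation $P^\bullet$ has a giant $i$-cycle (i.e.\ that $\mathrm{rank}\,\psi_i > 0$), this gives $\mathbb{P}_{1/2}(A \cup A^\bullet) = 1$.

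Next I would use the self-duality at $p = 1/2$. The dual percolation $P^\bullet = (Z, Z \setminus R)$ is itself a Voronoi percolation with inclusion probability $1 - p = 1/2$, so $P$ and $P^\bullet$ have the same distribution. Consequently $\mathbb{P}_{1/2}(A) = \mathbb{P}_{1/2}(A^\bullet)$. A union bound then yields
\[1 = \mathbb{P}_{1/2}(A \cup A^\bullet) \leq \mathbb{P}_{1/2}(A) + \mathbb{P}_{1/2}(A^\bullet) = 2\,\mathbb{P}_{1/2}(A),\]
and dividing gives $\mathbb{P}_{1/2}(A) \geq 1/2$.

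There is no serious obstacle here; the only care needed is in justifying that the general-position hypothesis for the Poisson process holds almost surely (so that Lemma~\ref{lemma:DualityLemma} applies) and that the identification of $P^\bullet$ with a Voronoi percolation at parameter $1/2$ is exact in distribution, both of which are immediate from the setup earlier in the paper.
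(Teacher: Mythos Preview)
Your argument is correct and is precisely the intended one: the paper states this corollary immediately after Lemma~\ref{lemma:DualityLemma} without proof, relying on the same two observations you spell out---that at $d=2i$ the duality lemma forces $A\cup A^\bullet$ almost surely, and that self-duality at $p=1/2$ gives $\mathbb{P}_{1/2}(A)=\mathbb{P}_{1/2}(A^\bullet)$.
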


We also borrow two more technical results from \cite{duncan2025homological} that allow us to compare the probability of $S$ to $A$. We define the coarse state once again for easy reference.

\begin{figure}
    \centering
    \includegraphics[width=0.95\linewidth]{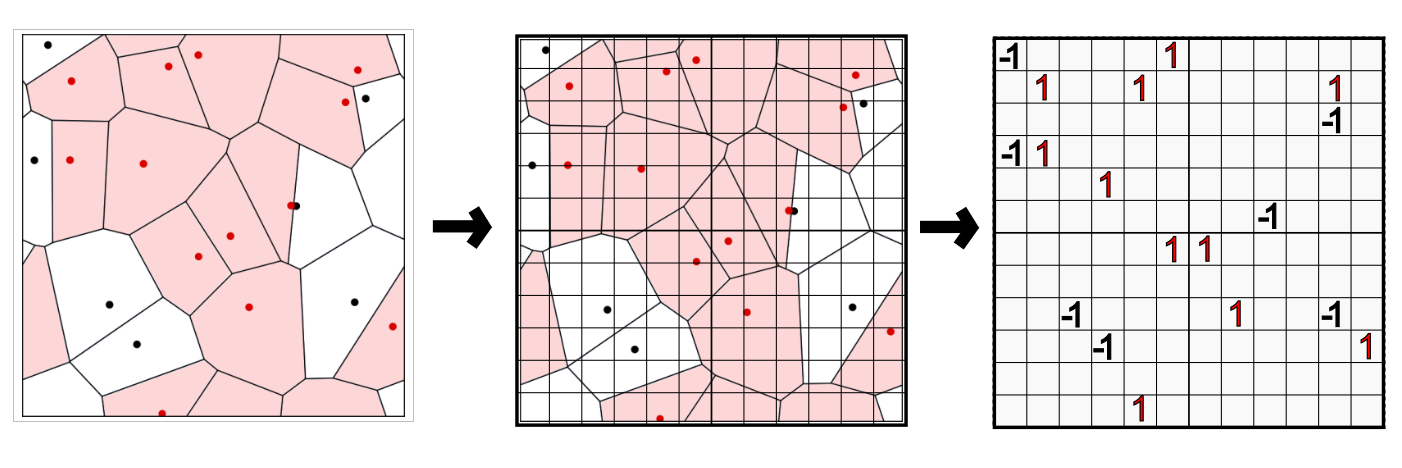}
    \caption{A depiction of a coarse state.}
    \label{fig:CoarsestateVisual}    
\end{figure}

\begin{definition}[Coarse state]\label{def:coarsestate}
    Let $\{\Lambda_{i}\}$ be a disjoint covering of $\mathbb{T}^d_N$ by cubes of length $\delta$. Let $(Z, R)$ be a Voronoi percolation. We define $\mathrm{Coarse}_{\delta}(\cdot) : (Z,R) \mapsto \{-1,0,1\}^{n}$ by, for each cube $\Lambda_i$,
    \[\Lambda_i \mapsto \begin{cases}
        -1 & \text{if $\Lambda_i$ contains a white point} \\
        0 & \text{if $\Lambda_i$ contains no points} \\
        1 & \text{if $\Lambda_i$ contains only red points.}
    \end{cases}\]
\end{definition}

A coarse state is shown in Figure~\ref{fig:CoarsestateVisual}. The first technical result involves an application of representation theory.

\begin{definition}[Irreducible representation]
    A vector space $V$ acted on by a group $G$ is an \textit{irreducible representation} of $G$ if the only subspaces of $V$ are invariant under $G$ or are trivial subspaces.
\end{definition}

\begin{lemma}[Duncan, Kahle, Schweinhart]
    Let $V$ be a finite dimensional vector space and $Y$ be a set. Let $\mathcal{A}$ be a lattice of subspaces of $V$. Suppose $f : \{-1,0,1\}^{|Y|} \to \mathcal{A}$ is an increasing function, i.e. if $A \subset B$ then $f(A) \subset f(B)$. Let $G$ be a finite group which acts on both $Y$ and $V$ whose action is compatible with $f$. That is, for each $g \in G$ and $D \in \mathcal{P}(Y)$ $g(f(D)) = f(gD)$. Let $X$ be a $\{-1,0,1\}^{|Y|}$-valued random variable with a $G$-invariant distribution that is positively associated, meaning that increasing events with respect to $X$ are non-negatively correlated. Then if $V$ is an irreducible representation of $G$, there are positive constants $C_0,C_1$ so that
    \[\mathbb{P}_p(f(X) = V) \geq C_0\mathbb{P}_p(f(X)\not=0)^{C_1},\]
    where $C_0$ only depends on $G$ and $C_1$ only depends on $\mathrm{dim} \, V$.
\end{lemma}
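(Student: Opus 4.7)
The plan is to combine three ingredients: $G$-equivariance of $f$ together with $G$-invariance of the law of $X$, irreducibility of $V$ as a $G$-representation, and the FKG inequality coming from positive association. The first observation is that for any subspace $W \in \mathcal{A}$ the event $\set{W \subset f(X)}$ is increasing in $X$ by monotonicity of $f$, while the compatibility $g(f(D)) = f(gD)$ combined with the $G$-invariant distribution of $X$ yields
\[
\mathbb{P}_p(W \subset f(X)) = \mathbb{P}_p(gW \subset f(gX)) = \mathbb{P}_p(gW \subset f(X))
\]
for every $g \in G$, so these containment probabilities are constant on $G$-orbits.

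Now I would fix any nonzero $W \in \mathcal{A}$. The subspace $\sum_{g \in G} gW \subseteq V$ is nonzero and $G$-invariant, hence equals $V$ by irreducibility; a greedy dimension argument then produces $g_1,\dots,g_k \in G$ with $k \leq \dim V$ and $g_1 W + \dots + g_k W = V$. This forces the set inclusion $\bigcap_{i=1}^{k} \set{g_i W \subset f(X)} \subseteq \set{f(X) = V}$, and applying FKG to these increasing events together with the $G$-symmetry gives
\[
\mathbb{P}_p(f(X) = V) \geq \prod_{i=1}^{k} \mathbb{P}_p(g_i W \subset f(X)) = \mathbb{P}_p(W \subset f(X))^{k} \geq \mathbb{P}_p(W \subset f(X))^{\dim V}.
\]

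The final step is to choose $W$ so that $\mathbb{P}_p(W \subset f(X))$ is comparable to $\mathbb{P}_p(f(X) \neq 0)$. Let $\mathcal{M}$ be the set of atoms (minimal nonzero elements) of $\mathcal{A}$. Since $f(X) \neq 0$ forces $f(X)$ to contain some element of $\mathcal{M}$, a union bound and pigeonhole produce $W^* \in \mathcal{M}$ with $\mathbb{P}_p(W^* \subset f(X)) \geq \mathbb{P}_p(f(X) \neq 0)/\abs{\mathcal{M}}$. Substituting into the previous display yields
\[
\mathbb{P}_p(f(X) = V) \geq \abs{\mathcal{M}}^{-\dim V}\, \mathbb{P}_p(f(X) \neq 0)^{\dim V},
\]
so one takes $C_1 = \dim V$ and $C_0 = \abs{\mathcal{M}}^{-\dim V}$.

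The main obstacle is arguing that $C_0$ depends only on $G$ and its representation $V$, rather than on the ambient lattice $\mathcal{A}$. In the intended application $\mathcal{A}$ is built from $G$-stable subspaces of $V$, so Maschke's theorem (applicable because of the characteristic hypothesis on $\mathbb{F}$) identifies the atoms of $\mathcal{A}$ with irreducible $G$-subrepresentations of $V$; then $\abs{\mathcal{M}}$ is controlled purely by the representation theory of $G$ acting on $V$, and the asserted separation of constants in the statement goes through.
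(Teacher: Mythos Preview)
The paper does not supply its own proof: it quotes the statement from Duncan--Kahle--Schweinhart and remarks that their argument carries over \emph{mutatis mutandis} to the $\{-1,0,1\}^{|Y|}$ domain. Your first two steps --- showing that $\mathbb{P}_p(W\subset f(X))$ is constant on $G$-orbits, using irreducibility to write $V=\sum_{i=1}^{k} g_iW$ with $k\le\dim V$, and then applying positive association to obtain $\mathbb{P}_p(f(X)=V)\ge\mathbb{P}_p(W\subset f(X))^{\dim V}$ for any nonzero $W$ --- are correct and are exactly the skeleton of the DKS argument.

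The gap is in your final pigeonhole. The constant $C_0=|\mathcal{M}|^{-\dim V}$ depends on the lattice $\mathcal{A}$, and nothing in the hypotheses bounds $|\mathcal{M}|$: if $\mathbb{F}$ is infinite and $\mathcal{A}$ is the full subspace lattice of $V$, then $\mathcal{M}$ consists of all lines through the origin and is infinite, so your bound is vacuous. Your Maschke repair does not work either. The atoms of $\mathcal{A}$ are nowhere assumed to be $G$-invariant subspaces, and in the paper's own application they are not: the elements of $\mathcal{A}$ are images $\phi_i(P)\subset H_i(\mathbb{T}^d_N)$ of inclusion-induced homology maps, which $G=W_d$ permutes but does not fix individually. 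Worse, if the atoms \emph{were} $G$-subrepresentations, irreducibility of $V$ would force every nonzero one to equal $V$, collapsing $\mathcal{A}$ to $\{0,V\}$ and trivializing the lemma. The correct route is to pigeonhole over $G$ rather than over $\mathcal{A}$: for instance, whenever $f(X)\neq 0$ irreducibility gives $\sum_{g\in G} g\,f(X)=V$, so for any fixed proper $W\subsetneq V$ some $g\in G$ satisfies $f(X)\not\subset g^{-1}W$, and a union bound over $G$ yields $\mathbb{P}_p(f(X)\not\subset W)\ge |G|^{-1}\mathbb{P}_p(f(X)\neq 0)$, a constant depending only on $|G|$. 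Feeding estimates of this type back into your FKG step gives the stated separation $C_0=C_0(G)$, $C_1=C_1(\dim V)$; see the DKS paper for the precise packaging.
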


We have modified the above lemma slightly, using $\{-1, 0, 1\}^{|Y|}$ to be the domain of $f$ instead of the power set $\mathcal{P}(Y)$. However, the proof is the same, \textit{mutatis mutandis}. 


We apply the above lemma to events defined in terms of ``stable" giant cycles. Let $\Omega'$ be the collection of all possible coarse states of Voronoi percolations on $\T^d_N.$ Note that  
$\Omega'$ is of the form $\{-1, 0, 1\}^{|Y|}.$ 

\begin{definition}[Homology of a coarse state]
    Let $CS\in \Omega'$ be a coarse state and $P_{CS} =\{P : \mathrm{Coarse}_\delta(P) = CS\}$ the collection of Voronoi percolations with the same coarse state. We define $f : \Omega' \to H_i(\mathbb{T}_N^d, \mathbb{F})$ by $f(CS) = \cap_{P \in P_{CS}} \phi_i(P)$. We call the set, $f(CS)$, the \textbf{stable giant $i$-cycles} of $CS$. 
\end{definition}

In order to apply the lemma, with $V = H_i(\mathbb{T}_N^d; \mathbb{F})$, $Y$ the cubes in the coarse state, $X$ the coarse state of a Voronoi percolation, $f$ as in the previous definition, and $G = W_d$ the point symmetry group of the torus. It remains to show that $V$ is indeed an irreducible representation of $G$. Luckily, this result is provided in \cite{duncan2025homological} as well.

\begin{proposition}
    Let $\mathbb{F}$ be a field, $d > 0$, and $1 \leq i \leq d - 1$. $H_i(\mathbb{T}_N^d)$ is an irreducible representation of $W_d$ if and only if $\mathrm{char}(\mathbb{F}) \not= 2$.
\end{proposition}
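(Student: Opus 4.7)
The plan is to identify $H_i(\mathbb{T}_N^d;\mathbb{F})$ with the $i$-th exterior power $\bigwedge^i \mathbb{F}^d$ as a $W_d$-representation. By the Künneth formula, $H_*(\mathbb{T}_N^d;\mathbb{F}) \cong \bigotimes_{k=1}^d H_*(S^1;\mathbb{F})$, and this identification intertwines the $W_d$-action on $\mathbb{T}_N^d$ (coordinate permutation by $S_d$ and reflection $x_j \mapsto -x_j$ by the generator $\tau_j$ of the $j$-th $\mathbb{Z}/2$ factor) with the signed permutation action on $\bigwedge^i \mathbb{F}^d$. On the natural basis $\{e_S : S \subseteq \{1,\dots,d\},\ |S| = i\}$, the symmetric group permutes basis vectors according to its action on $i$-subsets, and each $\tau_j$ acts diagonally with eigenvalue $-1$ if $j \in S$ and $+1$ otherwise.

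For the characteristic-$2$ case I would note that each $\tau_j$ acts trivially, so $W_d$ acts through the quotient $S_d$, and the vector $\sum_S e_S$ spans a nonzero proper $W_d$-invariant subspace; it is proper because $\binom{d}{i} \geq 2$ whenever $1 \leq i \leq d-1$. Hence $H_i$ is not irreducible.

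For the converse, assume $\mathrm{char}(\mathbb{F}) \neq 2$ and let $U$ be a nonzero $W_d$-invariant subspace of $H_i(\mathbb{T}_N^d;\mathbb{F})$. Since $2 \neq 0$ in $\mathbb{F}$, Maschke's theorem applied to the abelian subgroup $(\mathbb{Z}/2)^d$ forces $U$ to decompose into joint $\tau_j$-eigenspaces, and the characters $\chi_S$ of $(\mathbb{Z}/2)^d$ with $\chi_S(\tau_j) = -1 \iff j \in S$ are pairwise distinct because $-1 \neq 1$. Each joint eigenspace inside $H_i$ is the one-dimensional span of a unique $e_S$, so $U = \bigoplus_{S \in \mathcal{T}} \mathbb{F} e_S$ for some $\mathcal{T} \subseteq \binom{[d]}{i}$. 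The $S_d$-invariance of $U$ then forces $\mathcal{T}$ to be closed under the $S_d$-action on $i$-subsets, which is transitive, so $\mathcal{T}$ is empty or all of $\binom{[d]}{i}$ and thus $U$ is $0$ or $H_i(\mathbb{T}_N^d;\mathbb{F})$.

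The main potential obstacle will be setting up the $W_d$-equivariant identification $H_i(\mathbb{T}_N^d;\mathbb{F}) \cong \bigwedge^i \mathbb{F}^d$ cleanly, in particular pinning down the sign with which each reflection $\tau_j$ acts on $e_S$ (which, as in the standard orientation convention, yields one factor of $-1$ for each index $j\in S$). Once that is in place, the representation-theoretic core reduces to elementary character analysis over the abelian subgroup $(\mathbb{Z}/2)^d$ together with transitivity of $S_d$ on $i$-subsets.
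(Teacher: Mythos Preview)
The paper does not prove this proposition; it simply imports it from \cite{duncan2025homological} with the sentence ``Luckily, this result is provided in \cite{duncan2025homological} as well.'' There is therefore no in-paper proof to compare against.

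That said, your argument is correct and is the natural one. The K\"unneth identification $H_i(\mathbb{T}^d;\mathbb{F})\cong\bigwedge^i\mathbb{F}^d$ equivariantly for the signed-permutation action of $W_d=(\mathbb{Z}/2)^d\rtimes S_d$ is standard, and once in place the representation theory is elementary: in characteristic $\neq 2$ the idempotents $\tfrac12(1\pm\tau_j)$ exist, the joint $(\mathbb{Z}/2)^d$-eigenspaces are precisely the lines $\mathbb{F}e_S$, and $S_d$-transitivity on $i$-subsets forces any invariant subspace to be $0$ or everything. In characteristic $2$ the reflections act trivially and $\sum_S e_S$ spans a proper $S_d$-fixed line since $\binom{d}{i}\geq 2$ for $1\leq i\leq d-1$. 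The one point you flag as a potential obstacle---checking that $\tau_j$ acts on $e_S$ by $(-1)^{|\{j\}\cap S|}$---is indeed the only thing requiring care, but it follows directly from the fact that reflection in the $j$-th circle factor acts by $-1$ on $H_1$ of that factor and by the identity on $H_0$, which the K\"unneth map respects.
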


Define the events $f(X) = V$ and $f(X) \not= 0$ as $S_\mathrm{discrete}$ and $A_\mathrm{discrete}$ respectively. Thus, we obtain the following uniform bound on $\mathbb{P}_p(S_\mathrm{discrete})$.

\begin{corollary}{\label{corollary:SBound}}
    There are constants $C_0, C_1 > 0$ not depending on $N, i$ such that
    \[\mathbb{P}_p(S_{\mathrm{discrete}}) \geq C_0\mathbb{P}_p(A_{\mathrm{discrete}})^{C_1}.\]
\end{corollary}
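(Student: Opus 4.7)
The plan is to recognize this as a direct application of the Duncan--Kahle--Schweinhart lemma stated just above, combined with the irreducibility proposition, once we have identified the right choices of the ingredients $V$, $Y$, $\mathcal{A}$, $f$, $G$, and $X$. I would take
\[V = H_i(\mathbb{T}^d_N; \mathbb{F}),\quad Y = \{\Lambda_j\},\quad G = W_d,\]
with $W_d$ the hyperoctahedral group of signed permutations of the coordinates, and $X = \mathrm{Coarse}_\delta(Z,R)$ viewed as a $\{-1,0,1\}^{|Y|}$-valued random variable. The lattice $\mathcal{A}$ is the collection of linear subspaces of $V$, and $f$ is the map in Definition of stable giant cycles, $f(CS)=\bigcap_{P\in P_{CS}}\phi_i(P)$. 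The output of the lemma is precisely the inequality $\mathbb{P}_p(f(X)=V)\geq C_0\, \mathbb{P}_p(f(X)\ne 0)^{C_1}$, which is the statement of the corollary since $\{f(X)=V\}=S_{\mathrm{discrete}}$ and $\{f(X)\ne 0\}=A_{\mathrm{discrete}}$ by definition.

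The main work is checking the five hypotheses of the lemma. First, $f$ is increasing: moving a coordinate of $CS$ upward in $\{-1,0,1\}$ (from white to empty to red) shrinks the set $P_{CS}$ of compatible percolations only by removing percolations whose red region was \emph{smaller}, so the intersection of the images $\phi_i(P)$ can only grow. Second, $W_d$ acts on the cube partition $Y$ (the partition can be chosen compatibly with the symmetries of $\mathbb{T}^d_N$) and on $V$ in the standard way, and these actions are compatible with $f$ because $W_d$ acts by self-homeomorphisms of the torus, commuting with the formation of coarse states, with the inclusion-induced maps, and with intersection. Third, the Poisson process $Z$ and the Bernoulli coloring are jointly $W_d$-invariant, so $X$ has a $W_d$-invariant distribution. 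Fourth, positive association of $X$ follows from the Harris--FKG inequality: conditional on the locations of $Z$, the colors are independent Bernoulli, and the number of Poisson points in disjoint cubes are independent, so the three-valued coarse state is a function of independent coordinates, and positive association reduces to the standard FKG inequality on a product space. Fifth, the irreducibility of $V = H_i(\mathbb{T}^d_N;\mathbb{F})$ as a $W_d$-representation when $\mathrm{char}(\mathbb{F})\neq 2$ is precisely the proposition quoted from \cite{duncan2025homological}.

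With all five hypotheses verified, the lemma applies and delivers the stated inequality with constants $C_0$ depending only on $W_d$ and $C_1$ depending only on $\dim V$. The only subtlety that requires care is that $C_1$ ostensibly depends on $\dim V = \binom{d}{i}$, which depends on $i$ (and on $d=2i$), but the paper's convention fixes the ambient dimension, so only the $N$-dependence matters and $C_0, C_1$ can be chosen uniformly in $N$. I would flag this point explicitly in the write-up so that the phrase ``not depending on $N, i$'' is unambiguous: what is essential is independence of $N$, while the $i$-dependence is absorbed into a single constant once $d$ is fixed. The main potential obstacle is the verification of positive association, but as noted this reduces to FKG on a product space and presents no real difficulty.
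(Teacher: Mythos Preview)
Your approach is exactly the paper's: apply the Duncan--Kahle--Schweinhart lemma with $V=H_i(\mathbb{T}^d_N;\mathbb{F})$, $Y$ the set of cubes, $G=W_d$, $X$ the coarse state, and $f$ the stable-giant-cycle map, then invoke the irreducibility proposition for the final hypothesis. The paper itself does not spell out the hypothesis checks, so your verification is additional detail rather than a different route.

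One point needs fixing. Your monotonicity argument is incorrect: raising a coordinate of $CS$ does not ``shrink $P_{CS}$ by removing percolations whose red region was smaller,'' because the fibers $P_{CS}$ for distinct coarse states are pairwise \emph{disjoint}, so nothing is removed. A correct argument is as follows. Given $P_2=(Z_2,R_2)$ with coarse state $CS_2\geq CS_1$, construct $P_1\in P_{CS_1}$ by (i) deleting all red points from cubes where $CS_1=0$, $CS_2=1$; (ii) recoloring one red point white in cubes where $CS_1=-1$, $CS_2=1$; (iii) adding a white point in cubes where $CS_1=-1$, $CS_2=0$. Each operation shrinks the red region: for (ii) and (iii) this is immediate, and for (i) one checks that every point of $V(r',Z_1)$ for a surviving red $r'$ lies either in $V(r',Z_2)$ or in the cell $V(r,Z_2)$ of a deleted red point $r$, hence in $U(R_2,Z_2)$. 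Thus $\mathrm{Im}\,\phi_i^{P_1}\subset\mathrm{Im}\,\phi_i^{P_2}$, so $f(CS_1)\subset\phi_i(P_2)$ for every $P_2\in P_{CS_2}$, giving $f(CS_1)\subset f(CS_2)$. Your remaining checks (compatibility of the $W_d$-action, $W_d$-invariance of the law, FKG-type positive association) and your remark on the $i$-dependence of $C_1$ are fine.
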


\subsection{A sharp transition}
With Theorem~\ref{thm:main_technical} in hand, the proof is a straightforward modification of the arguments of~\cite{duncan2025homological} which substitutes Bollobás and Riordan's version of Theorem \ref{theorem:friedgutkalai} for that of~\cite{Friedgut1996EveryMG}.

Our goal now is to discretize our events into events over the coarse state so that we may apply Theorem \ref{theorem:friedgutkalai}. In the previous subsection, we did this in our ``homology of a coarse state" definition, where the coarse state homology is the common homology of all the percolations with that coarse state. 

\begin{definition}[Stable events and discrete events]
    Let $X$ be an event for Voronoi percolation. Define $X_{\mathrm{discrete}}\subset \Omega'$ to be the event that $X$ occurs for every Voronoi percolation with a given coarse state. Similarly, 
    $X_{\mathrm{stable}}$ occurs for $P=(Z,R)$ if $\mathrm{Coarse}_\delta(P) \in X_\mathrm{discrete}$.
 \end{definition}
The events $A_{\mathrm{discrete}}$ and $S_{\mathrm{discrete}}$ are equivalent to $A_{\mathrm{stable}}$ and $S_{\mathrm{stable}}$, but live on different probability spaces.

Aside from sharing the same name, stable events and stable points are related. For a fixed percolation, $\mathcal{P}$, if an event depends only on the topological features of $\mathcal{P}$ in $\mathbb{T}_N^d$ and depends on a set of $\delta^{1/6}$-good points in $\mathcal{P}$, then $X$ will still hold under any $\delta$-perturbation by Theorem \ref{theorem:perturbwrap}. Subsequently, we formally introduce an event that depends only on topological features.

\begin{definition}[Events invariant under homeomorphisms of the torus]   
    Let $\mathcal{P}$ be a percolation, $\varphi : \mathbb{T}^d \to \mathbb{T}^d$ a homeomorphism, and $\varphi(\mathcal{P})$ another percolation. We say an event $X$ is invariant under homeomorphisms of the torus if $\mathcal{P}\in X$ implies $\varphi\paren{\mathcal{P}}\in X.$
\end{definition}

We express the previous paragraph now compactly as a Corollary of Theorem \ref{theorem:perturbwrap}.

\begin{corollary}
    Let $X$ be an increasing event that is invariant under homeomorphisms of the torus. If there exists an $R' \subset R$ that is $\delta^{1/6}$-stable and $X$ occurs for $(Z,R')$, then $X_{\mathrm{stable}}$ occurs.
\end{corollary}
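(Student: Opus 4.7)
The plan is to deduce the corollary directly from Theorem~\ref{theorem:perturbwrap} together with the two structural hypotheses on $X$. Interpreting ``$\delta^{1/6}$-stable'' to mean $\delta^{1/6}$-bubble wrapped in the sense of Section~\ref{section:voronoitopology}, the target is to show that $X$ holds for every Voronoi percolation $(\tilde{Z}, \tilde{R})$ with $\mathrm{Coarse}_\delta(\tilde{Z}, \tilde{R}) = \mathrm{Coarse}_\delta(Z, R)$, since by definition this is what $X_{\mathrm{stable}}$ demands.

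First, I would build a cube-by-cube perturbation $\zeta : Z \to \tilde{Z}$. Because the coarse states coincide, each $\delta$-cube is either empty in both processes (state $0$) or nonempty in both (states $\pm 1$). On the high-probability event that every $\delta$-cube contains $O(1)$ points (as in Lemma~\ref{lemma:chernoff}), the per-cube point counts can be aligned and a bijection produced with $\|z-\zeta(z)\| \le \delta \sqrt{d}$. Since $\delta\sqrt{d} < \delta^{5/6} = (\delta^{1/6})^5$ for $N$ large, $\zeta$ qualifies as a $(\delta^{1/6})^5$-perturbation, exactly what Theorem~\ref{theorem:perturbwrap} requires.

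Next, the $\delta^{1/6}$-bubble wrapping of $R'$ lets Theorem~\ref{theorem:perturbwrap} promote $\zeta$ to a self-homeomorphism $\bar{\zeta}$ of $\mathbb{T}^d$ with $\bar{\zeta}(U(R', Z)) = U(\zeta(R'), \tilde{Z})$. Bubble wrapping also forces $\zeta(R') \subset \tilde{R}$: if some $r \in R'$ shared a $\delta$-cube with a white point $w$, then $(r, w)$ would be a $\delta^{1/6}$-close pair, rendering $w$ a $\delta^{1/6}$-unstable (hence $\delta^{1/6}$-bad) vertex lying in $\mathrm{Star}(R') \setminus R'$, which would contradict the hypothesis that this set is $\delta^{1/6}$-good. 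Consequently each $r \in R'$ sits in a state-$1$ cube, so its image $\zeta(r)$ lies in a cube of $\tilde{Z}$ consisting entirely of red points and therefore in $\tilde{R}$.

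Finally, homeomorphism invariance of $X$ applied along $\bar{\zeta}$ transfers $X$ from $(Z, R')$ to $(\tilde{Z}, \zeta(R'))$, and monotonicity together with $\zeta(R') \subset \tilde{R}$ upgrades this to $X$ holding for $(\tilde{Z}, \tilde{R})$; arbitrariness of $(\tilde{Z}, \tilde{R})$ then gives $X_{\mathrm{stable}}$. The main technical obstacle is realizing $\zeta$ as a genuine bijection when the $\delta$-cube point counts in $Z$ and $\tilde{Z}$ differ; I expect this to be handled either by conditioning on a high-probability count-matching event or by composing with a finer-scale allocation coupling in the spirit of Section~\ref{section:coupling}, so that the matching step always succeeds at no topological cost.
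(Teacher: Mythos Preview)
Your overall strategy---build a cube-wise $\delta\sqrt d$-perturbation, invoke Theorem~\ref{theorem:perturbwrap}, then use homeomorphism invariance and monotonicity---is exactly what the paper has in mind (it states the result as an immediate corollary of Theorem~\ref{theorem:perturbwrap} with no further detail). Your argument that $\zeta(R')\subset\tilde R$ via the close-pair observation is a nice touch that the paper does not spell out.

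However, the obstacle you flag at the end is genuine and is not resolved by either of your two suggestions. The definition of $X_{\mathrm{stable}}$ requires that $X$ hold for \emph{every} Voronoi percolation $(\tilde Z,\tilde R)$ with $\mathrm{Coarse}_\delta(\tilde Z,\tilde R)=\mathrm{Coarse}_\delta(Z,R)$, and the coarse state records only the label in $\{-1,0,1\}$ per cube, not the point count. So $(\tilde Z,\tilde R)$ may place, say, seventeen points in a cube where $Z$ has one, and there is simply no bijection $\zeta:Z\to\tilde Z$ at all. Conditioning on a ``high-probability count-matching event'' is therefore illegitimate here: $X_{\mathrm{stable}}$ is a universal statement over \emph{all} configurations with that coarse state, so you are not permitted to discard any of them, however improbable. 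The allocation-scheme coupling of Section~\ref{section:coupling} is equally unavailable, since by construction it only couples processes that already share the same per-cube counts---precisely the hypothesis you lack.

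What is needed is an intermediate reduction showing that the extra or missing points in $\tilde Z$ (relative to $Z$) can be absorbed without changing the ambient topology of the relevant Voronoi union. One route is to note that under the $\delta^{1/6}$-stability hypothesis every point of $\mathrm{Star}(R')$ sits alone in its $\delta$-cube, so the only count mismatches occur either in cubes far from $R'$ (where they do not affect $U(R',Z)$) or in cubes whose excess points in $\tilde Z$ form $\delta^{1/6}$-close pairs and hence lie in the interior of a bad cluster of $\tilde Z$; Corollary~\ref{cor:interiorDiscard} then lets you strip those out before matching. Some argument of this flavour is required; as written, both your proof and the paper's one-line justification leave this step open.
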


In this setting, we apply Theorem \ref{thm:main_technical} as follows.

\begin{lemma}
    Let $X$ be an increasing event that is invariant under homeomorphisms of the torus. Then,
    \[\mathbb{P}_p(X_{\mathrm{stable}}) \leq \mathbb{P}_p(X) \leq \mathbb{P}_{p + \epsilon}(X_{\mathrm{stable}}) + o(1)\,.\]
\end{lemma}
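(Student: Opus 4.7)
The first inequality is immediate from the definition: if $X_{\mathrm{stable}}$ occurs for a percolation $\mathcal{P}$, then $X$ holds for every percolation sharing the coarse state of $\mathcal{P}$, in particular for $\mathcal{P}$ itself. Thus $X_{\mathrm{stable}} \subset X$ as events, and the inequality $\mathbb{P}_p(X_{\mathrm{stable}}) \leq \mathbb{P}_p(X)$ follows by monotonicity of measure.

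For the second inequality, the plan is to invoke Theorem~\ref{thm:main_technical} to transfer occurrence of $X$ at level $p$ to occurrence of $X_{\mathrm{stable}}$ at level $p+\epsilon$. Concretely, choose the parameter $\epsilon_0$ in Theorem~\ref{thm:main_technical} so that the resulting $\delta_0 = N^{-\epsilon_0}$ satisfies $\delta_0 \leq \delta^{1/6}$, where $\delta$ is the scale used in the definition of the coarse state. The theorem then supplies a coupling of $(Z_1, R_1(p))$ with $(Z_2, R_2(p+\epsilon))$ and a $\delta_0$-good subset $W \subset R_2$ satisfying
\[U(R_1, Z_1) \subset U(W, Z_2) \subset U(R_2, Z_2)\]
on an event $\mathcal{E}$ of probability $1 - o(1)$.

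On the event $\mathcal{E}$, suppose $X$ holds for $(Z_1, R_1)$. Because $X$ is increasing and $U(R_1, Z_1) \subset U(W, Z_2)$, the event $X$ also holds for $(Z_2, W)$. Since $W \subset R_2$ is $\delta_0$-good (hence $\delta^{1/6}$-good) and $X$ is invariant under homeomorphisms of the torus, the preceding corollary, applied to $(Z_2, R_2)$ with $R' = W$, yields that $X_{\mathrm{stable}}$ occurs for $(Z_2, R_2)$. Integrating over the coupling,
\[\mathbb{P}_p(X) \leq \mathbb{P}\paren{\set{X \text{ holds for } (Z_1,R_1)} \cap \mathcal{E}} + \mathbb{P}(\mathcal{E}^c) \leq \mathbb{P}_{p+\epsilon}(X_{\mathrm{stable}}) + o(1),\]
which is the desired bound.

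The only real subtlety is bookkeeping of scales: the $\delta$ in the coarse state (and thus in the definition of $X_{\mathrm{stable}}$), the $\delta_0$ from the coupling theorem, and the $\delta^{1/6}$ threshold in the corollary must be chosen consistently. This is a matter of selecting $\epsilon_0$ appropriately and does not introduce any new difficulty beyond what has already been set up, so the entire argument is essentially a direct application of Theorem~\ref{thm:main_technical} combined with the monotonicity and homeomorphism-invariance of $X$.
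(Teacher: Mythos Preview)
Your proof follows essentially the same route as the paper's: the first inequality is the trivial containment $X_{\mathrm{stable}} \subset X$, and the second comes from applying Theorem~\ref{thm:main_technical} together with the preceding corollary to push $X$ at level $p$ to $X_{\mathrm{stable}}$ at level $p+\epsilon$.

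One small slip in your scale bookkeeping: you write $\delta_0 \leq \delta^{1/6}$ and then claim ``$\delta_0$-good (hence $\delta^{1/6}$-good)''. Since being $\delta$-good means invariance under all $\delta$-perturbations, a \emph{larger} parameter gives the stronger condition, so the implication goes the wrong way with your inequality. The paper simply takes $\delta_0 = \delta^{1/6}$, which avoids the issue entirely; alternatively, reversing your inequality to $\delta_0 \geq \delta^{1/6}$ would fix it. You flag this as ``just bookkeeping'' in your final paragraph, and indeed it is---the overall argument is correct and matches the paper's.
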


\begin{proof}
    The first inequality is trivial as $X_{\mathrm{stable}}$ occurs if $X$ occurs. The second inequality follows from Theorem \ref{thm:main_technical} (by using $\delta_0 =\delta^{1/6}$ in the theorem) and the previous Corollary. That is, if $X$ occurs, as $\zeta(R_1)$ is $\delta^{1/6}$-stable, then $X_{\mathrm{stable}}$ occurs for $(Z_2,R_2)$. 
\end{proof}

The existence of an $i$-dimensional giant cycle, the event $A$, is an increasing event that is invariant under homeomorphisms of the torus. So, the above lemma applies for $A$, which we state as a corollary below.

\begin{corollary}\label{cor:eventStabilization}
    \[\mathbb{P}_p(A_{\mathrm{stable}}) \leq \mathbb{P}_p(A) \leq \mathbb{P}_{p + \epsilon}(A_{\mathrm{stable}}) + o(1)\,.\]
\end{corollary}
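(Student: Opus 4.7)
The plan is to apply the preceding lemma directly, taking $X = A$; the substance of the proof reduces to verifying the two hypotheses that the lemma requires, namely that $A$ is an increasing event and that $A$ is invariant under homeomorphisms of $\T^d_N$. Once these are in hand, the two-sided inequality is immediate by specialization.

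For monotonicity, recall $A = \set{\mathrm{rank}\,\phi_i > 0}$ where $\phi_i^P : H_i(P;\mathbb{F}) \to H_i(\T^d_N;\mathbb{F})$ is the map on homology induced by the inclusion $P \hookrightarrow \T^d_N$ for $P = U(R,Z)$. If we increase the set of red points from $R$ to $R'$ with $R \subset R'$, then $P = U(R,Z) \subset U(R',Z) = P'$, and the inclusion $P \hookrightarrow \T^d_N$ factors as $P \hookrightarrow P' \hookrightarrow \T^d_N$. Functoriality of homology gives $\mathrm{Image}(\phi_i^P) \subset \mathrm{Image}(\phi_i^{P'})$, hence $\mathrm{rank}\,\phi_i^{P'} \geq \mathrm{rank}\,\phi_i^{P}$, and $A$ is preserved under enlarging $R$.

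For topological invariance, fix a homeomorphism $\varphi : \T^d_N \to \T^d_N$ and let $P = U(R,Z)$. Then $\varphi$ restricts to a homeomorphism $P \to \varphi(P)$, which fits into a commutative square with the two inclusions into the torus and the automorphism $\varphi_* : H_i(\T^d_N;\mathbb{F}) \to H_i(\T^d_N;\mathbb{F})$ induced by $\varphi$. Both vertical maps are isomorphisms, so $\mathrm{rank}\,\phi_i^{P} = \mathrm{rank}\,\phi_i^{\varphi(P)}$, and therefore the event $A$ is preserved by $\varphi$.

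With both hypotheses verified, the preceding lemma applied to $X = A$ yields the desired inequality. I do not anticipate any obstacle, since the content of the corollary is entirely absorbed into that lemma (and ultimately into Theorem \ref{thm:main_technical}); the verification above is essentially a pair of one-line observations from the functoriality of singular homology.
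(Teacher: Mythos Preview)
Your proposal is correct and follows exactly the paper's approach: the paper simply notes that $A$ is increasing and invariant under homeomorphisms of the torus, then applies the preceding lemma with $X=A$. You have supplied the (routine) justifications for these two hypotheses via functoriality of homology, which the paper leaves implicit, but the argument is otherwise identical.
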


We define a proper probability space on the set of coarse states.
\begin{equation}\label{eqn:pbad}
    \begin{cases}
    p_{bad} &=  1 - \exp(-\|B\|(1 - p)) \approx \|B\|(1 - p), \\
    p_{neutral} &= \exp(-\|B\|), \\
    p_{good} &= \mathrm{exp}(-\|B\|(1 - p))(1 - \exp(-\|B\|p)) \approx \|B\|p\,.
    \end{cases}
\end{equation}
Where $\|B\| = \delta^d$, the size of each cube. Then, as the group of translations on the cubes transitively acts on $\{-1,0,1\}^n$, we are ready to apply Theorem~\ref{theorem:friedgutkalai}.
\[\mathbb{P}_{p}(X_{\text{stable}}) = \mathbb{P}_{p_{bad}, p_{good}}(X_{\text{discrete}}).\]

\begin{lemma}
    If $X$ is an increasing event invariant under homeomorphisms of the torus so that $\mathbb{P}_p(X_{\mathrm{stable}}) > \eta$ for some $\eta > 0$, then $\mathbb{P}_{q}(X_{\mathrm{stable}}) \to 1$ for $q > p$.
\end{lemma}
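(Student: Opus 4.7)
The plan is to apply the sharp-threshold theorem of Friedgut--Kalai in the form given by Bollob\'as--Riordan (Theorem~\ref{theorem:friedgutkalai}) to the discretized event $X_{\mathrm{discrete}}$ on the coarse-state space. My first task is to identify the law of the coarse state: by the Poisson property of $Z$ across the disjoint cubes $\set{\Lambda_i}$ and the independent coloring of its points, the coarse state is distributed as a product measure on $\set{-1, 0, 1}^n$ with $n = \ceil{\paren{N/\delta}^d}$ and marginals $\paren{p_{bad}\paren{p}, p_{neutral}, p_{good}\paren{p}}$ from~\eqref{eqn:pbad}. In particular, $\mathbb{P}_p\paren{X_{\mathrm{stable}}} = \mathbb{P}^n_{p_{bad}\paren{p},\,p_{good}\paren{p}}\paren{X_{\mathrm{discrete}}}$. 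The transitive invariance required by Theorem~\ref{theorem:friedgutkalai} is supplied by the subgroup of torus translations permuting the cube partition.

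The second step is to verify that $X_{\mathrm{discrete}}$ is increasing in the sense needed by Theorem~\ref{theorem:friedgutkalai}. Since $p_{neutral}$ does not depend on $p$, the monotone coupling between $\mathbb{P}_p$ and $\mathbb{P}_{p'}$ (for $p < p'$) performs only transitions $-1\mapsto 1$ at the cube level, so it suffices to check preservation of $X_{\mathrm{discrete}}$ under these. If $CS_2$ is obtained from $CS_1\in X_{\mathrm{discrete}}$ by switching some entries from $-1$ to $1$, then any realization $P_2$ of $CS_2$ can be paired with a realization $P_1$ of $CS_1$ by recoloring at least one red point back to white in each switched cube; this produces $Z_1 = Z_2$ and $R_1 \subset R_2$. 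Since $X$ is increasing in the red set at fixed vertex locations, $X\paren{P_1}$ implies $X\paren{P_2}$, so $CS_2 \in X_{\mathrm{discrete}}$.

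Now I can apply Theorem~\ref{theorem:friedgutkalai}. For an intermediate $q' \in \paren{p, q}$, set $p_1 = p_{bad}\paren{p},\, p_2 = p_{good}\paren{p},\, q_1 = p_{bad}\paren{q'},\, q_2 = p_{good}\paren{q'}$; both~\eqref{equation:fkinequality1} and the bound $p_{\max} = O\paren{\|B\|} = O\paren{N^{-d\epsilon}} < 1/e$ hold for $N$ large. Using $\log n \sim d\paren{1+\epsilon}\log N$ and $\log\paren{1/p_{\max}} \sim d\epsilon\log N$, I get
\[\frac{p_{\max}\log\paren{1/p_{\max}}}{\log n} = \Theta\!\paren{\|B\| \cdot \frac{\epsilon}{1+\epsilon}},\]
while first-order expansions in~\eqref{eqn:pbad} yield $\min\set{q_2 - p_2,\, p_1 - q_1} = \Theta\!\paren{\|B\|\paren{q'-p}}$. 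Thus~\eqref{equation:fkinequality2} reduces to $q' - p \geq c\log\paren{1/\eta}\,\epsilon/\paren{1+\epsilon}$ for an absolute constant $c$, which can be arranged by choosing $\epsilon$ small compared to $q - p$; this gives $\mathbb{P}_{q'}\paren{X_{\mathrm{stable}}} > 1 - \eta$ for $N$ large.

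The main obstacle is upgrading this single-shot bound $> 1 - \eta$ to the stronger $\mathbb{P}_q\paren{X_{\mathrm{stable}}} \to 1$. My plan is to iterate: choose a finite sequence $p = p_0 < p_1 < \cdots < p_K \leq q$ and targets $\eta > \eta_1 > \cdots > \eta_K$ decreasing to $0$, applying Theorem~\ref{theorem:friedgutkalai} at step $k$ with premise $\mathbb{P}_{p_k}\paren{X_{\mathrm{stable}}} > 1 - \eta_{k-1} > \eta_k$ to obtain $\mathbb{P}_{p_{k+1}}\paren{X_{\mathrm{stable}}} > 1 - \eta_k$. Since each gap is $O\paren{\log\paren{1/\eta_k}\,\epsilon/\paren{1+\epsilon}}$, the discretization scale $\delta = N^{-\epsilon}$ must be taken with $\epsilon$ small enough (possibly tending to $0$ with $N$) that the cumulative gap fits within $q - p$ while $\eta_K \to 0$; monotonicity of $\mathbb{P}_\cdot\paren{X_{\mathrm{stable}}}$ in $p$ then yields $\mathbb{P}_q\paren{X_{\mathrm{stable}}} \geq \mathbb{P}_{p_K}\paren{X_{\mathrm{stable}}} \to 1$.
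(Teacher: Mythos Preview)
Your argument coincides with the paper's through the single application of Theorem~\ref{theorem:friedgutkalai}: identify the coarse-state law as a product measure on $\{-1,0,1\}^n$, invoke translational symmetry, verify the gap inequality~\eqref{equation:fkinequality2} via the asymptotics of $p_{\max}$ and $\log n$, and obtain $\mathbb{P}_{q'}(X_{\mathrm{stable}}) > 1 - \eta$. Your explicit monotonicity verification for $X_{\mathrm{discrete}}$ is a welcome detail the paper skips.

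The divergence is in upgrading $> 1 - \eta$ to $\to 1$. The paper does not iterate: it simply lets the $\eta$ in Theorem~\ref{theorem:friedgutkalai} be any value below $\mathbb{P}_p(X_{\mathrm{discrete}})$ (hence arbitrarily small), and for each such $\eta$ chooses the discretization exponent $\epsilon$ small enough that~\eqref{equation:fkinequality2} holds---the factor $\log(1/\eta)$ is absorbed into the constant $C_1$, so the requirement becomes $C_1(\eta)\,\epsilon + C_2/\log N < q - p$, which is met by taking $\epsilon$ small and $N$ large. Your iteration with a sequence $\eta_k \to 0$ cannot succeed at a \emph{fixed} $\epsilon$: step $k$ needs a gap of order $\log(1/\eta_k)\cdot\epsilon/(1+\epsilon)$, so as $\eta_k \to 0$ the cumulative gap diverges rather than fitting inside $q-p$. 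The parenthetical remedy you suggest---letting $\epsilon$ tend to $0$ with $N$---is essentially the paper's maneuver in disguise, but framed as a multi-step iteration it is both more elaborate than needed and awkward (since $X_{\mathrm{stable}}$ itself depends on $\epsilon$, a moving $\epsilon$ changes the event under discussion). The cleaner route is the paper's: a single application with $\epsilon$ chosen against the desired accuracy $\eta$.
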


\begin{proof}
By construction
    \[0 < \mathbb{P}_p(X_{\mathrm{stable}}) = \mathbb{P}_p(X_{\mathrm{discrete}}).\]
    Let $0 < \eta < \mathbb{P}_p(X_{\mathrm{discrete}})$ and $q > p$. Define $q_{bad}, q_{good}$ by substituting $q$ for $p$ in Equation~\ref{eqn:pbad}.  As a reminder, $n$ is the number of cubes in the coarse state and $N$ is the length of the torus.

    We proceed by establishing the inequalities of Theorem \ref{theorem:friedgutkalai}. Throughout this proof, recall that $\delta = N^{-\epsilon}$ and $\|B\| = \nu_d \delta^{d}$
    Firstly, we can demonstrate inequality~\ref{equation:fkinequality1} --- $0 < q_{bad} < p_{bad} < 1/e$ and $0 < p_{good} < q_{good} < 1/e$ --- can quickly be observed as
    \begin{align*}
        &0 <  \|B\|(1-q) < \|B\|(1 - p) < 1/e \\
        &0 <  \|B\|p < \|B\|q < 1/e.
    \end{align*}
    To show inequality \ref{equation:fkinequality2}, observe that $q_{good} - p_{good}, p_{bad} - q_{good} \sim (q - p)\|B\|$ and $p_{\max} = \max\{\|B\|(1-p), \|B\|q\}$. Moreover, using $n = (\lceil \frac{N}{\delta} \rceil)^d$, the right-hand-side of Equation~\ref{equation:fkinequality2} which we denote $\Delta$ has the following upper-bound.
    \new{\begin{align*}
        &\Delta:=c\log{(1/\eta)}p_{\max} \log(1/p_{\max})/ \log{n} \\
        &= (C_1) \cdot p_{\max} \cdot (-\log(p_{\max}))  / \log((N/\delta)^d) \\
        &\leq \frac{C_2}{(d-\epsilon)\log{N}}.
    \end{align*}   
    for some constants $C_1, C_2 > 0$. We used that $|p_{\max} (-\log{p_{\max}})| \leq 1$ for $p_{\max} \leq 1.$ As $\min\{q_{good} - p_{good}, p_{bad} - q_{bad}\} \sim (q-p)\|B\|$, $N$ large enough so that $\Delta < q-p$ yields inequality \ref{equation:fkinequality2} of Theorem \ref{theorem:friedgutkalai}}. Therefore, $\mathbb{P}_{q_{good}, q_{bad}}(X_{\mathrm{discrete}}) > 1 - \eta$. It follows that $\mathbb{P}_q(X_{\mathrm{stable}}) = \mathbb{P}_{q_{good}, q_{bad}}(X_{\mathrm{discrete}}) \xrightarrow[]{N \to \infty} 1$.
\end{proof}

\begin{corollary}
    If $X$ is an increasing event invariant under homeomorphisms of the torus and $\mathbb{P}_p(X_{\mathrm{stable}}) > 0$, then $\mathbb{P}_q(X) \to 1$ for $q > p$. 
\end{corollary}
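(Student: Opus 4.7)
The plan is to deduce this corollary directly from the preceding lemma together with the elementary containment $X_{\mathrm{stable}} \subset X$. First, I would interpret the hypothesis $\mathbb{P}_p(X_{\mathrm{stable}}) > 0$ in its natural asymptotic sense, namely that there exists $\eta > 0$ with $\mathbb{P}_p(X_{\mathrm{stable}}) > \eta$ for all sufficiently large $N$ (or at least along a subsequence $N_k \to \infty$). This is precisely the hypothesis of the previous lemma, which then yields $\mathbb{P}_q(X_{\mathrm{stable}}) \to 1$ for every $q > p$.

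Next, I would invoke the definition of $X_{\mathrm{stable}}$: by construction, if the coarse state of a percolation lies in $X_{\mathrm{discrete}}$ then $X$ holds for every percolation sharing that coarse state, and in particular for the realized one. Hence $X_{\mathrm{stable}} \subset X$ as events, so
\[\mathbb{P}_q(X) \;\geq\; \mathbb{P}_q(X_{\mathrm{stable}}) \;\longrightarrow\; 1\,,\]
which is the desired conclusion.

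There is no genuine obstacle here; the substantive work has already been absorbed into the preceding lemma via the Friedgut--Kalai sharp threshold (Theorem \ref{theorem:friedgutkalai}) and into Theorem \ref{thm:main_technical} via the coupling. This corollary is simply the clean restatement of that content in terms of the original event $X$ rather than its stabilized version $X_{\mathrm{stable}}$, which is the form needed for the applications to $A$ (and, via Corollary \ref{corollary:SBound} together with Lemma \ref{lemma:DualityLemma}, to $S$) in Theorem \ref{thm:homologicaltransition}. In that application, one starts from the lower bound $\mathbb{P}_{1/2}(A) \geq 1/2$ of Corollary \ref{cor:AtOneHalf}, uses Corollary \ref{cor:eventStabilization} to transfer this into a positive lower bound on $\mathbb{P}_{p}(A_{\mathrm{stable}})$ for $p$ slightly above $1/2$, and then invokes the present corollary to conclude $\mathbb{P}_q(A) \to 1$ for any $q > 1/2$.
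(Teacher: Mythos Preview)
Your proposal is correct and matches the paper's intended argument: the corollary is stated without proof precisely because it is an immediate consequence of the preceding lemma together with the containment $X_{\mathrm{stable}} \subset X$. Your interpretation of the hypothesis and the additional context about how the corollary feeds into the proof of Theorem~\ref{thm:homologicaltransition} are both accurate.
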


\begin{corollary}
    For $p > 1/2$,
    \[\mathbb{P}_p(A) \to 1.\]
    Moreover, by Corollary \ref{corollary:SBound}, $\mathbb{P}_p(S)$ is uniformly bounded away from $0$.
    \[\mathbb{P}_{p}(S) \to 1.\]
\end{corollary}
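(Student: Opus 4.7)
The plan is to chain together the already-established pieces: Corollary~\ref{cor:AtOneHalf} (which gives $\mathbb{P}_{1/2}(A) \geq 1/2$), the coupling inequality of Corollary~\ref{cor:eventStabilization}, the sharp-threshold corollary proved immediately above, and the representation-theoretic bound of Corollary~\ref{corollary:SBound}. Both $A$ and $S$ are increasing events invariant under homeomorphisms of the torus, so both are amenable to the sharp-threshold machinery, provided we can first obtain a uniform-in-$N$ lower bound on their stable versions just above criticality.

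Fix $p > 1/2$ and choose an intermediate value $p'$ with $1/2 < p' < p$. Applying Corollary~\ref{cor:eventStabilization} with underlying parameter $1/2$ and increment $\epsilon = p' - 1/2 > 0$ yields
\[\mathbb{P}_{p'}(A_{\mathrm{stable}}) \geq \mathbb{P}_{1/2}(A) - o(1) \geq \tfrac{1}{2} - o(1),\]
where the second inequality uses Corollary~\ref{cor:AtOneHalf}. Hence for all sufficiently large $N$ we have $\mathbb{P}_{p'}(A_{\mathrm{stable}}) > 1/3$, a lower bound independent of $N$. The sharp-threshold corollary then gives $\mathbb{P}_{p}(A_{\mathrm{stable}}) \to 1$ as $N \to \infty$, and since $A_{\mathrm{stable}} \subset A$ we conclude $\mathbb{P}_p(A) \to 1$, which is the first assertion.

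For the second assertion, I would note that $A_{\mathrm{stable}}$ and $A_{\mathrm{discrete}}$ coincide as events on the coarse state, so the previous paragraph shows $\mathbb{P}_{p'}(A_{\mathrm{discrete}})$ is bounded away from $0$ uniformly in $N$. Corollary~\ref{corollary:SBound} then gives
\[\mathbb{P}_{p'}(S_{\mathrm{stable}}) = \mathbb{P}_{p'}(S_{\mathrm{discrete}}) \geq C_{0}\, \mathbb{P}_{p'}(A_{\mathrm{discrete}})^{C_{1}},\]
which is also uniformly bounded below by a positive constant. Since $S$ is itself increasing (surjectivity of $\phi_i$ is preserved by enlarging the percolation, by factoring the inclusion-induced map on homology through an intermediate inclusion) and invariant under homeomorphisms of $\mathbb{T}^d$, a second application of the sharp-threshold corollary delivers $\mathbb{P}_p(S) \to 1$.

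I do not anticipate a genuine obstacle; this is essentially a bookkeeping argument. The two points worth double-checking are that $S$ is indeed monotone (immediate from the factorization above) and that the $\eta > 0$ in the sharp-threshold lemma is read as a constant independent of $N$, which is the sense in which it is invoked via the Friedgut--Kalai theorem. Both are routine.
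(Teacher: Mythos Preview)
Your proposal is correct and follows essentially the same route as the paper: bootstrap from $\mathbb{P}_{1/2}(A)\geq 1/2$ through Corollary~\ref{cor:eventStabilization} to get $\mathbb{P}_{p'}(A_{\mathrm{stable}})$ bounded away from zero, apply the sharp-threshold corollary to obtain $\mathbb{P}_p(A)\to 1$, then use Corollary~\ref{corollary:SBound} on the discrete events to bound $\mathbb{P}_{p'}(S_{\mathrm{stable}})$ away from zero and apply the sharp-threshold corollary once more. Your explicit choice of an intermediate $p'$ and your careful tracking of the discrete/stable/continuous distinctions make the argument cleaner than the paper's terse version, but the substance is the same.
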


When $S$ occurs, the giant cycles block the giant cycles of the dual. That is, by Lemma \ref{lemma:DualityLemma}, we infer the following.

\begin{corollary}
    For $p < 1/2$,
    \[\mathbb{P}_{p}(A) \rightarrow 0\]
\end{corollary}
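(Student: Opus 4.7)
The plan is to reduce to the preceding corollary by duality. When $p < 1/2$, the dual percolation $P^{\bullet} = (Z, Z\setminus R)$ is a Voronoi percolation at parameter $1-p > 1/2$. Letting $S^{\bullet}$ denote the event $S$ applied to $P^{\bullet}$, the preceding corollary applied in the dual regime gives
\[\mathbb{P}_p(S^{\bullet}) = \mathbb{P}_{1-p}(S) \longrightarrow 1 \qquad \text{as } N\to\infty.\]

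Next, I invoke Lemma~\ref{lemma:DualityLemma} in the self-dual dimension $d = 2i$. Since $d - i = i$, the lemma reads $\mathrm{rank}\,\phi_i + \mathrm{rank}\,\psi_i = \mathrm{rank}\,H_i(\mathbb{T}^{2i})$. On the event $S^{\bullet}$, $\mathrm{rank}\,\psi_i$ achieves the full rank $\binom{2i}{i}$, which forces $\mathrm{rank}\,\phi_i = 0$; equivalently, $A$ fails in the primal. The final line just observes that incompatibility implies
\[\mathbb{P}_p(A) \leq \mathbb{P}_p(\neg S^{\bullet}) = 1 - \mathbb{P}_{1-p}(S) \longrightarrow 0.\]

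There is essentially no obstacle here beyond bookkeeping: the infrastructure (duality lemma plus sharp transition for $S$ above $1/2$) is already assembled, so the proof is a three-line concatenation. The only thing worth double-checking is that the dual of a Voronoi percolation at parameter $p$ is genuinely a Voronoi percolation at parameter $1-p$ on the same underlying Poisson process $Z$, which is immediate from the independent Bernoulli coloring.
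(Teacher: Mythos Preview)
Your proof is correct and follows exactly the approach the paper intends: the paper's entire argument for this corollary is the one-line remark immediately preceding it, namely that by Lemma~\ref{lemma:DualityLemma}, $S$ in the dual (which holds with probability $\to 1$ since $1-p>1/2$) forces $A$ to fail in the primal. You have simply unpacked that remark carefully, including the identification $d-i=i$ and the rank bookkeeping.
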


\section{Acknowledgments}
We would like to thank Paul Duncan, Ron Peled, and Anthony Pizzimenti for feedback and helpful comments.




\bibliographystyle{plain} 
\bibliography{refs} 

\end{document}